\documentclass[letterpaper,11pt]{article}

\usepackage{amsmath, amsthm,amssymb,bbm, bm,multirow,graphicx}
\usepackage{paralist}
\usepackage{mathpazo}
\usepackage{hyperref}
\usepackage{verbatim}
\usepackage{mathrsfs}
\usepackage{authblk}
\usepackage{subcaption}
\usepackage{longtable}
\usepackage{appendix}
\hypersetup{colorlinks=true}
\usepackage[top=1in, bottom=1in, left=0.8in, right=0.8in]{geometry}
\usepackage{filecontents}
\usepackage{setspace}
\usepackage{mathtools}

\newcommand{\RNum}[1]{\uppercase\expandafter{\romannumeral #1\relax}}
\theoremstyle{definition}
\newtheorem{assumption}{Assumption}[part]

\newtheorem{theorem}{Theorem}[section]
\newtheorem{lemma}[theorem]{Lemma}
\newtheorem{proposition}[theorem]{Proposition}
\newtheorem{corollary}[theorem]{Corollary}
\newtheorem{definition}[theorem]{Definition}
\newtheorem{remark}[theorem]{Remark}

\numberwithin{equation}{section}

\def\nus{\nu^*}
\def\nusl{\nu_{\ell}^*}
\def\nuslp{\nu_{\ell+1}^*}

\def\ss{s^*}
\def\ssl{s^*_\ell}

\def\sslm{s^*_{\ell-1}}
\def\vs{r}
\def\vsl{r_\ell}

\def\vslm{r_{\ell-1}}

\def\ind{\mathbb{I}}

\def\M{K}
\setcounter{tocdepth}{1}

\def\R{{\mathbb R}}
\def\N{{\mathbb N}}
\def\Z{{\mathbb Z}}

\def\C{{\mathbb C}}
\def\Z{{\mathbb Z}}

\def\f1{\mathbf{1}}

\newcommand{\twopartdef}[4]{	\left\{		\begin{array}{ll}	#1 & \mbox{if } #2 \\[3mm]	#3 & \mbox{if } #4	 \end{array}	\right.}

\def\xn{X^{(N)}}

\def\j0{-\xn(0)+1}
\def\ki0{-k_{0,i}^{(N)}}

\def\s{\mathbb{S}}

\newcommand{\hc}[0] {[0,\infty)}

\begin{document}
\title{Invariant states of  hydrodynamic limits of randomized load balancing networks}

\author{Pooja Agarwal and Kavita Ramanan}
\affil{Division of Applied Mathematics, Brown University}

\maketitle
\appendixtitleon
\appendixtitletocon

\begin{abstract}  
  Randomized load-balancing algorithms play an important role in improving performance in large-scale 
  networks  at relatively low computational cost.  A common model of such a system is a network of $N$
  parallel queues in which incoming jobs with independent and identically distributed service
  times are routed on arrival using the join-the-shortest-of-$d$-queues routing algorithm.
  Under fairly general conditions, 
  it was shown by Aghajani and Ramanan  that as $N\rightarrow\infty$,
  the state dynamics converges to the unique solution of  a countable system of
  coupled deterministic measure-valued equations called the hydrodynamic equations.  In this article,
  a characterization of  invariant states  of these hydrodynamic equations is obtained and, when   $d=2$, used to construct
  a numerical algorithm to compute the queue length distribution and  mean virtual waiting time in the invariant state.
  Additionally, it is also shown that under a suitable tail condition  on the service distribution, the  queue length distribution of the invariant state
  exhibits a doubly exponential tail decay, 
  thus demonstrating a vast improvement in performance over the case $d=1$, which corresponds to random routing, when the tail decay could even be
  polynomial.  Furthermore, numerical evidence is provided to support the conjecture that the invariant state is the limit of the
  steady-state distributions of the $N$-server models.
  The proof methodology, which  entails analysis of  a coupled system of measure-valued equations, can potentially
  be applied to other many-server systems with general service distributions, where  measure-valued
  representations are useful. 
  \end{abstract}

\noindent
    {\em Key Words. } load balancing;  power of two choices; stochastic network; many-server queue; fluid limit; hydrodynamic limit; measure-valued processes;  randomized algorithms; invariant state; equilibrium distribution; cloud computing

    \section{Introduction}

    \subsection{Background and Motivation} 
    \label{sec-background}
Randomized load balancing is an effective method that is  used to improve performance
in large scale networks  while incurring relatively low communication overhead and  computation costs.  The  model considered here consists of  a
parallel network  of $N$ servers, to which jobs with independent and identically distributed (i.i.d.) service times 
arrive  according to a renewal process with rate $\lambda N$.  Upon arrival of a job, $d$ out of the $N$ servers are chosen independently and uniformly at random,
and the job is routed to the server with the shortest queue, with ties broken uniformly at random. Each server processes jobs from its queue in a first-come first-serve (FCFS) manner and jobs leave the system on completion of service. A server never idles when there is a job in its queue. The arrival process and service times are assumed to be mutually independent, and service times of jobs have finite mean which, without loss of generality, is taken to be one. The  system described above  will be referred to as the $SQ(d)$ model.

This model was first introduced and analyzed in the case of  Poisson arrivals and exponential service distributions by Vvedenskaya et al. \cite{VveDobKar96} and Mitzenmacher \cite{Mit01}.  They introduced a countable state representation of the process consisting of the fraction of queues with
length greater than $\ell$ for each integer $\ell \geq 0$ and showed that as $N \rightarrow \infty$, the state converges to a fluid limit, characterized as the unique solution to a countable system of coupled ordinary differential equations (ODEs). 
Under the stability condition $\lambda < 1$,  for $d \geq 2$  they also obtained an analytical expression for the 
 the unique invariant state of the fluid limit,  and showed that it exhibits a  doubly exponential decay.
 It was also shown in  \cite{VveDobKar96} that the stationary distributions of the $N$-server systems converge to this
 invariant state, in the limit as $N \rightarrow \infty$. 
On the other hand,  when $d = 1$,  the stationary queue length distribution is the same as that for an M/M/1 queue,
which is well known to exhibit just an exponential tail decay. 
Thus, the works \cite{VveDobKar96, Mit01}  
uncovered the remarkable property that  a dramatic improvement in performance can be achieved by introducing just a little
bit of randomness into the system (i.e., even when $d = 2$), a  phenomenon that has been dubbed ``the power of two choices.''
A variant of this model that has received much attention recently is when the $d = d_N$ grows with $N$, but
in this article we focus on the case when $d$ is fixed and does not grow with $N$.   This involves a very different analysis, entailing a comparison with a system subject to join the shortest queue (JSQ) routing.  In this article we focus on the case when $d$ is fixed and does not grow with $N$, which is particularly relevant in many models where the cost of polling multiple servers is very high.

 Until recently, most work on this model and its variants had focused on  exponential service times. 
An extension to phase-type service distributions was first considered by Li and Lui \cite{LiLui10}, who
analyzed the invariant states of a formal fluid limit.
  More general service distributions were then considered in a   series of works by  Bramson, Lu and Prabhakar
  \cite{Bra11,BraLuPra10,BraLuPra12,BraLuPra13}.   Under very broad
  assumptions  and more general routing
  schemes, it was first shown in \cite{Bra11}   that  under the subcriticality condition $\lambda < 1$,
  each $N$-server system is ergodic with a unique stationary distribution. 
  The works \cite{BraLuPra10,BraLuPra12,BraLuPra13} specialize to the
  SQ($d$) model with 
 Poisson arrivals and service distributions with
  decreasing hazard rate, and   directly establish convergence of  the $N$-server steady-state distributions without first establishing a fluid limit.  
    In particular,  when
  $\lambda < 1$, it is shown in  \cite[Theorem 2.2]{BraLuPra12} that the $N$-server stationary queue length distribution converges to the unique solution
     of a fixed point equation, and this  equation is analyzed to show that  for power law distributions with exponent $-\alpha$, the limiting stationary distribution
     has a  doubly exponential tail if $\alpha > d/(d-1)$ and a power law tail if $\alpha < {d}/{(d - 1)}$.
       However, many realistic service distributions are neither exponential nor phase-type,   and may not have a decreasing hazard rate.   For example, statistical analyses suggest
  that service times follow a log-normal distribution in \cite{BroLaw05}, 
  a Gamma distribution in Automatic Teller Machines \cite{Kol84},
  or a shifted exponential distribution in \cite{chen14,liang14}. 
     While an analogous result iis conjectured to hold for a larger class of service diistributions, this precise class
     has not been identified and, moreover, 
     according to the authors of \cite{BraLuPra12}, it would be challenging to extend their
   approach beyond  service distributions with decreasing hazard rate (see \cite[Paragraph 9, Section 1]{BraLuPra12}).
   
  This motivates taking a different  approach to analyzing the equilibrium behavior of randomized load balancing with general
  service distributions than that adopted in  \cite{BraLuPra10,BraLuPra12,BraLuPra13}. 
  In this article, we consider the approach of first establishing a fluid limit that characterizes
  the limiting dynamics of the $N$-server system, as $N \rightarrow \infty$, and then showing that the
  $N$-server stationary distributions converge to the unique invariant state of the fluid limit.
  This approach has proved fruitful for many network models that admit simple Markovian representations,
  including (as mentioned above), the SQ($d$) model with
  exponential service distributions (see \cite{VveDobKar96}).  
  However, in the case of general service distributions, each of the steps are significantly more challenging due to
  the fact that the dynamics are more compliciated, and there is no common finite or countable-dimensional Markovian
  representation for all $N$-server systems.
However, for a broad class of service distributions,
  recent work of  \cite{AghRam17}  used a convenient state representation in terms of a coupled system of stochastic 
  measure-valued processes and showed that the dynamics converges to the unique solution of a countable
  system of deterministic measure-valued equations, which we referred to as the hydrodynamic equations,
  in accordance with the parlance of interacting particle systems.
 Moreover, 
the works \cite{AghRam17,AghLiRam15,AghLiRam15b} which (under additional conditions on the service distributions) provide a 
  reformulation of the hydrodynamic equations in terms of a countable system of coupled partial differential equations.
  While these papers focused on transient behavior of this network, in our work study equilibrium properties of the
  hydrodynamic limit.  Our first main result,  Theorem \ref{th-reduction}, provides a convenient characterization of
  the invariant state of the hydrodynamic equations in terms of  fixed points of  certain maps (see Remark \ref{rem-s-fixed}).
  The methodology, which  entails analysis of  a system of coupled measure-valued equations, can potentially
  be applied to other many-server systems with general service distributions for which measure-valued representations are 
  useful. 
  This characterization is sufficiently tractable  to construct a numerical algorithm to compute the queue length distribution and virtual
  waiting time in the invariant state, which is described (for simplicity, in the case $d = 2$) in Section \ref{num}. 
  Additionally, we  show in Theorem \ref{thm-dec} that,
  under a tail (or, equivalently, moment) condition on the service distribution,
  the invariant queue length distribution exhibits a doubly
  exponential decay rate.
  Our numerics show that while the decay rate of the invariant queue distribution is of interest,
  it may not manifest itself till far into the tail, and thus it is important to be able to compute and characterize 
  finite queue length exceedance probabilities as well, which is feasible with our algorithm.

  Our work also takes a small step  towards understanding
  equilibrium behavior in large systems. 
    Using our numerical algorithm,  we  provide
  provide  numerical evidence that appears to support the conjecture that for a large class of service distributions,
  the invariant state is the limit of the stationary distributions  of 
  $N$-server systems. 
 Since the $N$-server stationary distributions are known to be tight (due to the results in 
  \cite{Bra11}),   to provide a  rigorous proof of this convergence it would suffice to show
  that the solution to the hydrodynamic equations converges to the invariant state characterized here for a large class of
  initial conditions.   Whereas this is a non-trivial problem that is relegated to future work, 
  it is encouraging that analogous results have been recently established for  a related problem. 
Specifically, the  framework used in
   \cite{AghRam17} to obtain the hydrodynamic limit builds upon 
  simpler  measure-valued representations introduced
  in \cite{KasRam11} and \cite{KanRam10} to analyze many-server systems with a common queue and their fluid limits in
  the absence and presence of abandonments (so-called GI/G/N and GI/G/N+G queues). 
  Invariant states of the associated fluid limits were characterized in Section 6 of \cite{KasRam11} and \cite{KanRam12},
  and more recent work has 
  established convergence to equilibrium  for a large class of general service distributions
  beyond those with decreasing hazard rate \cite{AtaKanKasRam20}. 
  Thus, while a more sophisticated analysis will no doubt be needed to consider the more complicated system of measure-valued
  equations that comprise the hydrodynamic equations, these recent results do offer some hope that this approach is tractable. 
  
   We close by mentioning a related model considered in \cite{VasMukMaz18, VasMukMaz19} of a parallel server network with $SQ(d)$ routing, general service distributions and Poisson arrivals.   Specifically, the work \cite{VasMukMaz18} considers the case when each server has a queue and 
 uses head-of-the-line processor sharing (instead of first-in-first-out, as considered here) to process jobs in the queue, whereas the article \cite{VasMukMaz19} considers a loss network.   Both works employ a measure-valued representation like that in \cite{AghRam17}, establish a hydrodynamic limit, as the number of servers goes to infinity. They also characterize the invariant state and establish insensitivity to the distribution by showing that the  solution to the fixed point  for all service distributions coincides with the one in the exponential case.  
 
  \subsection{Organization of the Paper and Common Notation}
  
  This article is organized as follows. Section \ref{sec-hydro} introduces the
  basic assumptions and the equations that characterize the hydrodynamic limit of the model.
  Section \ref{sec-eqpt} states the main results. Section \ref{num} presents
  numerical approximations of the invariant state performance measures for various distributions,
  and also provides
evidence to support convergence of equilibrium measures to the invariant state.   The proofs of
Theorems \ref{th-reduction} and \ref{thm-dec} are given in 
Sections \ref{proof-thm}  and 
Section \ref{sec-decay}, respectively.
Proofs of certain technical results are relegated to Appendices \ref{subs-abscont}--\ref{sec-unique}.

Throughout,  we will use the following notation. Let $\N$  and $\Z$ denote the set of natural numbers and integers respectively.
For every $E\subset\mathbb{R}$, let $\mathbb{C}_b(E)$ denote the space of continuous bounded functions on $E$, and $\mathcal{B}(E)$ denote the Borel $\sigma$-algebra on $E$.
 Also, for a metric space $\mathbb{X}$, let $\mathbb{D}_{\mathbb{X}}[0, \infty)$ denote the set of $\mathbb{X}$-valued functions on $[0, \infty)$ that are right
     continuous and have finite left limits on $(0, \infty)$, and let
     $\mathbb{C}_{\mathbb{X}}[0, \infty)$ be the subset of $\mathbb{X}$-valued continuous functions on $[0, \infty)$.
         Let $\mathbb{M}_{\leq 1}[0,\infty)$ denote the space of sub-probability measures on $[0,a)$ for some $a \in (0,\infty)$.  Given an interval $A \subset \R$, let   ${\mathcal B}(A)$ denote
             the space of Borel subsets of $A$. 
For any $V\subset {\mathcal B}[0,\infty)$,  a measure $\mu$ on $[0,\infty)$  and an integrable
function $f:V \mapsto \mathbb{R}$, we use the
notation 
\[  \langle f, \mu \rangle_V := \int_V f(x) \mu (dx),  \]
and omit the subscripts $V$ when $V = [0,\infty)$. Also, let $a\wedge b$ represent $\min (a,b)$ for $a,b\in\R$.
  
\section{Hydrodynamic Equations}
\label{sec-hydro}

In this section, we introduce the equations that characterize the hydrodynamic limit of our load balancing model, as established in
\cite{AghRam17}.  Throughout, we make the following assumptions on the service distribution.
\begin{assumption} The service distribution whose cumulative distribution function (cdf) is denoted by $G$,  has density $g$ and finite mean, which can (and will) be set to $1$.
\label{ass-G}
\end{assumption}
Define $\bar{G}(x):=1-G(x)$ and let 
 $L := \sup\{x \in [0, \infty) : \bar{G}(x) >0\}$ be  the right-end of the support of the distribution.
 Also, let $h:[0,L)\mapsto[0,\infty)$ denote the hazard rate function: 
 \begin{equation*}
 h(x)=\frac{g(x)}{\bar{G}(x)},  \quad x \in [0,L). 
 \end{equation*}

 In \cite{AghRam17} the state of an $N$-server randomized load balancing system is represented by the Markov process $\nu^{(N)}(t)=\{\nu^{(N)}_\ell(t)\}_{\ell\in\N}$, where for each $\ell$, $\nu_\ell^{(N)}(t)$ is a (random) finite measure on $[0,\infty)$ that has a unit delta mass at the age (that is, the time spent in service by time $t$) of each job that, at time $t$, is in service at a queue of length  no less than $\ell$.  For every $t \geq 0$, the scaled state $\nu^{(N)}(t)/N$ takes values in $\s$, where 
\begin{align}
\label{def-space}
\s:=&\left\{(\mu_\ell;\ell\in\N)\in\mathbb{M}_{\leq 1}[0,L)^{\mathbb{N}}:\langle f,\mu_\ell\rangle\geq\langle f,\mu_{\ell+1}\rangle,\forall\ell\in\N, f\in\mathbb{C}_b[0,\infty),f\geq 0\right\}, 
\end{align}
where recall  $\mathbb{M}_{\leq 1}[0,L)$ is the space of sub-probability measures on $[0,L)$. 

 For every $d\geq 2$, define
\begin{equation}\label{def_poly}
  \mathfrak{P}_d(x,y):= \frac{x^d-y^d}{x-y}= \sum_{m=0}^{d-1} x^m \;y^{d-1-m}.
\end{equation}
Note that $\mathfrak{P}_2(x,y)=x+y$, and for $0\leq y\leq x$,
\begin{equation}
\label{Pdmax}
\mathfrak{P}_d(x,y)\leq d x^{d-1}.
\end{equation}

We  recall the definition of the hydrodynamic equations given in \cite[Section 2.3]{AghRam17}. 

\begin{definition}\textbf{(Hydrodynamic Equations) }\label{Def_fluid}
  Given $\lambda > 0$ and $\nu(0) \in \s$, $\{\nu(t)=\nu_\ell(t);\ell\in\N,t\geq0\}$ in
  $\C_{\s}[0,\infty)$ is said to solve the  
    \textit{hydrodynamic equations} associated with $(\lambda, \nu(0))$ if and only if
    for every  $t\in\hc$,
\begin{equation}\label{Fluid_bound}
        \int_0^t\langle h,\nu_1(s) \rangle ds<\infty,
\end{equation}
and the following equations are satisfied:
\begin{equation}\label{Fluid_Balance}
  \langle\f1,\nu_\ell(t)\rangle - \langle\f1,\nu_\ell(0)\rangle = D_{\ell+1}(t)+\int_0^t\langle\f1,\eta_\ell(s)\rangle ds -D_\ell(t),\qquad \ell\in\N,
\end{equation}
and, for every $f\in\mathbb{C}_b[0,\infty)$,
\begin{align}\label{Fluid_f}
    \langle f, \nu_\ell(t)\rangle = &\langle f(\cdot+t)\frac{\bar G(\cdot+t)}{\bar G(\cdot)},\nu_\ell(0)\rangle +\int_{[0,t]} f(t-s)\bar G(t-s)dD_{\ell+1}(s)\\
    & + \int_0^t \langle f(\cdot+t-s)\frac{\bar G(\cdot+t-s)}{\bar G(\cdot)},\eta_\ell(s)\rangle ds, \notag
\end{align}
where
\begin{equation}\label{Fluid_D}
    D_\ell(t):=\int_0^t\langle h,\nu_\ell(s) \rangle ds,\quad\quad \ell\in\N,
  \end{equation}
and
\begin{equation}\label{Fluid_R}
  \eta_\ell(t):=\twopartdef{\lambda\left(1-\langle\f1,\nu_1(t)\rangle^d\right)\delta_0}{\ell=1,} {\lambda\mathfrak{P}_d\Big(\langle \f1,\nu_{\ell-1}(t)\rangle,\langle \f1,\nu_\ell(t)\rangle\Big)(\nu_{\ell-1}(t)-\nu_\ell(t))}{\ell\geq 2.}
\end{equation}
\end{definition}

\begin{remark}
The bound \eqref{Fluid_bound} implies that for every $\ell\in\N$, the
process $D_\ell$ in \eqref{Fluid_D} is well defined. 
\end{remark}

We now briefly provide some intuition behind the form  of the hydrodynamic equations. The term $\langle h,\nu_\ell(s)\rangle$ 
represents the limiting mean conditional scaled departure rate from queues of length at least $\ell$ at time $s$, and thus $D_\ell(t)$ given by \eqref{Fluid_D}  represents the limiting cumulative scaled departure rate from queues of length at least $\ell$ at  time $t$. 
The term $\langle \f1,\nu_\ell(t)\rangle$ represents the limiting fraction of queues of length at least $\ell$ at time $t$.  Note that the scaled arrival rate of jobs to the network is given by $\lambda$, and for $\ell \geq 1$, the probability that an arriving job is routed to a queue of length $\ell - 1$ at time $s$ is equal to $\mathfrak{P}_d(\langle\f1,\nu_{\ell-1}(s)\rangle,\langle\f1,\nu_\ell(s)\rangle)(\langle\f1,\nu_{\ell-1}(s)\rangle-\langle\f1,\nu_\ell(s)\rangle)$, with the convention $\langle\f1,\nu_0(s)\rangle = 1$. Thus, $\langle\f1, \eta_\ell (s)\rangle$, where $\eta_\ell$ is defined by \eqref{Fluid_R}, represents the scaled arrival rate at time $s$ of jobs to queues of length $\ell - 1$. Hence,  $\int_0^t\langle\f1,\eta_\ell(s)\rangle ds$ is the total increase in the fraction of queues of length greater than or equal to $\ell$ due to arrivals in the interval $[0, t]$. On the other hand, $D_\ell(t) - D_{\ell+1}(t)$ represents the cumulative decrease in the fraction of queues of length at least $\ell$ due to service completions. The mass balance equation, \eqref{Fluid_Balance} is a result of these observations.
Lastly, the  right-hand side  of equation \eqref{Fluid_f} describes the three terms that contribute to the measure $\nu_\ell(t)$. The first term accounts for jobs that were already in service at time $0$. The second term represents the contribution due to departures from queues of length greater than or equal to  $\ell +1$ in the interval $[0,t]$. The third term represents the contribution to $\nu_\ell(t)$ due to jobs that entered service at a queue of length at least $\ell$ at some time $s \in (0, t]$ and are still in service at time $t$. \\

Along with Assumption \ref{ass-G},
we also make the following assumption throughout. 

\begin{assumption} There exists a unique solution to the hydrodynamic equations.
\label{ass-G2}
\end{assumption}

  \begin{remark}
    A sufficient condition for Assumption \ref{ass-G2}, as established in \cite[Theorem 2.6]{AghRam17},
    is that the density $g$ of the service distribution be bounded on every finite interval of $[0,L)$.
      Examples of distributions satisfying Assumptions \ref{ass-G} and \ref{ass-G2} are  given in Remark 2.1 of \cite{AghLiRam15}. 
 \end{remark}

\section{Main Results}
\label{sec-eqpt}

To state our main results, we will require the following basic definition. 

\begin{definition}[{\bf Invariant State}]
  Given $\lambda > 0$, $\nus\in\s$ is said to be an invariant state of the hydrodynamic equations with arrival rate
  $\lambda$ if for every $t\geq 0$,  $\nu \in \mathbb{C}_{\s}[0,\infty)$ defined by
\begin{equation*}
\nu(t)=\nus\qquad \forall t\geq 0.
\end{equation*}
  solves the hydrodynamic equations associated with $(\lambda, \nu^*)$.
\end{definition}

When $\lambda$ is clear from the context, we will just say it is an invariant
state of the hydrodynamic equations. 
 Given an invariant state  $\nus = (\nu_\ell^*)_{\ell \in\N}$  of the hydrodynamic equations, we define 
\begin{equation}
\label{def-ssl}
  \ssl := \langle \f1, \nu_\ell^* \rangle, \quad   \ell \in\N, 
\end{equation}
which captures the corresponding invariant queue length distribution.
We will only be interested in  invariant states for which the 
invariant queue length distribution satisfies 
\begin{equation}
  \label{eq-tailcond}
  \lim_{\ell \rightarrow \infty} \ssl = 0. 
\end{equation}

Note that this is a necessary condition for the mean of the invariant
queue length distribution to be finite. Moreover, 
  as shown in Lemma \ref{lem-s1eqbm}, the  condition \eqref{eq-tailcond} turns out to be
   equivalent to requiring $s_1^* = \lambda$, which 
 will  be satisfied by
 any ``physically relevant'' invariant state,
 including one that arises as the limit of
 stationary $N$-server systems.

 \begin{remark}
   \label{rem-invstate}
   In the sequel, we will refer to a physical invariant state as any invariant state
   that additionally satisfies \eqref{eq-tailcond}.   The latter  condition is necessary
for uniqueness even in the case of an  exponential service
distribution since, for example, as it is easy to verify, for any
service distribution $G$, $\nu^*_\ell(dx) = \bar{G}(x) dx$ (and, consequently, 
 $s_\ell^* = 1$)  for every $\ell \in \N$, is always an invariant state of the hydrodynamic equations.
\end{remark}

 We now state our first main result on existence and characterization
 of physical invariant states.  Its proof is given in Section \ref{pf-mainresult}.
\begin{theorem}
\label{th-reduction}
Suppose Assumptions \ref{ass-G} and \ref{ass-G2} hold and fix $\lambda \in (0,1]$, $d \in \N$.  
Then  there exists a  physical invariant state of the hydrodynamic equations with arrival rate $\lambda$
if and only if there exist a sequence of measurable functions $\{r_\ell\}_{\ell\in\N}$ on
$[0,\infty)$ that are continuously  differentiable on $[0,L)$ such that for  $x\notin [0,L)$,  $\vsl(x)=0$ 
 and
 for $x\in [0,L)$, $r_1(x)= \lambda$ and for $\ell\geq 2$,
\begin{align}
\label{eqn-vsl}
\vsl (x) =& 
\lambda (\ssl)^d e^{-\lambda \mathfrak{P}_d( \sslm, \ssl) x} + \lambda  \mathfrak{P}_d( \sslm, \ssl)\int_0^x
      e^{-\lambda  \mathfrak{P}_d( \sslm, \ssl) (x-u)} r_{\ell-1} (u)   du,
\end{align}
where for $\ell \geq 1$, 
\begin{equation}
\label{eqn-ssl}
   \ssl   = \int_0^L  \vsl (x) \bar{G} (x) dx. 
\end{equation}
In this case, 
      \begin{equation}
\label{def-vsl}
  \nu_\ell^* (A)  = \int_A \vsl (x) \bar{G}(x) dx, \qquad  A \in {\mathcal
    B}([0,L)), \quad \ell \in  \N, 
\end{equation}
 Furthermore,  a  physical  invariant state of the hydrodynamic equations
 with arrival rate $\lambda$  always exists, and also satisfies 
\begin{equation}
\label{eqn-vslbc2} 
\lambda (\sslm)^d =  \int_0^L g(x) \vsl (x) dx,\qquad\ell\geq 2. 
\end{equation}
\end{theorem}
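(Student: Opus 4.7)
My strategy is to work with the test-function identity \eqref{Fluid_f} specialized to $\nu(t) = \nu^*$, so that $D_\ell(t) = t\langle h,\nu^*_\ell\rangle$ and $\eta_\ell(s) = \eta^*_\ell$ is time-independent with the form given in \eqref{Fluid_R}. Since the left-hand side of \eqref{Fluid_f} does not depend on $t$, letting $t\to\infty$ annihilates the initial-condition term (thanks to the integrability of $\bar G$ on $[0,L)$ together with $\supp(\nu^*_\ell)\subset[0,L)$) and, after a Fubini interchange, leaves an identity of the form
\begin{equation*}
\langle f,\nu^*_\ell\rangle \;=\; \int_0^L f(u)\,\bar G(u)\, r_\ell(u)\,du
\end{equation*}
for every $f\in\mathbb{C}_b[0,\infty)$, which is precisely \eqref{def-vsl}. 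Invoking Lemma \ref{lem-s1eqbm} to obtain $s_1^* = \lambda$, the $\ell=1$ case forces $r_1 \equiv \lambda$. For $\ell\geq 2$ the same computation yields the Volterra identity
\begin{equation*}
r_\ell(u) \;=\; \lambda(\ssl)^d + \lambda\,\mathfrak{P}_d(\sslm,\ssl)\int_0^u \bigl(r_{\ell-1}(x) - r_\ell(x)\bigr)\,dx,
\end{equation*}
and differentiating in $u$ gives a first-order linear ODE whose integrating-factor solution is exactly \eqref{eqn-vsl}; the scalar identity \eqref{eqn-ssl} is then immediate from $\ssl = \langle\f1,\nu^*_\ell\rangle$.

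The converse direction---that any $\{r_\ell,\ssl\}$ satisfying \eqref{eqn-vsl}--\eqref{eqn-ssl} yields a physical invariant state via \eqref{def-vsl}---I would verify by direct substitution, essentially running the Fubini interchange above in reverse. The boundary identity \eqref{eqn-vslbc2} comes from the mass balance \eqref{Fluid_Balance} specialized to equilibrium: differentiating in $t$ reduces it to the flow equation $\langle h, \nu^*_\ell\rangle - \langle h, \nu^*_{\ell+1}\rangle = \lambda(\sslm^d - \ssl^d)$ for $\ell \geq 2$, and telescoping this from $\ell$ upward, combined with the physicality condition $\ssl \to 0$ (which implies $\langle h,\nu^*_\ell\rangle \to 0$ by monotone convergence), gives $\langle h,\nu^*_\ell\rangle = \lambda\,\sslm^d$; rewriting the left-hand side through the density representation as $\int_0^L g(x)\,r_\ell(x)\,dx$ produces exactly \eqref{eqn-vslbc2}.

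Finally, existence I would establish by induction on $\ell$: set $r_1 \equiv \lambda$ and $s_1^* = \lambda$, and at stage $\ell \geq 2$ observe that with $\sslm$ and $r_{\ell-1}$ fixed, \eqref{eqn-vsl} expresses $r_\ell$ as a jointly continuous function of the single unknown $\ssl\in[0,\sslm]$, so that \eqref{eqn-ssl} collapses to a one-dimensional fixed-point equation $\ssl = \Psi_\ell(\ssl)$ with $\Psi_\ell : [0,\sslm] \to [0,\sslm]$ continuous; an intermediate-value argument then delivers a fixed point, and monotonicity of \eqref{eqn-vsl} in $r_{\ell-1}$ preserves the nesting $\ssl \leq \sslm$ needed for membership in $\s$. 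The main technical obstacle I foresee is the absolute continuity of each $\nu^*_\ell$ with density \emph{precisely} of the form $r_\ell(x)\,\bar G(x)$: the upgrade from the weak identity above to this pointwise density structure requires a separate argument (to be carried out in Appendix \ref{subs-abscont}), and is the only step where the invariance of $\nu^*$ is used in an essentially measure-theoretic way rather than through scalar balance relations; a secondary concern is uniformly controlling the $t\to\infty$ limit in \eqref{Fluid_f} when $L = \infty$, which should follow from the integrability of $\bar G$.
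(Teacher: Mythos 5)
Your overall architecture—reduce the invariance identity to a Volterra/ODE relation for a density $r_\ell$, then construct the sequence inductively via a one-dimensional fixed-point search—matches the paper's, but you reach the ODE by a genuinely different route: you send $t\to\infty$ in \eqref{Fluid_f} to kill the initial-condition term and extract \eqref{def-vsl} in a single stroke, whereas the paper works with the identity at every finite $t$, splits the test-function identity into the two regimes $x\geq t\wedge L$ and $x<t\wedge L$, derives the ODE from the second regime, and then verifies the first regime is automatic; it also proves absolute continuity of each $\nu^*_\ell$ by a separate covering argument (Appendix \ref{subs-abscont}) rather than reading it off a limit. Your route is more economical in principle, and your observation that the $t\to\infty$ identity directly exhibits $\nu^*_\ell$ as absolutely continuous is correct; but the Fubini interchange in that limit is not free, because $\int_0^\infty\langle f(\cdot+w)\bar G(\cdot+w)/\bar G(\cdot),\nu^*_{\ell-1}\rangle\,dw$ is in effect a residual-service-time moment that need not be finite under only the first-moment Assumption \ref{ass-G}. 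You can rescue this by first taking $f$ compactly supported in $[0,K]$ (so $\bar G$ is bounded below on the relevant range) and then extending, but this needs to be said, since the naive bound $\bar G(x+w)/\bar G(x)\le 1$ gives an infinite dominating integral.

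The more serious gap is in the existence step. You assert that at stage $\ell$ the map $\Psi_\ell$ is continuous from $[0,\sslm]$ to $[0,\sslm]$, so an intermediate-value argument furnishes a fixed point, with monotonicity ``preserving the nesting.'' But $\Psi_\ell(\sslm)\le\sslm$ is precisely the nontrivial inequality, and it does not follow from continuity or from monotonicity of \eqref{eqn-vsl} in $r_{\ell-1}$ alone. The paper establishes it via an integration by parts (identity \eqref{IBP}) which, after using $\int_0^L r_{\ell-1}\bar G=\sslm$, reduces $\Psi_\ell(\sslm)-\sslm$ to
\[
\bigl[\lambda(\sslm)^d - r_{\ell-1}(0)\bigr]\!\int_0^L e^{-\lambda d(\sslm)^{d-1}x}\bar G(x)\,dx \;-\;\int_0^L\!\Bigl(\int_0^x e^{-\lambda d(\sslm)^{d-1}(x-u)}\,r_{\ell-1}'(u)\,du\Bigr)\bar G(x)\,dx ,
\]
and the sign only comes out right because (i) for $\ell\geq 3$ the boundary normalization $r_{\ell-1}(0)=\lambda(\sslm)^d$ kills the first bracket, and (ii) $r_{\ell-1}'\geq 0$ (Lemma \ref{vprop}), which is itself a nontrivial property proved by its own induction using the nesting $r_{\ell}\leq r_{\ell-1}$ coming from $\nu^*\in\s$. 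For $\ell=2$ the argument is different again: there $r_1\equiv\lambda$ is constant, the derivative term vanishes, and the first bracket becomes $\lambda(\lambda^d-1)<0$ because $\lambda<1$. None of these ingredients (the boundary condition $r_{\ell-1}(0)=\lambda(\sslm)^d$, the monotonicity $r_{\ell-1}'\geq0$, the case split $\ell=2$ versus $\ell\geq3$) appears in your sketch, and without them the IVT has no sign change to exploit; this is where the proof actually lives. A related but lesser imprecision is your appeal to ``monotone convergence'' for $\langle h,\nu^*_\ell\rangle\to 0$: that too relies on the monotone decrease of $r_\ell$ in $\ell$ (so that $r_\ell\leq r_1=\lambda$ is an $L^1(g\,dx)$ dominator), which should be stated since it is exactly Lemma \ref{vprop}.
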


\begin{remark}
\label{rem-s-fixed}
 Now, fix $\lambda\in(0,1)$ and for $d\geq 2$, define $r_1\equiv\ss_1=\lambda$. Note that $\{\ssl\}_{\ell\geq 1}$ is a physical invariant state or, equivalently, satisfies \eqref{eqn-vsl}-\eqref{eqn-ssl} if and only if for $\ell\geq 2$,  $\ssl$ is a fixed point of the map
\begin{equation}
  \label{eq-fld}
  F_{\ell}(s) := 
\lambda s^d \int_0^L e^{-\lambda \mathfrak{P}_d( \sslm, s) x} \bar{G}(x) dx+ \lambda  \mathfrak{P}_d( \sslm, s)\int_0^L\left(\int_0^x
e^{-\lambda  \mathfrak{P}_d( \sslm, s) (x-u)} r_{\ell-1} (u)   du\right)\bar{G}(x) dx, 
\end{equation}
on $[0,\sslm]$.
\end{remark}

As an immediate consequence of Theorem \ref{th-reduction}, we recover 
the following well-known result for the exponential service distribution (see  \cite[Theorem 1]{VveDobKar96} and \cite[Lemma 2]{Mit01}).
\begin{remark}
\label{cor-exp}
If $\bar{G} (x) = e^{-x}$, then $g(x)=\bar{G}(x)$ and \eqref{eqn-ssl} and \eqref{eqn-vslbc2} show that $s_\ell^*=\lambda(s_{\ell-1}^*)^d$ for all $\ell\geq 2$. Hence, solving the recursion with $s_1^*=\lambda$, we obtain
\[ 
\ssl = \lambda^{\frac{d^\ell-1}{d-1}}, \quad \ell \in \mathbb{N}. 
\]

\end{remark}

Our next result, which concerns the tail behavior of a physical invariant state, requires an additional condition: 
\begin{assumption}
\label{ass-service}
Fix $d\geq 2$. Suppose  there exist $x_0>1/\lambda d$,  $C_0<\infty$, and ${\beta}>d/(d-1)$ such that 
\begin{equation}
\label{Gdecay}
\bar{G}(x)\leq C_0x^{-{\beta}},\qquad \forall x\geq x_0.
\end{equation}
\end{assumption}
\begin{remark}
\label{rem-dec}
 This tail condition is almost equivalent to a moment condition. Specifically, a sufficient condition for the tail condition is that service distribution have finite $(\beta+1)+\varepsilon$ moment for some $\varepsilon>0$, whereas a necessary condition is that the service  distribution
has a finite $(\beta+1)$ moment. Thus, it is immediate that any distribution with all moments finite, such as the Gamma, shifted exponential and lognormal, as well as the Pareto, the latter with tail parameter greater than $\beta+1$), all satisfy the conditions in Assumption \ref{ass-service}.
\end{remark}

We prove the following result in Section \ref{subsec-reduc}.
\begin{theorem} 
  \label{thm-dec}
  Fix $\lambda \in (0,1)$ and $d \geq 2$ and 
suppose Assumptions \ref{ass-G}--\ref{ass-service} hold. Then, the queue length
distribution $(s_\ell^*)_{\ell\in \N}$  associated with any physical invariant state
exhibits a doubly exponential decay, in the sense that   
there exists a constant  $n_d=n_d(\beta)>0$ such that 
\begin{equation}
\label{dbdecay}
\liminf_{\ell\rightarrow\infty}\frac{1}{\ell}\log_d\log\left(\frac{1}{s_\ell^*}\right)\geq n_d.
\end{equation}
\end{theorem}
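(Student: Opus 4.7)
I would prove the theorem by deriving a recursive upper bound $s_\ell^*\le C(s_{\ell-1}^*)^\alpha$ with some $\alpha>1$ (valid for all sufficiently large $\ell$) and then iterating to extract doubly exponential decay. Writing $t_\ell:=\log(1/s_\ell^*)$, such a recursion yields $t_\ell\ge \alpha\,t_{\ell-1}-O(1)$, which grows like $\alpha^\ell$ once $\ell$ is large enough (possible because $s_\ell^*\to 0$ by the ``physical'' condition \eqref{eq-tailcond}), giving $\log_d\log(1/s_\ell^*)\ge \ell\log_d\alpha+O(1)$ and hence $n_d\ge\log_d\alpha>0$.

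To build such a recursion I would first show by induction that $r_m\le\lambda$ for all $m$: trivially $r_1\equiv\lambda$, and \eqref{eqn-vsl} represents $r_\ell(x)$ as a convex combination of $r_\ell(0)=\lambda(s_\ell^*)^d\le\lambda$ and past values of $r_{\ell-1}$. Using $r_{\ell-1}\le\lambda$ in \eqref{eqn-vsl} then gives
\[
r_\ell(x)\le \lambda-\lambda(1-(s_\ell^*)^d)e^{-A_\ell x},\qquad A_\ell:=\lambda\mathfrak P_d(s_{\ell-1}^*,s_\ell^*)\le \lambda d(s_{\ell-1}^*)^{d-1},
\]
and integrating against $\bar G$ via \eqref{eqn-ssl} produces $s_\ell^*\le \lambda(s_\ell^*)^d+\lambda\int_0^L(1-e^{-A_\ell x})\bar G(x)\,dx$. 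The second integral is controlled by the tail bound $\bar G(x)\le C_0 x^{-\beta}$: splitting at a truncation $T\sim A^{-1/\beta}$ (with $T\to\infty$ when $\beta>2$, in which case $\int_0^\infty x\bar G\,dx<\infty$), one shows
\[
\int_0^L(1-e^{-Ax})\bar G(x)\,dx\le C\,A^{\min(1,\beta-1)}
\]
(with an extra $\log(1/A)$ factor at $\beta=2$). Combining the two inequalities yields $s_\ell^*\le\lambda(s_\ell^*)^d+C(s_{\ell-1}^*)^{\alpha_0}$ with $\alpha_0:=(d-1)\min(1,\beta-1)$.

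The hypothesis $\beta>d/(d-1)$ is equivalent to $(d-1)(\beta-1)>1$, so $\alpha_0>1$ in every case \emph{except} $d=2$ with $\beta>2$; in all such cases the $\lambda(s_\ell^*)^d$ term is absorbable into the left-hand side for large $\ell$, closing the recursion with $\alpha=\alpha_0$.

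For the borderline case $d=2$ and $\beta>2$, where $\alpha_0=1$, I would iterate the integral equation one more time: substitute the analogous upper bound $r_{\ell-1}(u)\le\lambda-\lambda(1-(s_{\ell-1}^*)^d)e^{-B_{\ell-1}u}$, with $B_{\ell-1}:=\lambda\mathfrak P_d(s_{\ell-2}^*,s_{\ell-1}^*)$, back into \eqref{eqn-vsl}, swap integrals by Fubini, and observe that the two leading-order contributions (both essentially equal to $\int_0^L x\bar G(x)\,dx$, finite because $\beta>2$) cancel, leaving a residual of order $A_\ell B_{\ell-1}$ times a higher-order moment of $\bar G$. This produces a two-step recursion $s_\ell^*\le C\,s_{\ell-1}^* s_{\ell-2}^*$ (plus faster-decaying corrections); then $t_\ell\ge t_{\ell-1}+t_{\ell-2}-O(1)$ grows at the golden-ratio rate $\phi=(1+\sqrt 5)/2$, still yielding doubly exponential decay. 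The hardest sub-subcase is $\beta\in(2,3)$, in which $\int x^2\bar G\,dx$ is infinite and the clean Taylor cancellation has to be replaced by a truncated estimate matched to the polynomial tail; making this quantitative while keeping the effective exponent strictly above $1$ is where the main technical difficulty would lie.
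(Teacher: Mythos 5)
Your proposal takes a genuinely different route from the paper. The paper proves a $\lfloor\beta\rfloor$-step product bound $s_\ell^*\le \lambda(3C_\beta)^{\ell-1}\big(\prod_{i=\ell-\lfloor\beta\rfloor+1}^{\ell-1}(s_i^*)^{d-1}\big)(s_{\ell-\lfloor\beta\rfloor}^*)^{\hat\beta(d-1)}$ (Proposition~\ref{prop-upbnd}, via Lemmas~\ref{prop-rineq} and \ref{lem-int}), then handles the geometrically-growing prefactor $(3C_\beta)^{\ell-1}$ by first extracting an a priori exponential decay (Lemma~\ref{prop-Lmin}) and then analyzing an inhomogeneous linear recursion through its generating polynomial and Rouch\'e's theorem (Lemma~\ref{prop-dec}). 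You instead aim for a fixed-constant one-step bound $s_\ell^*\le\lambda(s_\ell^*)^d+C(s_{\ell-1}^*)^{\alpha_0}$ with $\alpha_0=(d-1)\min(1,\beta-1)$, which is cleaner; your preliminary steps ($r_m\le\lambda$ by the convex-combination structure of \eqref{eqn-vsl}, the bound $r_\ell(x)\le\lambda-\lambda(1-(s_\ell^*)^d)e^{-A_\ell x}$, and $\int_0^L(1-e^{-Ax})\bar G\,dx\lesssim A^{\min(1,\beta-1)}$) are all correct. For $d\ge 3$ this immediately gives $\alpha_0>1$ in all cases covered by Assumption~\ref{ass-service}, which is a shorter and more transparent proof than the paper's.

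However, for $d=2$ the hypothesis $\beta>d/(d-1)$ forces $\beta>2$, so $\alpha_0=1$ throughout and the one-step recursion degenerates. Your fix — iterate \eqref{eqn-vsl} once more, Taylor-expand $\hat G(A)-\hat G(B)$, and observe the leading $m_1$-terms cancel to get $s_\ell^*\lesssim (s_{\ell-1}^*)^3 + s_{\ell-1}^*s_{\ell-2}^*$ — is only justified when $m_2=\int x^2\bar G\,dx<\infty$, i.e.\ $\beta>3$, and even there it is left as a sketch (the needed absorption of the geometric factor when $s_{\ell-2}^*\approx s_{\ell-1}^*$, where $B-A$ is small, is not controlled). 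The subcase $d=2$, $\beta\in(2,3)$ — which you flag yourself as ``where the main technical difficulty would lie'' — is precisely the regime where the theorem is sharp (the paper's $n_d(\beta)\to 0$ as $\beta\downarrow 2$, and Bramson--Lu--Prabhakar show a power-law tail past that threshold), so leaving it unresolved is a genuine gap, not a technicality. Concretely: when $m_2=\infty$, the remainder in $\hat G(A)-\hat G(B)-(B-A)m_1$ is $O(B^{\beta-1})$ rather than $O(B^2)$, and dividing by $B-A$ requires a lower bound on $s_{\ell-2}^*-s_\ell^*$ that is not available a priori; the paper circumvents this by never expanding $\hat G$ and instead carefully integrating the explicit product bound on $r_\ell$ against the power-law tail (Lemma~\ref{lem-int}), which is where the fractional exponent $\hat\beta(d-1)$ on $s_{\ell-\lfloor\beta\rfloor}^*$ comes from.
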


The result in Theorem \ref{thm-dec}   significantly 
extends previous results on the power of two choices for specific classes of
service distributions such as  exponential \cite{Mit01,VveDobKar96},
phase-type 
\cite{LiLui10} and  Pareto with parameter $\alpha>2$ \cite{BraLuPra13}.  
Besides applying to more general distributions like lognormal and shifted exponential distributions
that are relevant in practice, it 
 provides a unified approach for obtaining all these results as special cases. 
The most  significant contribution of Theorem \ref{thm-dec} is in identifying the tail decay
condition on the service distribution in  Assumption \ref{ass-service} as the precise  property that leads to a  doubly exponential tail decay. 
Indeed, the results for power-law distributions obtained in  \cite{BraLuPra13} provide a
counterexample that shows that when this tail decay condition is not satisfied, then
the asymptotic tails of the invariant queue lengths need not be  doubly exponential, and in fact, could be power law.   In this sense,  the result of Theorem \ref{thm-dec} is tight.

While the tail decay property is an interesting property, the decay
property may not manifest itself till rather far into the tails and so
finite (invariant) queue length exceedance probabilities are often of more
practical interest.  
In Section \ref{num}, we illustrate how for $d=2$, the  characterization of the invariant states of the measure-valued hydrodynamic equations obtained in Theorem \ref{th-reduction}
allows us to compute more general
invariant quantities of relevance besides the queue length, such
as the virtual waiting time, which appears not to have been considered in the literature before (except in the case of exponential service times).

\section{Numerical Results when $d=2$}
\label{num}

We now present numerical results, for simplicity restricting to the case $d=2$.
\subsection{Properties of the physical invariant state}
We first obtain an alternative representation for $(r_\ell)_{\ell\in\N}$ that is more amenable to computation than 
the one given in \eqref{eqn-vsl}-\eqref{eqn-ssl}.  Recall
the definition of a physical invariant state from Remark \ref{rem-invstate}.

\begin{lemma}
Suppose $\nus = (\nu_j^*)_{j \geq 1}\in\mathbb{S}$ is a physical 
invariant state of the  hydrodynamic equations, with an 
associated sequence of measurable functions $(r_j)_{j \in \N}$ as in
Theorem \ref{th-reduction}. 
Then   $r_1\equiv \lambda$ on $[0,L)$ and  for $j\geq 2$,
\begin{equation}
\label{def-rl-rec}
r_j(x)=\lambda+\lambda\sum_{i=2}^jc_{i,j}e^{-\lambda \mathfrak{P}_d(s_{i-1}^*,s_i^*)x}, \qquad\forall x\in[0,L),
\end{equation}
 where $c_{2,2}:=(s_2^*)^d-1$, and for $j\geq 3$, $c_{i,j}$ are recursively defined as follows:
\begin{equation}
\label{def-const}
c_{i,j}:=\left\{\begin{array}{lr}
c_{i,j-1}\frac{\mathfrak{P}_d(s_{j-1}^*,s_j^*)}{\mathfrak{P}_d(s_{j-1}^*,s_j^*)-\mathfrak{P}_d(s_{i-1}^*,s_i^*)}&\quad \text{if }i<j,\\
(s_j^*)^d-1-\sum_{k=2}^{j-1}c_{k,j}&\quad\text{if }i=j.
\end{array}\right.
\end{equation}
\end{lemma}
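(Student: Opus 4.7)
The plan is to proceed by induction on $j$, plugging the inductive hypothesis for $r_{j-1}$ into the integral equation \eqref{eqn-vsl} that characterizes $r_j$, and then matching coefficients in the resulting linear combination of exponentials.

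For the base case $j=2$, I would substitute $r_1 \equiv \lambda$ directly into \eqref{eqn-vsl}. Writing $a_2 := \lambda \mathfrak{P}_d(s_1^*, s_2^*)$, the convolution integral reduces to
\[
\lambda \mathfrak{P}_d(s_1^*, s_2^*) \int_0^x e^{-a_2(x-u)}\lambda\, du \;=\; \lambda\bigl(1-e^{-a_2 x}\bigr),
\]
so \eqref{eqn-vsl} gives $r_2(x) = \lambda + \lambda\bigl((s_2^*)^d - 1\bigr)e^{-a_2 x}$, which matches \eqref{def-rl-rec}--\eqref{def-const} with $c_{2,2}=(s_2^*)^d-1$.

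For the inductive step, abbreviate $a_i := \lambda \mathfrak{P}_d(s_{i-1}^*, s_i^*)$ and assume
\[
r_{j-1}(u)=\lambda + \lambda\sum_{i=2}^{j-1} c_{i,j-1}\, e^{-a_i u}.
\]
Substituting into \eqref{eqn-vsl} and using the elementary integrals
\[
\int_0^x e^{-a_j(x-u)}\, du = \frac{1-e^{-a_j x}}{a_j},
\qquad
\int_0^x e^{-a_j(x-u)}e^{-a_i u}\, du = \frac{e^{-a_i x}-e^{-a_j x}}{a_j-a_i}\;\;(i<j),
\]
I would collect terms according to the exponent. The coefficient of $e^{-a_i x}$ for $i<j$ equals $\lambda\cdot\frac{a_j}{a_j-a_i}c_{i,j-1}$, which is exactly $\lambda c_{i,j}$ by the first branch of \eqref{def-const} once one observes $\tfrac{a_j}{a_j-a_i}=\tfrac{\mathfrak{P}_d(s_{j-1}^*,s_j^*)}{\mathfrak{P}_d(s_{j-1}^*,s_j^*)-\mathfrak{P}_d(s_{i-1}^*,s_i^*)}$. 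The constant term is $\lambda$ (from $\lambda\cdot a_j/a_j$). Finally, the coefficient of $e^{-a_j x}$ aggregates the $\lambda(s_j^*)^d$ contribution from the first term of \eqref{eqn-vsl}, a $-\lambda$ from the constant integral, and the collected $-\lambda\,a_j c_{i,j-1}/(a_j-a_i)$ pieces from the sum; using the already identified $c_{i,j}$ for $i<j$, this simplifies to $\lambda\bigl((s_j^*)^d - 1 - \sum_{k=2}^{j-1} c_{k,j}\bigr)$, which is $\lambda c_{j,j}$ by the second branch of \eqref{def-const}. This completes the induction.

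The main obstacle is actually quite mild but worth flagging: the integration formula I used for mixed exponentials requires $a_j \neq a_i$ for every $i<j$, i.e., that the numbers $\mathfrak{P}_d(s_{i-1}^*,s_i^*)$ are pairwise distinct. For a physical invariant state this is the generic situation since $s_\ell^*$ is strictly decreasing in $\ell$ (a consequence of $s_\ell^* \to 0$ together with monotonicity from $\nus \in \s$, plus the fixed-point equations), so $\mathfrak{P}_d(s_{i-1}^*,s_i^*)$ is strictly decreasing in $i$. In the degenerate case where two exponents coincide, the integral produces an $x\,e^{-a x}$ term, but then the corresponding denominator in \eqref{def-const} also vanishes; one can handle this either by treating the exponents as distinct and passing to the limit, or by noting that the strict monotonicity above rules out coincidences to begin with. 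Apart from this bookkeeping point, the proof is an essentially mechanical induction driven by the integral equation \eqref{eqn-vsl}.
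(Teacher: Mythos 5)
Your proof is correct and takes essentially the same approach as the paper: induction on $j$ via the integral equation \eqref{eqn-vsl}, computing the elementary exponential integrals and matching coefficients of $e^{-\lambda\mathfrak{P}_d(s_{i-1}^*,s_i^*)x}$, with the paper writing the inductive step as $j\to j+1$ and you as $j-1\to j$. You also flag the distinctness of the exponents $\mathfrak{P}_d(s_{i-1}^*,s_i^*)$ (needed so the denominators in \eqref{def-const} are nonzero), a point the paper's proof leaves implicit; this indeed holds because the argument in the proof of Theorem \ref{th-reduction} shows $J(\bar{s})<0$ strictly when $\lambda<1$, which forces $0<s_\ell^*<s_{\ell-1}^*$ and hence $\ell\mapsto\mathfrak{P}_d(s_{\ell-1}^*,s_\ell^*)$ strictly decreasing.
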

\begin{proof}
The identity $r_1\equiv \lambda$ on $[0,L)$ follows from Theorem \ref{th-reduction}. Substituting $\ell=2$ in \eqref{eqn-vsl} with $r_1\equiv \lambda$, we see that
\begin{align*}
\vs_2(x)&=\lambda(s_2^*)^de^{-\lambda\mathfrak{P}_d(s_1^*,s_2^*)x}+\lambda\mathfrak{P}_d(s_1^*,s_2^*)\int_0^xe^{-\lambda\mathfrak{P}_d(s_1^*,s_2^*)(x-u)}\lambda\, du\\
&= \lambda +\lambda e^{-\lambda\mathfrak{P}_d(s_1^*,s_2^*)x}\left((s_2^*)^d-1\right),
\end{align*}
which verifies \eqref{def-rl-rec} for $j=2$. Now, suppose \eqref{def-rl-rec} holds for some $j\geq 2$. Substituting $\ell=j+1$ in \eqref{eqn-vsl} 
and using  \eqref{def-rl-rec} and \eqref{def-const}, we have
\begin{align*}
r_{j+1} (x)  &=\lambda (s_{j+1}^*)^d e^{-\lambda \mathfrak{P}_d( s_j^*, s_{j+1}^*) x} + \lambda^2 \mathfrak{P}_d( s_j^*, s_{j+1}^*)\int_0^x[1+\sum_{i=2}^j c_{i,j}e^{-\lambda \mathfrak{P}_d(s_{i-1}^*,s_i^*)u}]
      e^{-\lambda  \mathfrak{P}_d( s_j^*, s_{j+1}^*) (x-u)} du\\
      &=\lambda (s_{j+1}^*)^d e^{-\lambda \mathfrak{P}_d( s_j^*, s_{j+1}^*) x} + \lambda(1-e^{-\lambda \mathfrak{P}_d( s_j^*, s_{j+1}^*) x} )\\
      &\quad+\lambda   \sum_{i=2}^jc_{i,j}\frac{ \mathfrak{P}_d( s_j^*, s_{j+1}^*)}{\mathfrak{P}_d( s_j^*, s_{j+1}^*)-\mathfrak{P}_d( s_{i-1}^*, s_{i}^*)}\left(e^{-\lambda \mathfrak{P}_d(s_{i-1}^*,s_i^*)x}-e^{-\lambda  \mathfrak{P}_d( s_j^*, s_{j+1}^*) x}\right)\\
        &=\lambda+ \lambda( (s_{j+1}^*)^d-1)e^{-\lambda \mathfrak{P}_d( s_j^*, s_{j+1}^*) x}+\lambda   \sum_{i=2}^j c_{i,j+1}
      \left(e^{-\lambda \mathfrak{P}_d(s_{i-1}^*,s_i^*)x}-e^{-\lambda  \mathfrak{P}_d( s_j^*, s_{j+1}^*) x}\right) \\
      &=\lambda+\lambda\sum_{i=2}^{j+1}c_{i,j+1}e^{-\lambda \mathfrak{P}_d(s_{i-1}^*,s_i^*)x}, 
\end{align*}
 which shows that \eqref{def-rl-rec} also holds when $j$ is  replaced with $j+1$.  
               Thus, by the principle of mathematical induction, we are done.
\end{proof}

\subsection{An algorithm for computing the invariant state}
The following result, whose proof is deferred to Appendix \ref{sec-unique}, is the key result that allows numerical computation of the invariant states. 
\begin{proposition}
  \label{th-uniqueness}
  When $d=2$ and Assumptions \ref{ass-G} and \ref{ass-G2} hold, then for each $\lambda\in(0,1)$ there exists a unique physical invariant state $(\ssl)_{\ell\in\N}$ for the hydrodynamic equations with arrival rate $\lambda$. In particular, ${s}_1^*=\lambda$ and for each $\ell\geq 2$, ${s}^*_\ell$ is unique fixed point in $[0,{s}^*_{\ell-1}]$ of the function $F_\ell$ defined in \eqref{eq-fld}.
\end{proposition}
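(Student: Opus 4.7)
The plan is to prove the proposition by induction on $\ell$, using Theorem \ref{th-reduction} and Remark \ref{rem-s-fixed} to reduce the problem at each step to showing that $F_\ell$ admits a unique fixed point on $[0, s_{\ell-1}^*]$. The base case $\ell = 1$ follows from Lemma \ref{lem-s1eqbm}: the physicality condition \eqref{eq-tailcond} forces $s_1^* = \lambda$, which via \eqref{eqn-vsl} determines $r_1 \equiv \lambda$ on $[0, L)$.

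For the inductive step, I assume that $s_{\ell-1}^*$ (and hence $r_{\ell-1}$, explicitly given by \eqref{def-rl-rec}) is uniquely determined. Existence of at least one fixed point $s_\ell^* \in [0, s_{\ell-1}^*]$ of $F_\ell$ is supplied by Theorem \ref{th-reduction}, so only uniqueness remains. I plan to analyze $\Phi_\ell(s) := F_\ell(s) - s$ on $[0, s_{\ell-1}^*]$ and establish three properties: first, $\Phi_\ell(0) > 0$, which is immediate since the second term in \eqref{eq-fld} is a strictly positive integral (using $r_{\ell-1} \geq \lambda > 0$); second, $\Phi_\ell(s_{\ell-1}^*) < 0$ for $\lambda \in (0, 1)$, which can be checked by substituting the explicit form \eqref{def-rl-rec} of $r_{\ell-1}$ into \eqref{eq-fld} and exploiting $\lambda < 1$; and third, strict convexity of $F_\ell$ on $[0, s_{\ell-1}^*]$. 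Given these three properties, a strictly convex $\Phi_\ell$ with $\Phi_\ell(0) > 0$ and $\Phi_\ell(s_{\ell-1}^*) < 0$ has exactly one zero in $(0, s_{\ell-1}^*)$: if there were two zeros $z_1 < z_2$, secant extrapolation beyond $z_2$ would force $\Phi_\ell(s_{\ell-1}^*) > 0$, contradicting the boundary value.

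The crux, and the main obstacle, is the strict convexity of $F_\ell$. The key enabling feature of the case $d = 2$ is that $\mathfrak{P}_2(s_{\ell-1}^*, s) = s_{\ell-1}^* + s$ is affine in $s$, so setting $\alpha(s) := \lambda(s_{\ell-1}^* + s)$ one can compactly write
\[
F_\ell(s) = \lambda s^2 I_1(\alpha(s)) + \alpha(s) I_2(\alpha(s)),
\]
where $I_1(\alpha) := \int_0^L e^{-\alpha x}\bar{G}(x)\,dx$ and $I_2(\alpha) := \int_0^L \int_0^x e^{-\alpha(x-u)} r_{\ell-1}(u)\,du\,\bar{G}(x)\,dx$ are smooth, completely monotone functions of $\alpha$. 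Differentiating twice reduces the task to a pointwise positivity estimate for an explicit linear combination of derivatives of $I_1$ and $I_2$. Completing the square on the contribution from $\lambda s^2 I_1(\alpha)$ (using the identity $2 - 4\lambda s x + \lambda^2 s^2 x^2 = (\lambda s x - 2)^2 - 2$) and applying the exponential-sum representation \eqref{def-rl-rec} of $r_{\ell-1}$ to analyze the contribution from $\alpha I_2(\alpha)$ should yield the desired pointwise bound; in the exponential special case this manipulation collapses cleanly to $F_\ell''(s) = 2\lambda(1 + \lambda^2 + \lambda^4)/(1+\alpha(s))^3 > 0$, and I expect the general case to follow by an analogous but more delicate argument that would be complicated for $d \geq 3$ by the non-affine dependence of $\mathfrak{P}_d$ on $s$.
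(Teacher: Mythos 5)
Your top-level reduction (induction in $\ell$, reduce to showing $\Phi_\ell=F_\ell-\mathrm{id}$ has a unique zero in $(0,s_{\ell-1}^*)$, and check $\Phi_\ell(0)>0$, $\Phi_\ell(s_{\ell-1}^*)<0$) matches the paper, but the mechanism you propose for uniqueness --- strict convexity of $F_\ell$ --- is not what the paper proves, and more importantly you have not actually proved it.

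The paper does \emph{not} establish convexity. It proves the strictly weaker statement that $\Phi_\ell'$ is negative at every root of $\Phi_\ell$ in $(0,s_{\ell-1}^*)$ (this is the content of Lemma \ref{uniquefixed}), and it does so by evaluating $\Phi_\ell'$ at a fixed point $\tilde s$ and \emph{using the fixed-point equation} $F_\ell(\tilde s)=\tilde s$ to replace the awkward term $F_\ell(\tilde s)/(s_{\ell-1}^*+\tilde s)$ by something explicit. It then closes the estimate with a pointwise lower bound $r_{\ell-1}(u)\geq r_{\ell-1}(0)=\lambda(s_{\ell-1}^*)^2$ (monotonicity from Lemma \ref{vprop}) and a pair of H\"older-type bounds
\[
\int_0^L e^{-\alpha x}\bar G(x)\,dx\leq\frac{1}{\sqrt{2\alpha}},\qquad
\int_0^L xe^{-\alpha x}\bar G(x)\,dx\leq\frac{1}{2\sqrt{\alpha^{3}}},
\]
both of which exploit $\int_0^L\bar G^2\leq\int_0^L\bar G=1$. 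That trio of tools --- the fixed-point identity, the lower bound on $r_{\ell-1}$, and the $\bar G^2$-H\"older bound --- is precisely what makes the final quantity a perfect square $-\tfrac12(1-\sqrt{\lambda s_{\ell-1}^*})^2<0$. None of these three ingredients appears in your sketch.

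Your convexity step is the genuine gap. Writing $F_\ell''(s)=\lambda\int_0^L\bigl[(\lambda sx-2)^2-2\bigr]e^{-\alpha x}\bar G(x)\,dx+\lambda^2\int_0^L\!\int_0^x (x-u)\bigl(\alpha(x-u)-2\bigr)e^{-\alpha(x-u)}r_{\ell-1}(u)\,du\,\bar G(x)\,dx$, both integrands are sign-indefinite: the first only yields the lower bound $-2\lambda I_1(\alpha)$, and the second is negative wherever $x-u<2/\alpha$. Your plan to ``complete the square'' and ``apply the exponential-sum representation'' stops exactly at the hard step; ``should yield the desired pointwise bound'' and ``I expect the general case to follow'' are not arguments. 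Moreover, strict convexity on all of $[0,s_{\ell-1}^*]$ is a strictly stronger property than what the proposition needs and than what the paper proves, so even if it is true it requires work you have not done, and it is not obvious that it holds for every admissible $\bar G$ (your exponential computation gives $1+\lambda s_{\ell-1}^*+\lambda^2 (s_{\ell-1}^*)^2$ in the numerator, not a fixed $1+\lambda^2+\lambda^4$, and that simplification relies on $r_{\ell-1}$ being constant). To repair the proposal you would either have to carry out the convexity estimate in full generality --- and you should expect to need the $\bar G^2$-H\"older bound or a similar device --- or, more economically, drop the convexity goal and instead estimate $\Phi_\ell'$ only at fixed points, as the paper does.
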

Now, fix $d = 2$,  $\lambda \in (0,1)$ and a service distribution that satisfies
Assumptions \ref{ass-G} and \ref{ass-G2}.
Let $(\ssl)_{\ell\in \N}$ be the  invariant
queue length distribution, which is uniquely defined by Proposition \ref{th-uniqueness}.  For $\ell\geq 2$, given $\{\ss_i\}_{i\leq \ell-1}$ and $\{r_i\}_{i\leq \ell-1}$, and $F_\ell$ as in \eqref{rem-s-fixed}, we compute $\ssl$ as the zero of the function $s\mapsto F_\ell(s)-s$ on the interval $[0,\sslm]$ using the bisection method with an error tolerance of $10^{-12}$. We then use the recursion \eqref{def-rl-rec} using the adaptive quadrature method with a tolerance of $10^{-12}$ to numerically
approximate the integrals therein.

      The resulting numerical estimates
    for $\lambda = 0.5$ and a variety of distributions (see Appendix \ref{distribution} for details) are tabulated in  
    Table \ref{tab:table-sl}, where $s_\ell^*$ is set to zero if the numerical estimate for $s_\ell^*$ is below the precision error, i.e., $s_\ell^*\leq 10^{-12}$.
    Continuing the numerical computation for  $6\leq\ell\leq 8$ for the above distributions, the only non-zero values are $s_6^*=2.7\times 10^{-6}$, and $s_7^*=1.8147\times 10^{-11}$ 
    for the Weibull distribution with $a=0.5$, and $s_6^*=0.0231$, $s_7^*=0.0182$ and $s_8^*=0.015$ for the Pareto distribution with $\alpha=1.5$.
   Even for small values of $\ell$ the decay of $\ssl$ appears to be
      faster for lighter-tailed  as opposed to heavier-tailed distributions
      (such as  Weibull with  $\alpha = 0.5$, and Pareto  with
      $\alpha = 1.5$). In Section \ref{sec-conv-inv}, we show that these approximations to the invariant state agree well with the approximations to the limiting stationary distribution obtained via simulations.

\begin{table}[h!]
\begin{center}
\begin{tabular}{|c|c|c|c|c|c|c|c|c|}
\hline
&$s_2^*$&$s_3^*$&$s_4^*$&$s_5^*$\\
\hline
Exponential &$0.125$&$0.0078$&$3.0518\times 10^{-5}$&$4.6566 \times 10^{-10}$ \\
\hline
Gamma &$0.0871$& $0.0022$ &$1.1892\times 10^{-6}$&$3.629\times10^{-11}$\\
($\alpha=3$)&&&&\\
\hline
Weibull&$0.0838$&$0.0018$&$7.1889\times 10^{-7}$&$0$\\
($a=2$)&&&&\\
\hline
Weibull &$0.2425$&$0.0872$&$0.0166$&$0.0008$\\
($a=0.5$)&&&&\\
\hline
Lognormal&$0.0738$&$0.0012$&$2.4229\times 10^{-7}$&$2.985\times 10^{-11}$\\
($\sigma=\frac{1}{3}$)&&&&\\
\hline
 Pareto&$0.0812$&$0.0024$&$1.2208\times 10^{-5}$&$0$\\
 ($\alpha=3$)&&&&\\
 \hline
  Pareto&$0.1820$&$0.0797$&$0.0460$&$0.0311$\\
  ($\alpha=1.5$)&&&&\\
  \hline
  Burr&$0.1117$&$0.0076$&$9.8831\times 10^{-5}$&$1.1969\times 10^{-7}$\\
($c=2$, &&&&\\
 $k=1.5$)&&&&\\
\hline
 \end{tabular}
 \caption{Numerical estimates for $s_\ell^*$, $2\leq\ell\leq 5$,  for various distributions.}
 \label{tab:table-sl}
 \end{center}
 \end{table}

To understand the rate of decay of the invariant queue length with respect to $\ell$, 
Table \ref{tab:table-h}  tabulates
the function $H(\ell)={\log_2(\log({1}/{s_\ell^*}))}/{\ell}$
for the above distributions, where the entry NaN denotes that $H(\ell)$
could not be computed due to loss of precision while estimating $s_\ell^*$.  
For service distributions exhibiting a doubly exponential tail decay, by Theorem \ref{thm-dec} 
  $H(\ell)$ should converge to (or be lower bounded by) a constant as $\ell \rightarrow \infty$. For example, it is easy to check that for the exponential service distribution $H(\ell)$ converges to 1 as $\ell\rightarrow\infty$.
   This assertion cannot be inferred from Table \ref{tab:table-h}, thus showing
  that the decay rate does not manifest itself till far into the tail.   Thus, from a practical point of view, this underscores the importance of being able to compute $s_\ell^*$ for finite $\ell$, as our approach
  allows.  
 
\begin{table}[ht!]
\begin{center}
\begin{tabular}{|c|c|c|c|c|c|c|c|c|}
\hline
&$H(2)$&$H(3)$&$H(4)$&$H(5)$\\
\hline
Exponential&$0.5281$&$0.7595$&$0.8445$&$0.8851$\\
\hline
Gamma&$0.6436$&$0.8724$ &$0.9425$&$0.9688$\\
($\alpha=3$)&&&&\\
\hline
Weibull&$0.6549$&$0.8859$ &$0.9556$&NaN\\
($a=2$)&&&&\\
\hline
Weibull &$0.2513$&$0.4289$&$0.5085$&$0.5650$\\
($a=0.5$)&&&&\\
\hline
Lognormal &$0.6911$&$0.9182$&$0.9821$&$0.9198$\\
($\sigma=\frac{1}{3}$)&&&&\\
\hline
Pareto &$0.6640$&$0.8629$&$0.8878$&NaN\\
($\alpha=3$)&&&&\\
\hline
Pareto&$0.3843$&$0.4462$&$0.4057$&$0.3590$\\
($\alpha=1.5$)&&&&\\
\hline
Burr&$0.5562$&$0.7621$&$0.8013$&$0.7989$\\
($c=2$, &&&&\\
$k=1.5$)&&&&\\
\hline
\end{tabular}
\caption{Computations of $H(\ell)=\frac{\log_2(\log(\frac{1}{s_\ell^*}))}{\ell}$  for $s_\ell^*$, $2\leq\ell\leq 5$.}
\label{tab:table-h}
\end{center}
\end{table}
 Due to the measure-valued representation, 
it is also possible to calculate the invariant states of 
other important performance measures in addition to the queue length.  
For example, consider the virtual waiting time at any time $t$, 
defined to be the amount of time that a 
virtual job hypothetically arriving at time $t$ would have to wait before entering service.  
Using \cite[Theorem 4.5]{AghLiRam15}, it can be easily deduced that
when $d=2$, the invariant mean virtual waiting time, $W^*$ is given by 
\begin{align}
\label{wait}
W^*=&\sum_{\ell\geq 2}Z_\ell(0)^2+\sum_{\ell\geq 1}[Z_\ell(0)+Z_{\ell+1}(0)]\int_0^\infty[Z_\ell(r)+Z_{\ell+1}(r)]dr,
\end{align}
where $Z_\ell(r):=\langle ({\bar{G}(\cdot+r)}/{\bar{G}(\cdot)}),\nu_\ell^*\rangle.$ We now compute the invariant mean virtual waiting time by using the following approximation to \eqref{wait}.

\begin{align*}
W^*=&\sum_{\ell= 2}^{L_0}Z_\ell(0)^2+\sum_{\ell= 1}^{L_0-1}[Z_\ell(0)+Z_{\ell+1}(0)]\sum_{j=0}^{\lfloor R_0/\delta\rfloor}[Z_\ell(r_j)+Z_{\ell+1}(r_j)]\delta,
\end{align*}
where $L_0=6$, $R_0=20$, $\delta=0.003$ and $r_j=j\delta$, with a tolerance of $10^{-12}.$\\
\begin{table}[h]
\begin{center}
\begin{tabular}{|c|c|c|c|c|c|c|}
\hline
& Exponential &Pareto&Pareto&Pareto&Pareto&Pareto\\
&&($\alpha=3$)&($\alpha=2.5$)&($\alpha=2$)&($\alpha=1.75$)&($\alpha=1.5$)\\
\hline
$W^*$&$ 0.4246$&$0.1673$&$0.1905$&$0.2449$&$0.3042$&$0.4287$\\
\hline
\end{tabular}
\caption{Invariant mean virtual waiting time for  Exponential and Pareto distributions.}
\end{center}
\end{table}
\begin{table}[h]
\begin{center}
\begin{tabular}{|c|c|c|c|c|c|}
\hline
&Weibull&Weibull&Weibull&Weibull&Gamma\\
&($a=2$)&($a=1.5$)&($a=1$)&($a=0.5$)&($\alpha=3$)\\
\hline
$W^*$&$0.1716$&$0.1966$&$0.2661$&$0.6781$&$0.1789$\\
\hline
\end{tabular}
\caption{Invariant mean virtual waiting time for Weibull and Gamma distributions.}
\end{center}
\end{table}

The numerics show that for a given distribution,
the invariant mean virtual waiting time increases with increase in the heaviness of  the tail of the distribution.
While this may appear intuitive, it is not completely obvious, since it is in contrast to  transient time results obtained in \cite[Section 5.2.2]{AghLiRam15}, where the relaxation time (defined to be the time it takes for the network backlog to decrease to the extent that the mean virtual waiting time reaches half of its initial value) is smaller in the heavy-tailed case when compared to the light-tailed case.

\subsection{Potential convergence of the $N$-server equilibria to the invariant state}
\label{sec-conv-inv}
     
  As mentioned in Section \ref{sec-background}, for service distributions with decreasing hazard rate function, it was shown in \cite{BraLuPra12} that the sequence of stationary distributions $\pi_N$ of $N$-server systems subject to our load balancing algorithm converges to a limit $\pi^*$ and that in the case of power law distributions with tail parameter $\beta$, this limit has a doubly exponential tail if and only if  $\beta>d/(d-1)$. They use a different approach not involving the study of dynamic behavior (or the hydrodynamic limit). 

An alternative approach is to characterize the hydrodynamic limit and show that the unique invariant state of the hydrodynamic equations coincides with the limiting stationary distribution. The results of this paper, along with that of \cite{AghRam17} and \cite{BraLuPra12} can be combined to show that this is true in the case of service distributions with decreasing hazard rate. We conjecture that the unique physical invariant state coincides with the limiting stationary distribution even for more general service distributions. Below, we provide numerical evidence to support this conjecture.

We only present results for the lognormal and Pareto distributions. Note that, while the Pareto distribution has a decreasing hazard rate function, and so one expects agreement of the two quantities due to \cite{AghRam17,BraLuPra12}, it is significant that the lognormal distribution still shows agreement despite not having a monotonic hazard rate function. Similar trends are observed for a variety of other service distributions and arrival rates.
Figures \ref{fig1} and \ref{fig2} below
present a comparison, for $\ell = 1, 2, 3$,
of $\ssl$ with   estimates of the probability that a typical queue has length no less than $\ell$ at time $t$, using 
Monte Carlo simulations of an $N$-server network with $N=600$ and $\lambda =0.5$ for the time interval $t=[0,15]$ for the Lognormal distribution, and $t=[0,10]$ for the Pareto distribution, using  600 realizations, with the initial condition that at time $t=0$, every server has exactly one job with age $0$. We then compute the probability that a typical queue has length $\geq\ell$ at time $t$, and compare this quantity with the invariant state $s_\ell^*$ obtained in Table \ref{tab:table-sl}. For large $\ell$ (e.g., $\ell \geq 4$) it becomes harder to use simulations to get accurate estimates      of the equilibrium probability of the queue exceeding $\ell$; whereas, the recursions can still be used to 
   get an approximation to $s_\ell^*$.  Figure \ref{fig1} confirms the results for the Lognormal distribution and Figure \ref{fig2} for the Pareto distribution and  provides
   support for the conjecture (similar trends are observed for a variety of initial conditions and service distributions.)
   
\vspace{-4pt}
 \begin{figure}[h!]
\begin{subfigure}{0.3\textwidth}
      \centering
      \includegraphics[width=0.9\textwidth]{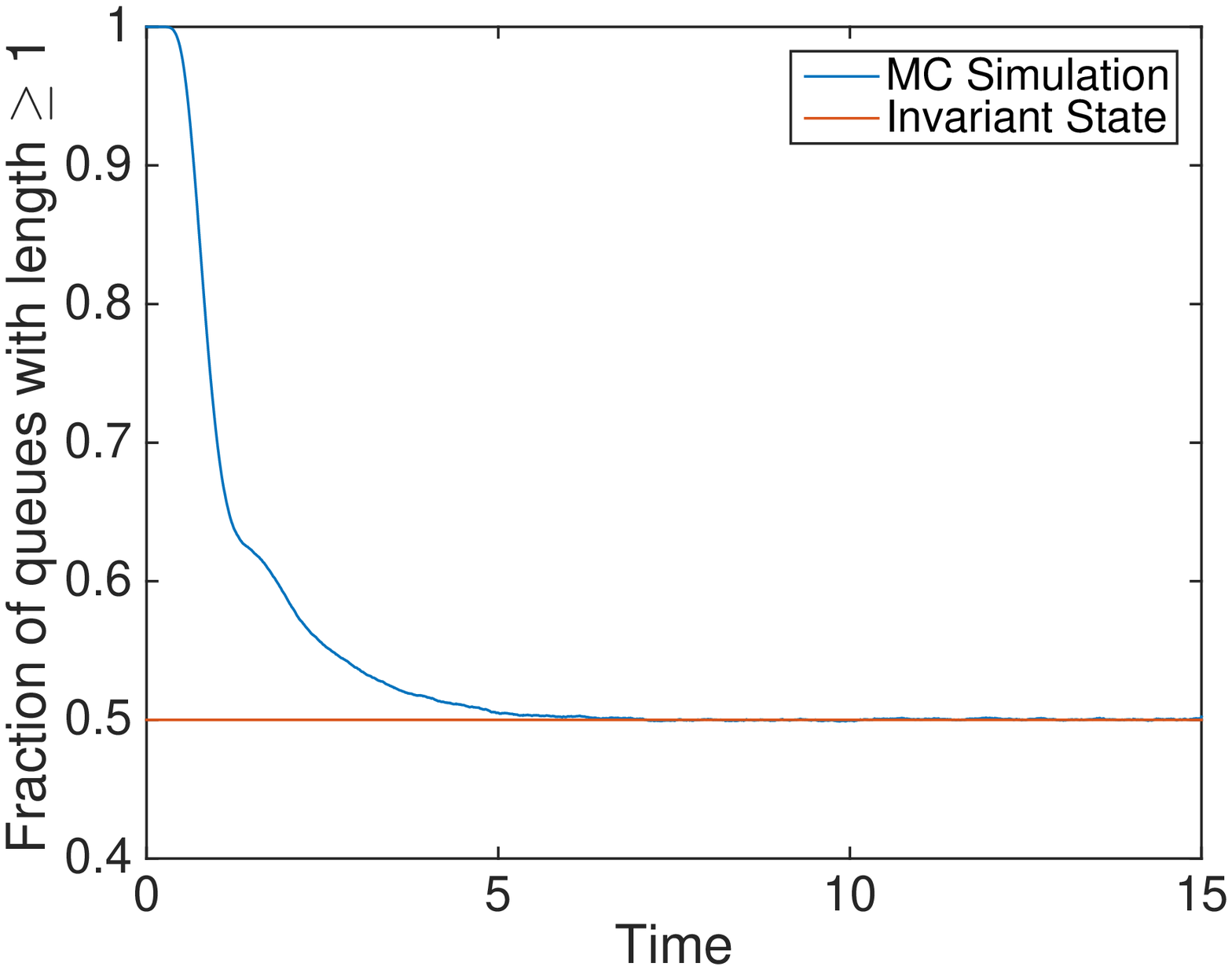} 
     \caption{$\ell=1$}
      \end{subfigure}
     \begin{subfigure}{0.3\textwidth}
       \centering
       \includegraphics[width=0.9\textwidth]{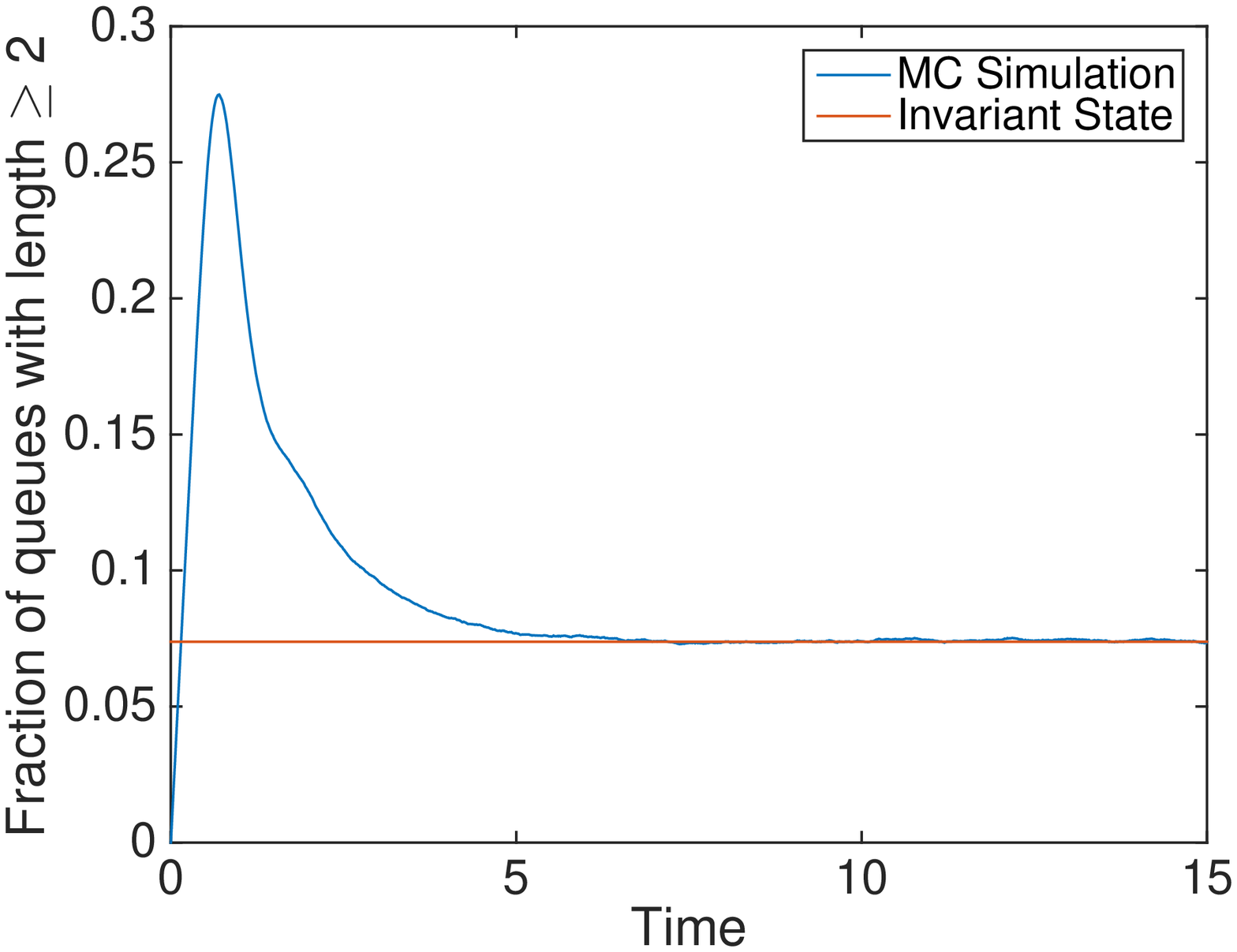} 
       \caption{$\ell=2$}
        \end{subfigure}
       \begin{subfigure}{0.3\textwidth}
       \centering
       \includegraphics[width=0.9\textwidth]{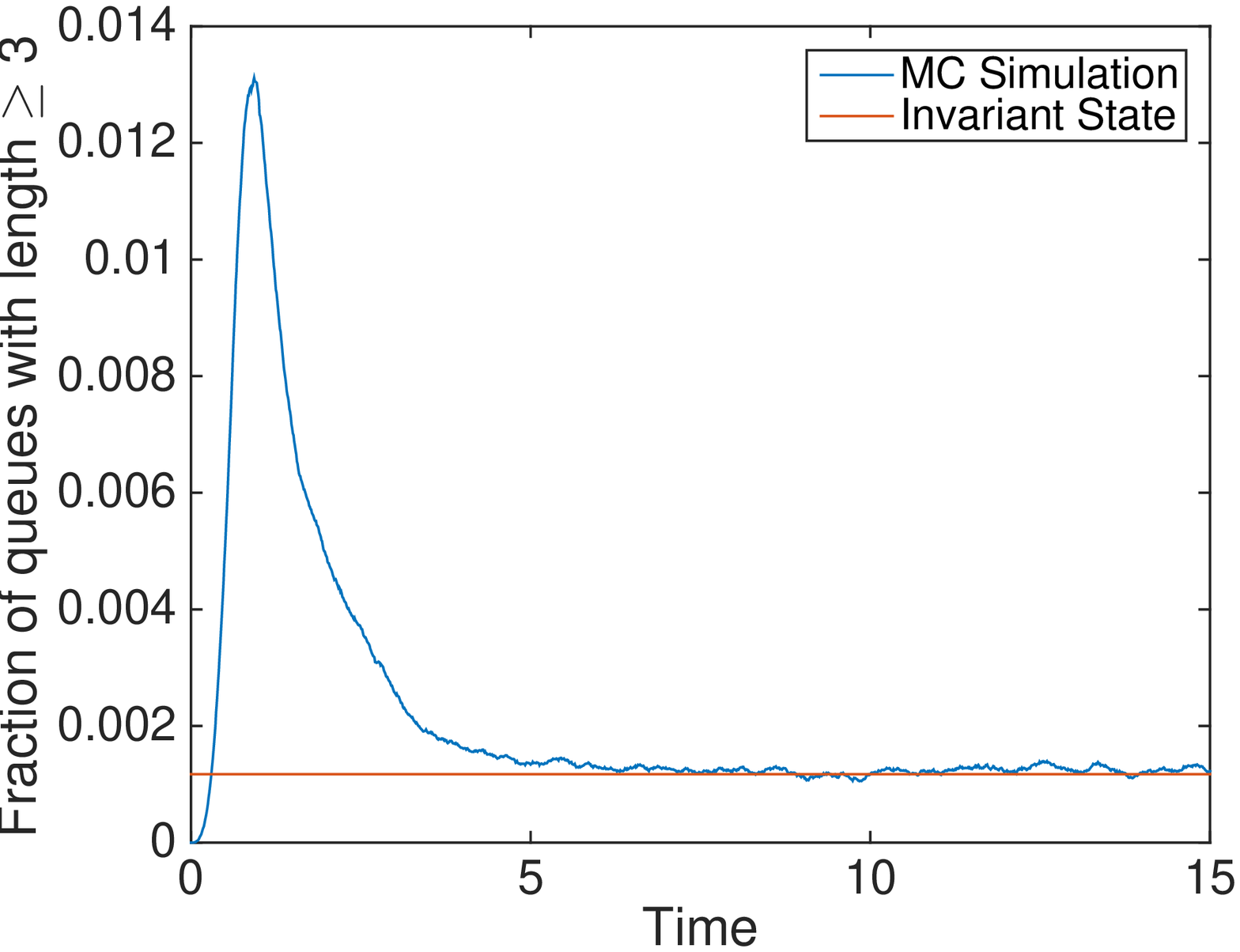} 
       \caption{$\ell=3$} 
 \end{subfigure}
  \caption{Comparison of the probability of the queue exceeding $\ell$ obtained 
    from MC simulation with the numerical approximation to the invariant state $s_\ell^*$ for the Lognormal distribution with $\sigma=1/3$}
  \label{fig1}
  \end{figure}

 \begin{figure}[h]
\begin{subfigure}{0.3\textwidth}
      \centering
      \includegraphics[width=0.9\textwidth]{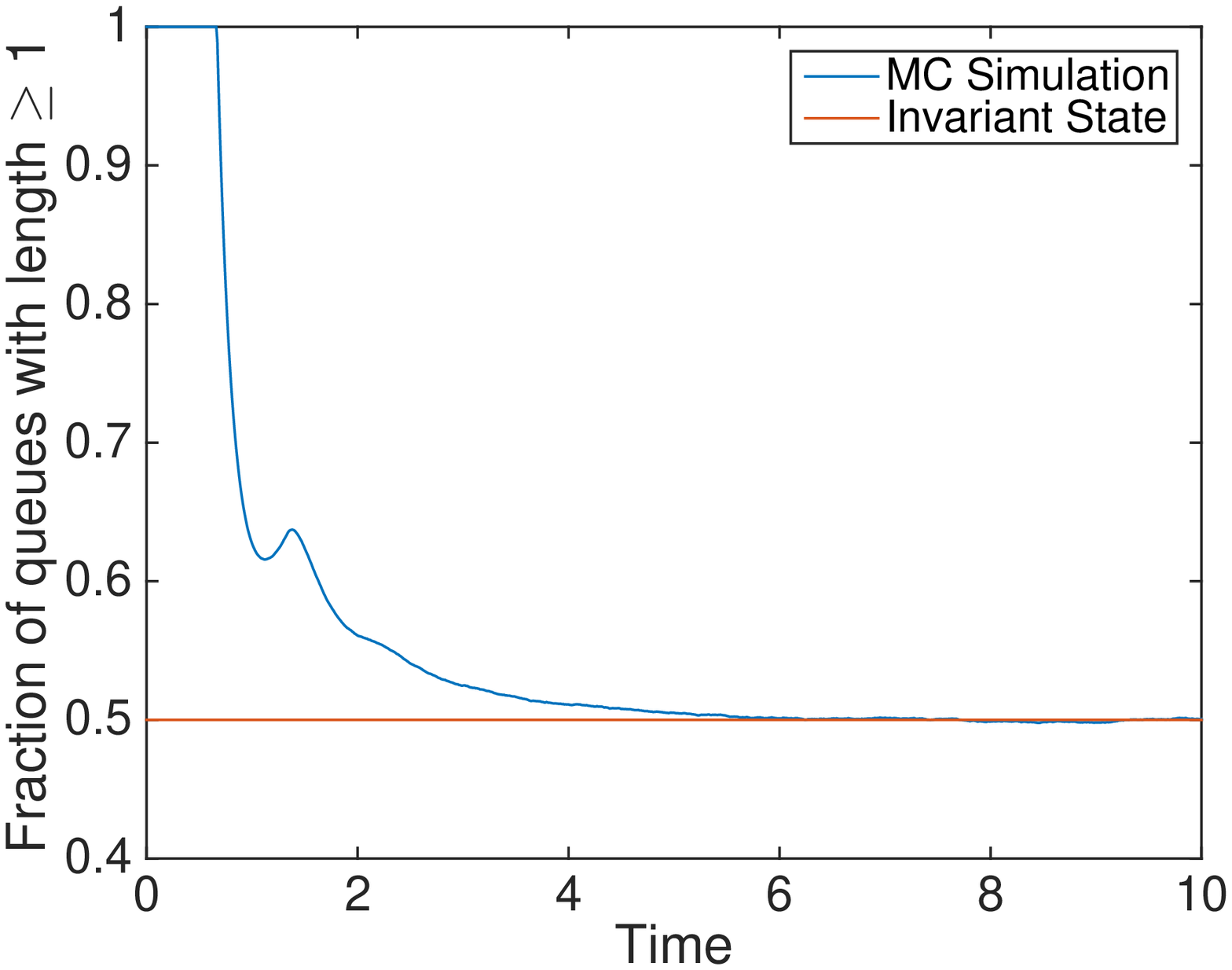} 
     \caption{$\ell=1$}
    \end{subfigure}
   \begin{subfigure}{0.3\textwidth}
       \centering
       \includegraphics[width=0.9\textwidth]{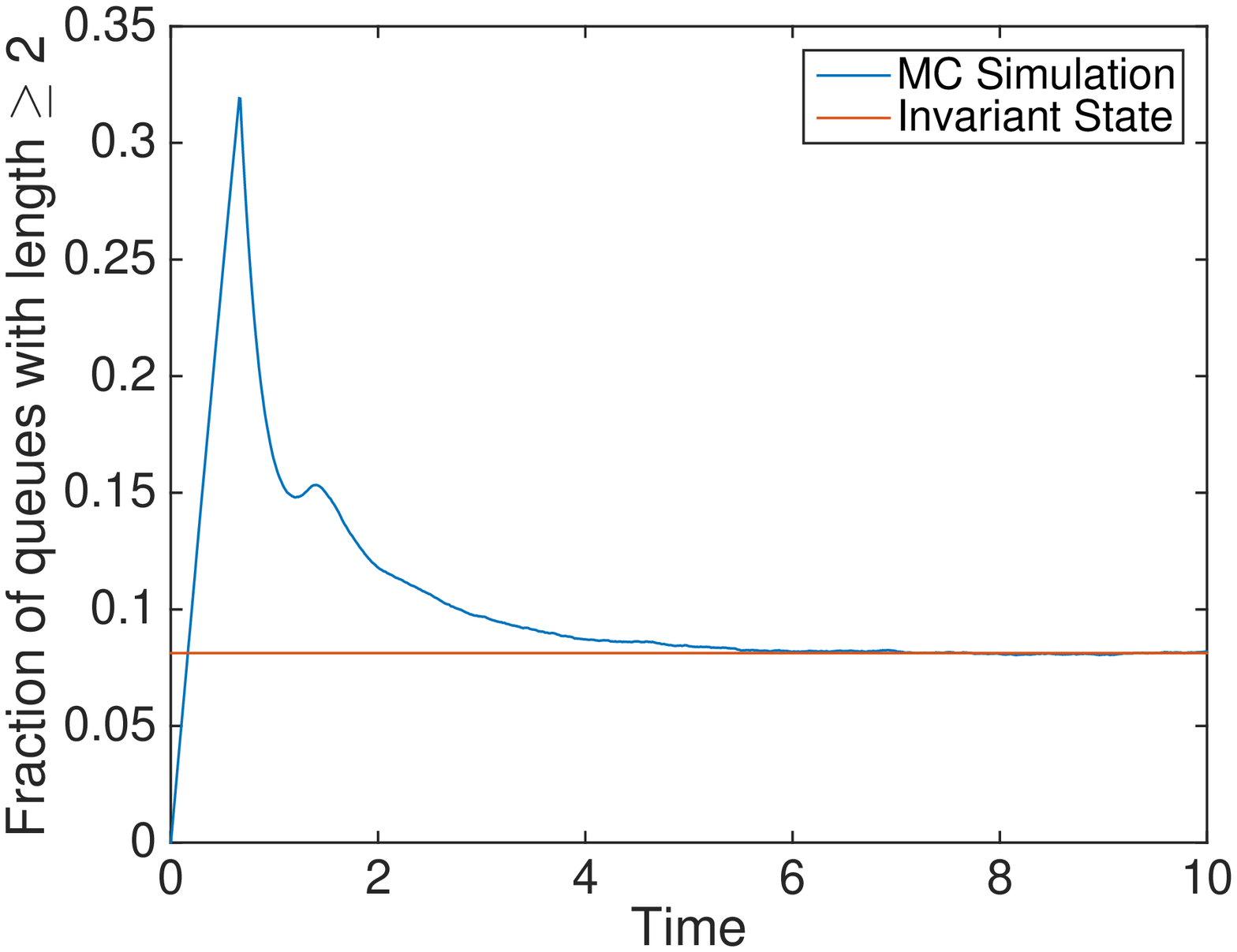} 
       \caption{$\ell=2$}
\end{subfigure}
  \begin{subfigure}{0.3\textwidth}
       \centering
       \includegraphics[width=0.9\textwidth]{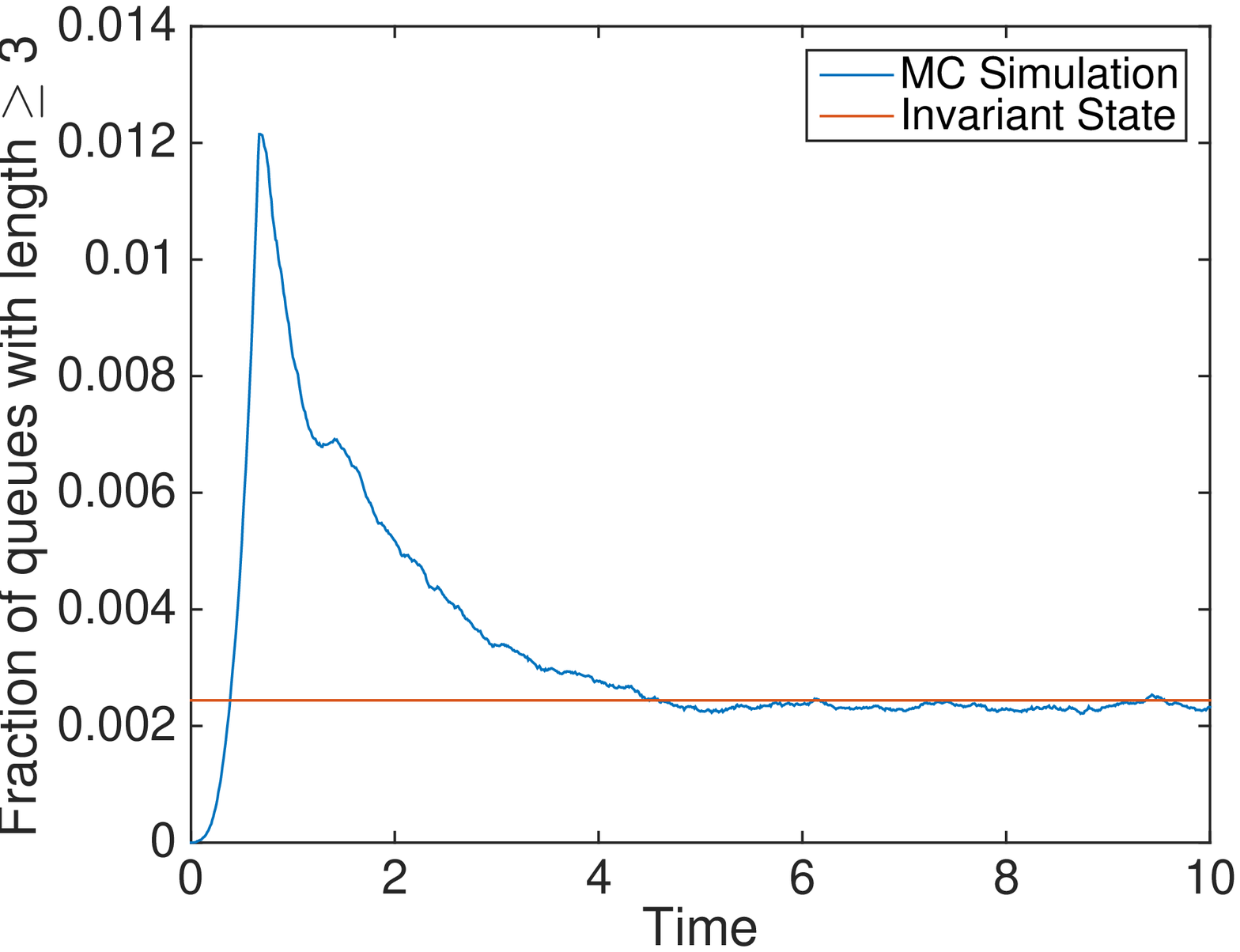} 
       \caption{$\ell=3$}
 \end{subfigure}
  \caption{Comparison of the probability of the queue exceeding $\ell$ obtained 
    from MC simulation with the numerical approximation to the invariant state$s_\ell^*$ for the Pareto distribution with $\alpha=3$}
  \label{fig2}
  \end{figure}

\section{Proof of Theorem \ref{th-reduction}}
\label{proof-thm}
Let $\nu^*=(\nu_\ell^*)_{\ell\in \N}$ be a physical invariant state of the hydrodynamic equations in the sense of
Remark \ref{rem-invstate}. 
Then (by definition)
$\nu(t)=(\nu_\ell^*)_{\ell\in \N}$, $t> 0$, is a solution to the hydrodynamic equations with initial condition $\nu^*$, and
condition \eqref{eq-tailcond} holds with $(\ssl)_{\ell\in \N}$ defined as in \eqref{def-ssl}. 
Define the associated invariant quantities 
\begin{equation}
\label{nu*}
\eta_\ell^*:=\left\{\begin{array}{ll}
\lambda(1-\langle\f1,\nu_1^*\rangle^d)\delta_0&\quad\text{if }\ell=1,\\
\lambda\mathfrak{P}_d\left(\langle\f1,\nu_{\ell-1}^*\rangle,\langle\f1,\nu_\ell^*\rangle\right)(\nu_{\ell-1}^*-\nu_\ell^*)&\quad\text{if }\ell\geq 2.
\end{array}\right.
\end{equation}
From \eqref{Fluid_D}, it follows that for every  $t\geq 0$, the departure
process with initial state $\nus$ satisfies 
\begin{equation}
\label{d*}
D_{\ell}(t)=\langle h,\nu_\ell^*\rangle t, \qquad\forall \ell\in\N,
\end{equation}
and using \eqref{Fluid_R}, the measure-valued routing process is given by
$\eta_\ell(t)=\eta_\ell^*$ for all $t\geq 0$.

The proof of Theorem \ref{th-reduction}, which is presented in Section \ref{pf-mainresult},
relies on several preliminary results. 
First, in Section \ref{subs-deprate}, we obtain a bound on the 
departure rate in an invariant state,  in Section \ref{sec-pfred} we obtain a useful characterization
of an invariant state (see Proposition \ref{prop-reduction}), and in
Section \ref{sec-exi} we establish existence of a physical invariant state.

\subsection{The departure rate in an invariant state}
\label{subs-deprate}

\begin{lemma}
\label{lem-hus}
Suppose $\nus = (\nu_\ell^*)_{\ell \in \N}$ is an
invariant state of the corresponding hydrodynamic equations, and let 
$\ssl = \langle \f1, \nu_\ell^*\rangle$ be the corresponding queue length distribution. 
 Then, for every $\ell \geq 1$,
\begin{equation}
\label{eq-hlambda}
\langle h, \nuslp\rangle = \lambda (\ssl)^d- \lambda  + \langle h, \nus_1\rangle.
\end{equation}
Moreover if  \eqref{eq-tailcond} holds, that is, $\ssl \rightarrow 0$ as $\ell \rightarrow \infty$, then
\begin{equation}
\label{hnu1-ineq}
\langle h, \nus_1\rangle\geq\lambda.
\end{equation}
\end{lemma}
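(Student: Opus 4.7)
The plan is to extract \eqref{eq-hlambda} from the mass balance \eqref{Fluid_Balance} specialized to the invariant state, and then deduce \eqref{hnu1-ineq} by sending $\ell\to\infty$ and using non-negativity of $h$.

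First, since $\nu(t)\equiv\nus$ solves the hydrodynamic equations, the left-hand side of \eqref{Fluid_Balance} vanishes identically for every $t\geq 0$ and every $\ell\in\N$. Using \eqref{d*} for the departure terms and the fact that $\eta_\ell(s)=\eta_\ell^*$ for all $s\geq 0$, the mass balance becomes, after dividing through by $t>0$,
\begin{equation*}
\langle h,\nuslp\rangle \;-\; \langle h,\nusl\rangle \;+\; \langle\f1,\eta_\ell^*\rangle \;=\; 0,\qquad \ell\in\N.
\end{equation*}
Next I would compute $\langle\f1,\eta_\ell^*\rangle$ using \eqref{nu*}. For $\ell=1$, it equals $\lambda(1-(s_1^*)^d)$; for $\ell\geq 2$, invoking the identity $\mathfrak{P}_d(x,y)(x-y)=x^d-y^d$, it equals $\lambda\bigl((\sslm)^d-(\ssl)^d\bigr)$. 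Substituting these expressions into the displayed relation yields the one-step recursion
\begin{equation*}
\langle h,\nuslp\rangle \;=\; \langle h,\nusl\rangle \;+\; \lambda(\ssl)^d \;-\; \lambda(\sslm)^d \qquad (\ell\geq 2),
\end{equation*}
with the base case $\langle h,\nus_2\rangle=\langle h,\nus_1\rangle+\lambda(s_1^*)^d-\lambda$ coming from $\ell=1$. Telescoping from $1$ to $\ell$ (the intermediate $(\sslm)^d$ terms cancel pairwise) produces \eqref{eq-hlambda}.

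For the second statement, note that $h\geq 0$ on $[0,L)$ and $\nuslp\in\mathbb{M}_{\leq 1}[0,L)$ imply $\langle h,\nuslp\rangle\geq 0$. Rearranging \eqref{eq-hlambda} gives
\begin{equation*}
\langle h,\nus_1\rangle \;\geq\; \lambda\bigl(1-(\ssl)^d\bigr) \qquad \text{for every }\ell\in\N.
\end{equation*}
Letting $\ell\to\infty$ and invoking the tail condition \eqref{eq-tailcond} yields \eqref{hnu1-ineq}.

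There is no serious obstacle here: the content is purely algebraic once the invariant-state dynamics have been reduced via \eqref{d*} and \eqref{nu*}. The one point that needs care is the base case $\ell=1$, where $\eta_1^*$ has a different form than $\eta_\ell^*$ for $\ell\geq 2$; checking that the recursion still reads $\langle h,\nus_2\rangle-\langle h,\nus_1\rangle=\lambda(s_1^*)^d-\lambda$ is what anchors the telescoping argument.
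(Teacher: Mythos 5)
Your proposal is correct and essentially matches the paper's proof: both start from the mass balance \eqref{Fluid_Balance} with $\nu(t)\equiv\nus$, use \eqref{d*} and \eqref{nu*} together with the algebraic identity $\mathfrak{P}_d(x,y)(x-y)=x^d-y^d$ to obtain the one-step recursion for $\langle h,\nusl\rangle$ (handling $\ell=1$ separately as the base case), telescope to get \eqref{eq-hlambda}, and conclude \eqref{hnu1-ineq} from non-negativity of $\langle h,\nuslp\rangle$ and the tail condition \eqref{eq-tailcond}.
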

\begin{proof}
Let $\nus = (\nu_\ell^*)_{\ell\in \N}$ be an
invariant state of the hydrodynamic equations. Then, 
 for every $\ell \geq 1$, \eqref{Fluid_bound} implies $\langle h, \nu_\ell^* \rangle < \infty$, and  \eqref{Fluid_Balance}, \eqref{nu*} and \eqref{d*} imply that 
\begin{equation}
\label{stat-hnu2}
\langle h, \nus_2\rangle - \langle h, \nus_1\rangle =- \lambda ( 1 - (\ss_1)^d), 
\end{equation}
and for $\ell \geq 2$, 
\begin{equation}
\label{stat-hnuls}
\langle h, \nuslp\rangle - \langle h, \nu_\ell^* \rangle= - \lambda \mathfrak{P}_d\Big( \sslm, \ssl\Big) (\sslm - \ssl)=-\lambda[(\sslm)^d - (\ssl)^d],
\end{equation}
where the last equality invokes \eqref{def_poly}.
Thus, defining $\ss_0 := 1$, we see that for $\ell \geq 1$, 
\begin{align}
\label{hnu-rel}
\langle h, \nuslp\rangle  & =  \sum_{i=1}^{\ell} \left(  \langle h, \nus_{i+1}
\rangle - \langle h, \nus_i \rangle \right) + \langle h, \nus_1
\rangle\nonumber \\
& =  -\lambda \sum_{i=1}^\ell  \left( (\ss_{i-1})^d - (\ss_{i})^d \right) + \langle
h, \nus_1 \rangle\nonumber \\
& =  \lambda (\ssl)^d - \lambda  + \langle h, \nus_1
\rangle,
\end{align}
which proves \eqref{eq-hlambda}. Furthermore, 
since $\langle h, \nus_{\ell+1} \rangle \geq
0$ for all $\ell$, and  $\lim_{\ell\rightarrow\infty}\ssl  =  0$,  \eqref{hnu-rel} implies \eqref{hnu1-ineq}.
\end{proof}

  \subsection{Characterization of  invariant states}
  
\label{sec-pfred}
We begin by stating a result establishing absolute continuity of the invariant state. Its  proof is relegated to Appendix \ref{subs-abscont}.
\begin{lemma}
\label{lem-abscont}
Let $\nu^*=(\nu_\ell^*)_{\ell\in \N}\in\mathbb{S}$ be an invariant state of the hydrodynamic equations. Then for every ${\ell\in \N}$, $\nu_\ell^*$ is absolutely
continuous with respect to Lebesgue measure on $[0,\infty)$. 
\end{lemma}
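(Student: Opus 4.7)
The plan is to derive an explicit density representation for each $\nu_\ell^*$ by exploiting the fact that, when evaluated at an invariant state, the measure-valued equation \eqref{Fluid_f} simplifies substantially. I will fix a test function $f\in\mathbb{C}_b[0,\infty)$ whose support is compactly contained in $[0,L)$, say $\text{supp}(f)\subset [0,T]$ for some $T<L$. Substituting $\nu_\ell(t)=\nu_\ell^*$, $\eta_\ell(s)=\eta_\ell^*$ from \eqref{nu*}, and $D_\ell(s)=s\langle h,\nu_\ell^*\rangle$ from \eqref{d*} into \eqref{Fluid_f}, and changing variables $u=t-s$ in the two time integrals, yields for every $t\geq 0$,
\begin{align*}
\langle f, \nu_\ell^*\rangle &= \left\langle f(\cdot+t)\frac{\bar G(\cdot+t)}{\bar G(\cdot)}, \nu_\ell^* \right\rangle + \int_0^t f(u)\bar G(u)\,\langle h,\nu_{\ell+1}^*\rangle\, du \\
&\quad + \int_0^t \left\langle f(\cdot+u)\frac{\bar G(\cdot+u)}{\bar G(\cdot)}, \eta_\ell^*\right\rangle du.
\end{align*}
Since $\nu_\ell^*$ is supported in $[0,L)$ and $f(x+t)=0$ whenever $t>T$, the first term vanishes for $t$ large. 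Letting $t\to\infty$ then gives an exact identity.

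For $\ell=1$, I substitute $\eta_1^*=\lambda(1-(s_1^*)^d)\delta_0$ directly to obtain
\[
\langle f, \nu_1^*\rangle = \int_0^\infty f(u)\bar G(u)\left[\langle h,\nu_2^*\rangle+\lambda(1-(s_1^*)^d)\right]du = \langle h,\nu_1^*\rangle \int_0^\infty f(u)\bar G(u)\, du,
\]
where the second equality uses \eqref{eq-hlambda} with $\ell=1$ from Lemma \ref{lem-hus}. For $\ell\geq 2$, I use that $\nu_{\ell-1}^*-\nu_\ell^*$ is a positive measure (by the ordering built into $\mathbb{S}$), so $\eta_\ell^*=\lambda\mathfrak{P}_d(\sslm,\ssl)(\nu_{\ell-1}^*-\nu_\ell^*)$ is positive. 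Fubini's theorem is then applicable in the double integral because $|f|\leq \|f\|_\infty$, $\bar G(x+u)/\bar G(x)\leq 1$, and the compact support of $f$ restricts the effective range of integration to a bounded region. After switching the order of integration, substituting $v=x+u$, and applying Fubini a second time, I obtain
\[
\int_0^\infty \left\langle f(\cdot+u)\frac{\bar G(\cdot+u)}{\bar G(\cdot)}, \eta_\ell^*\right\rangle du = \lambda\mathfrak{P}_d(\sslm,\ssl)\int_0^\infty f(v)\bar G(v)\rho_\ell(v)\,dv,
\]
where $\rho_\ell(v):=\int_{[0,v]}\frac{1}{\bar G(x)}(\nu_{\ell-1}^*-\nu_\ell^*)(dx)$, which is finite for $v<L$ since $1/\bar G$ is bounded on $[0,v]$. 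Combining everything gives
\[
\langle f, \nu_\ell^*\rangle = \int_0^\infty f(v)\bar G(v)\left[\langle h,\nu_{\ell+1}^*\rangle + \lambda\mathfrak{P}_d(\sslm,\ssl)\rho_\ell(v)\right]dv.
\]

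Since this identity holds for every $f\in\mathbb{C}_b[0,\infty)$ with compact support in $[0,L)$, and such $f$ form a measure-determining class on $[0,L)$ (where $\nu_\ell^*$ is supported), I conclude that $\nu_\ell^*$ is absolutely continuous with respect to Lebesgue measure, with density $v\mapsto \bar G(v)r_\ell(v)$ where $r_\ell(v)=\langle h,\nu_{\ell+1}^*\rangle + \lambda\mathfrak{P}_d(\sslm,\ssl)\rho_\ell(v)$ (and $r_1\equiv \langle h,\nu_1^*\rangle$). The main technical nuisance is justifying Fubini and the vanishing of the ``initial condition'' term without assuming the tail condition \eqref{eq-tailcond}; restricting attention to test functions with compact support in $[0,L)$ circumvents both issues cleanly and still yields a measure-determining family.
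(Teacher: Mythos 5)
Your proof is correct and takes a genuinely different route from the paper's. The paper first extends the hydrodynamic identity \eqref{Fluid_f} to Lebesgue-integrable test functions (a separate preliminary lemma), then substitutes indicators $\mathbbm{1}_{[a,b]}$ to derive, by induction on $\ell$, a Lipschitz-type bound $\nu_\ell^*[a,b]\leq A_\ell(b-a)$, from which absolute continuity follows via the $\varepsilon$--$\delta$ characterization. You instead exploit test functions with compact support in $[0,L)$: the initial-condition term drops out \emph{exactly} once $t$ exceeds the support bound, so no limit argument on $\frac{\bar G(\cdot+t)}{\bar G(\cdot)}$ is required, and the two Fubini swaps are justified because the integrand is supported on a bounded region where $1/\bar G$ is bounded and $\nu_{\ell-1}^*-\nu_\ell^*$ is a finite positive measure (both facts available without circularity). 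This directly produces the representation $\langle f,\nu_\ell^*\rangle = \int_0^L f(v)\,\bar G(v)\,r_\ell(v)\,dv$ with $r_\ell(v)=\langle h,\nu_{\ell+1}^*\rangle + \lambda\mathfrak{P}_d(\sslm,\ssl)\rho_\ell(v)$, where $\rho_\ell$ is finite and nondecreasing on $[0,L)$, so $\bar G\cdot r_\ell$ is locally integrable; since $\nu_\ell^*$ is a finite Borel measure on the Polish space $[0,L)$ (hence Radon) and compactly supported continuous functions determine Radon measures, the identity pins down $\nu_\ell^*$ as absolutely continuous with that density. Your approach buys two things: it avoids the extension of the test-function class, and it yields the density formula as a byproduct---the same formula that the paper rederives separately in Proposition \ref{prop-reduction} (where, once absolute continuity is known, $\rho_\ell(v)=I_{\ell-1}(v)-I_\ell(v)$). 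The paper's Lipschitz-bound route gives quantitative modulus-of-continuity information, but those constants $A_\ell$ are never used downstream, so nothing is lost by your shortcut. One small polish: state explicitly that $\nu_\ell^*$ is Radon (immediate from finiteness on a Polish space) so the appeal to the measure-determining property of $C_c[0,L)$ is airtight.
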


\begin{proposition}
\label{prop-reduction}
Suppose $\nus = (\nu_\ell^*)_{\ell\in \N}$ is an invariant state of the hydrodynamic equations, and
$\ssl$ is defined in terms of $\nu_\ell^*$ via \eqref{def-ssl} for each $\ell \geq 1$.   Then there exists a sequence $(\vsl)_{\ell\in \N}$, of measurable functions on $[0,\infty)$ that admit a version that is continuously differentiable on $[0,L)$,  such that  $r_\ell(x)=0$ for $x\notin [0,L)$,  \eqref{def-vsl} and \eqref{eqn-ssl} hold, 
\begin{equation}
\label{def-r1}
r_1\equiv s_1^*,\qquad \text{on }[0,L),
\end{equation}
and for  $\ell\geq 2$, and  $x\in[0,L)$,
\begin{align}
\label{vsl2}
    \vsl (x)   = & (\lambda (\ssl)^d+s_1^*-\lambda) e^{-\lambda \mathfrak{P}_d( \sslm, \ssl) x} + \lambda  \mathfrak{P}_d( \sslm, \ssl)\int_0^x   e^{-\lambda  \mathfrak{P}_d( \sslm, \ssl) (x-u)} r_{\ell-1} (u)
      du.
\end{align}
\end{proposition}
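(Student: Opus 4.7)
My plan is to exploit the invariance of $\nus$ directly in equation \eqref{Fluid_f}. Since $\nu(t)\equiv \nus$ and, by \eqref{d*} and \eqref{nu*}, $dD_{\ell+1}(s)=\langle h,\nuslp\rangle\,ds$ and $\eta_\ell(s)=\eta_\ell^*$, Lemma \ref{lem-abscont} allows me to write $\nusl(dx)=p_\ell(x)\,dx$ and define $\vsl(x):=p_\ell(x)/\bar{G}(x)$ on $[0,L)$ (and zero elsewhere). Substituting into \eqref{Fluid_f} and performing the changes of variables $y=x+t$ in the free-evolution term and $u=t-s$ followed by $y=x+u$ in the routing term (together with Fubini) brings every term into a form with common factor $f(y)\bar{G}(y)\,dy$, giving an identity
\begin{equation*}
\int_0^L f(y)\vsl(y)\bar{G}(y)\,dy = \int_t^L f(y)\bar{G}(y)\vsl(y-t)\,dy + \int_0^L f(y)\bar{G}(y)\Psi_\ell(t,y)\,dy,
\end{equation*}
valid for all $f\in\mathbb{C}_b[0,\infty)$ and $t\ge 0$, where $\Psi_\ell(t,y)$ collects the departure and routing contributions. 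Splitting $\int_0^L$ at $t$ on the left and using the arbitrariness of $f$, I extract (after dividing by $\bar{G}(y)>0$) two pointwise a.e.\ identities, one on $[0,t)$ and one on $[t,L)$, which drive the rest of the argument.

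For $\ell=1$, since $\eta_1^*=\lambda(1-(\ss_1)^d)\delta_0$, the $[0,t)$-identity reduces to $\vs_1(y)=\langle h,\nus_2\rangle + \lambda(1-(\ss_1)^d)$, which by \eqref{eq-hlambda} collapses to $\vs_1(y)=\langle h,\nus_1\rangle$; letting $t$ vary forces $\vs_1$ to be constant on $[0,L)$. Integrating $\vs_1\bar{G}$ and invoking \eqref{eqn-ssl} together with the mean-one assumption $\int_0^L \bar{G}(x)\,dx=1$ identifies this constant as $\ss_1$, establishing \eqref{def-r1}. For $\ell\geq 2$, the measure $\eta_\ell^*$ has density $\lambda\mathfrak{P}_d(\sslm,\ssl)(\vslm-\vsl)(x)\bar{G}(x)$, so the same manipulations reduce the $[0,t)$-identity, valid for all $t$, to the integral equation
\begin{equation*}
\vsl(y) = \langle h,\nuslp\rangle + \lambda\mathfrak{P}_d(\sslm,\ssl)\int_0^y\bigl(\vslm(v)-\vsl(v)\bigr)\,dv, \qquad y\in[0,L).
\end{equation*}
Evaluating at $y=0$ and using \eqref{eq-hlambda} together with $\langle h,\nus_1\rangle=\ss_1$ (just established) gives $\vsl(0)=\lambda(\ssl)^d+\ss_1-\lambda$, matching the prefactor in \eqref{vsl2}. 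Inducting on $\ell$ from the base $\vs_1\equiv \ss_1$, the integral equation shows $\vsl$ admits a continuously differentiable version on $[0,L)$ satisfying the linear ODE $\vsl'(y)+\lambda\mathfrak{P}_d(\sslm,\ssl)\vsl(y)=\lambda\mathfrak{P}_d(\sslm,\ssl)\vslm(y)$; variation of parameters then delivers \eqref{vsl2} exactly.

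The main technical hurdle I anticipate is the bookkeeping in the Fubini and change-of-variables step that extracts the common $f(y)\bar{G}(y)\,dy$ factor from all three terms of \eqref{Fluid_f}, and the upgrade from the weak equality (for all $f\in\mathbb{C}_b[0,\infty)$) to a pointwise identity on $[0,L)$. The latter upgrade rests on local integrability of $\vsl$ on $[0,L)$, which follows from $\nusl$ being a finite measure and $\bar{G}$ being bounded below on each compact subset of $[0,L)$; continuity of the chosen version of $\vsl$ is then propagated inductively through the integral equation, closing the argument.
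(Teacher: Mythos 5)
Your proposal is correct and follows essentially the same strategy as the paper: absolute continuity of $\nusl$ (via Lemma \ref{lem-abscont}) to define $\vsl$, substitution into \eqref{Fluid_f} with changes of variables and Fubini, extraction of a.e.\ pointwise identities, and solution of the resulting linear ODE. The two cosmetic differences from the paper's proof: (i) the paper forms the ODE for the antiderivative $I_\ell(x)=\int_0^x \vsl$ with $I_\ell(0)=0$, which avoids having to first argue that $\vsl$ itself is differentiable before solving, whereas you differentiate the integral equation for $\vsl$ directly; your route is fine provided you first pass to the continuous version of $\vsl$ equal to the right-hand side of the integral equation (which you flag) and then observe that this version is automatically $C^1$. (ii) The paper explicitly verifies as a consistency check that the derived $\vsl$ also satisfies the $[t\wedge L,L)$-identity $\M_\ell^{(1)}$; since invariance of $\nus$ already forces both identities to hold a.e., your omission of this verification is not a logical gap for the forward direction being proved here. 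One step you should spell out more (the paper handles it via a monotone class argument) is the passage from the weak identity over $f\in\mathbb{C}_b[0,\infty)$ to the pointwise a.e.\ identity after dividing by $\bar{G}(y)>0$; you gesture at it but it deserves an explicit sentence.
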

\begin{proof}

Since each $\nu_\ell^*$ is  absolutely continuous with respect to Lebesgue
measure by Lemma \ref{lem-abscont},   and $\bar{G}(x)>0$ for $x\in[0,L)$, 
there exists $(\vsl)_{\ell\in \N}$, a sequence of measurable functions on $[0,L)$ such that
$\frac{d\nu_\ell}{dx}(x)=r_\ell(x)\bar{G}(x)$. Hence,
 $\vsl(x)=\ind_{[0,L)}(x)\frac{1}{\bar{G}(x)}\frac{d\nu_\ell}{dx}(x)$ clearly satisfies 
    \eqref{def-vsl}, and   since $s_\ell^*=\langle\f1,\nu_\ell^*\rangle$, this immediately
    implies  \eqref{eqn-ssl}. 
Now using $\nu(t)=\nu^*$ in \eqref{Fluid_f}, and for $\ell = 1$,  combining 
\eqref{nu*}, \eqref{d*}, and \eqref{stat-hnu2}, we have for every $f\in\mathbb{C}_b[0,\infty)$ and $t\geq 0$,
\begin{align*}
\int_{0}^L f(x) \bar{G}(x) \vs_1(x)  dx  =&   \int_0^L f(x+t) \bar{G}(x+t) \vs_1 (x) dx + \langle h, \nus_2\rangle \int_0^t f(s) \bar{G}(s) ds + \lambda \left( 1  - (\ss_1)^d\right)  \int_0^t f(s) \bar{G}(s) ds \\
 =&  \int_0^L f(x+t) \bar{G}(x+t) \vs_1 (x) dx+ \langle h, \nus_1 \rangle \int_0^t f(s) \bar{G}(s) ds.
\end{align*} 
Recalling that $h=g/\bar{G}$, this implies that
\begin{align*}
  \int_0^L f(x) \bar{G}(x) &\left[ \vs_1 (x)  -   \ind_{\{x \geq t\}}\vs_1 (x-t)
-  \ind_{\{x < t\}} \left( \int_0^L g(s) \vs_1 (s) \, ds \right) \right]dx = 0. 
\end{align*}
A standard monotone class argument shows that this holds for every bounded  measurable function $f$. Hence, the above equation holds if and only if for a.e. $(x,t) \in [0,L) \times [0,\infty)$, 
\[  \vs_1 (x) =  \ind_{\{ x \geq t\}} \vs_1(x-t)+ \ind_{\{x < t \}}\left( \int_0^L g(s) \vs_1 (s) \, ds\right) . \]
It is easy to see that this implies that $r_1$ is a.s. equal to the constant $\int_0^L g(s)r_1(s) ds$, and with some abuse of notation, we denote this constant again by $r_1$. In turn, since both $\int_{[0,L)}g(x) dx=1$ 
and $\int_{[0,L)}\bar{G}(x) dx=1$, this implies that 
\begin{equation}
\label{h1s1}
\langle h,\nu_1^*\rangle=r_1\int_{[0,L)}h(x)\bar{G}(x)dx=r_1=r_1\int_{[0,L)}\bar{G}(x)dx=s_1^*, 
\end{equation}
 which proves \eqref{def-r1}.

Similarly, for $\ell \geq 2$, setting $\nu_\ell(0)=\nu_\ell^*$ in \eqref{Fluid_f}, and using the identity 
$\nu_\ell(t)=\nu_\ell^*$ for any $t$ along with \eqref{def-vsl}, 
\eqref{nu*} and \eqref{d*}, we have for every $f \in \mathbb{C}_b[0,\infty)$, and $t\geq 0$, 
\begin{align}
\label{fvl}
\int_0^L  f(x) \bar{G}(x) \vsl (x) dx =&  \int_0^L f(x+t) \bar{G}(x+t) \vsl (x) dx + \langle h, \nuslp \rangle \int_0^t f(s) \bar{G}(s)  ds\notag \\ 
& + \lambda \mathfrak{P}_d\Big( \sslm, \ssl\Big) \int_0^t \left( \int_0^L f(x+s) \bar{G}(x+s) 
\left( \vslm (x) - \vsl (x) \right) dx \right)  ds. 
\end{align}
  Since  $r_\ell$ is integrable with respect to $\bar{G}(x) dx$ by definition,  
we can use Fubini's theorem to change the order of integration in the last term  on the right-hand side of  \eqref{fvl}, 
and apply the identity $\bar{G}(x)=0$ for $x>L$, to obtain 
\begin{align*}
   \int_0^t \left( \int_{s\wedge L}^L f(x) \bar{G} (x) \left( \vslm (x-s) - \vsl (x-s) \right) dx \right) ds 
  = &  \int_0^{t\wedge L} f(x) \bar{G} (x) \left( \int_0^x \left( \vslm (x-s) - \vsl (x-s) \right) ds \right) dx \\
 &   +
\int_{t\wedge L}^L f(x) \bar{G} (x) \left( \int_0^t \left( \vslm (x-s) - \vsl (x-s) \right) ds \right) dx, 
\end{align*}
where we use the notation $a\wedge b$ to denote the minimum of $a$ 
and $b$. 
Thus, we can rewrite \eqref{fvl} as

\begin{equation}
\label{fvl2}
\int_0^L  f(x) \bar{G}(x) \left[r_\ell (x) -  \M_\ell^{(1)} (x,t) \ind_{\{x \geq t\wedge L\}} -  \M_{\ell}^{(2)} (x) \ind_{\{x < t\wedge L\}} \right]dx= 0,  
\end{equation}
where, for $(x,t) \in [0,L) \times [0,\infty)$,  
\begin{align*}
  \M_\ell^{(1)} (x,t)  & :=     \vsl (x-t)  + \lambda  \mathfrak{P}_d\Big( \sslm,\ssl\Big) \int_0^t \left( \vslm (x-s) - \vsl (x-s) \right) ds, \\
\M_\ell^{(2)} (x) & := \langle h, \nuslp \rangle + \lambda  \mathfrak{P}_d\Big( \sslm, \ssl\Big) 
\int_0^x \left( \vslm (u) - \vsl (u) \right) du. 
\end{align*}
Now,  \eqref{fvl2} clearly holds if and only if for every $t\geq 0$,    
$r_\ell (x) = \M_\ell^{(1)} (x,t) \ind_{\{x \geq t\wedge L\}} + \M_{\ell}^{(2)} (x) \ind_{\{x < t\wedge L\}}$  for a.e. $x \in [0,L)$, which in turn holds if and only if  for every $t\geq 0$,
\begin{equation}
\label{eqn1}
   r_\ell (x) = \M_{\ell}^{(1)} (x,0) =  \M_{\ell}^{(1)} (x,t), \quad   \text{for a.e. }\quad x \in[t \wedge L,L), 
\end{equation} 
and 
\begin{equation}
\label{eqn2}
r_\ell (x) = \M_{\ell}^{(2)} (x), \quad \text{for a.e.}\quad x < t\wedge L. 
\end{equation}

We will first consider the implications of the equality in  \eqref{eqn2}. 
Define 
\[ I_\ell (x) := \int_0^x r_\ell (u) du, \]
and 
\begin{equation}
\label{def-bl}
B_\ell (x) := \left[\langle h, \nu_\ell^*\rangle - \lambda  \mathfrak{P}_d\Big( \sslm, \ssl\Big)  (\sslm - \ssl) \right]+ \lambda  \mathfrak{P}_d\Big( \sslm, \ssl\Big) I_{\ell-1} (x). 
\end{equation}
Note that $B_\ell$ is continuous on $[0,L)$, and using the definition of $\M_\ell^{(2)}$ and \eqref{stat-hnuls}, we can rewrite \eqref{eqn2} as 
\begin{equation}
\label{ode}
 \frac{d I_\ell}{dx}(x) + \lambda  \mathfrak{P}_d\Big( \sslm, \ssl\Big) I_\ell (x)  = B_\ell (x), \end{equation}
with boundary condition $I_\ell (0) = 0$.  Solving this linear ordinary differential equation, we see that  
\[  I_\ell (x) = e^{-\lambda \mathfrak{P}_d( \sslm, \ssl) x} \int_0^x 
e^{\lambda \mathfrak{P}_d ( \sslm, \ssl) u}  B_\ell (u) du.   
\]
Combining this with \eqref{ode} and  recalling that $r_\ell = dI_\ell/dx$, we obtain 
\begin{equation}
\label{vell-1}
 r_\ell (x) = B_\ell (x) - \lambda \mathfrak{P}_d\Big( \sslm, \ssl\Big) \int_0^x e^{-\lambda  \mathfrak{P}_d( \sslm, \ssl) (x-u)} 
B_\ell (u) du,
\end{equation}
which in particular shows that $r_\ell$ is continuous. Also, to further simplify the right-hand side, note  that 
 \eqref{def-bl} shows that $B_\ell$ is differentiable with continuous derivative $\lambda  \mathfrak{P}_d( \sslm, \ssl) r_{\ell-1}$.   
An  integration by parts then yields 
\begin{align*}  
\lambda \mathfrak{P}_d( \sslm, \ssl)\int_0^x e^{\lambda  \mathfrak{P}_d( \sslm, \ssl) u} B_\ell (u) du 
=& 
e^{\lambda  \mathfrak{P}_d( \sslm, \ssl) x} B_\ell (x) - B_\ell (0)- \lambda  \mathfrak{P}_d( \sslm, \ssl)\int_0^x e^{\lambda  \mathfrak{P}_d( \sslm, \ssl) u} r_{\ell-1}(u) du. \end{align*}
Substituting this back into \eqref{vell-1},  and using \eqref{def-bl}, \eqref{def_poly}, \eqref{eq-hlambda} and \eqref{h1s1} we obtain 
\begin{align*}
    \vsl (x)  = &e^{-\lambda \mathfrak{P}_d( \sslm, \ssl) x} \left[
      \langle h, \nu_\ell^*\rangle- \lambda \mathfrak{P}_d( \sslm,\ssl) (\sslm - \ssl) \right]+ \lambda e^{-\lambda \mathfrak{P}_d( \sslm, \ssl) x} \mathfrak{P}_d( \sslm, \ssl)\int_0^x
      e^{\lambda  \mathfrak{P}_d( \sslm, \ssl) u} \vslm (u) du
     \\
 =&  (\lambda (\ssl)^d+s_1^*-\lambda) e^{-\lambda \mathfrak{P}_d( \sslm, \ssl) x} + \lambda  \mathfrak{P}_d( \sslm, \ssl)\int_0^x   e^{-\lambda  \mathfrak{P}_d( \sslm, \ssl) (x-u)} r_{\ell-1} (u)
      du, 
\end{align*}
 which coincides with the identity in \eqref{vsl2}.  

To complete the proof, it only remains to  check that with this definition of $\vsl$,
\eqref{eqn1} is also satisfied, and that each $r_\ell$ is continuously differentiable.
First, note that for any $t \geq 0$ and
$x \in[t\wedge L,L)$, we have 
\begin{equation*}
  \int_0^t \vsl(x-s) \, ds  =   \int_{x-t}^x   \vsl (u) du  = I_\ell
  (x) - I_\ell(x-t).  
\end{equation*}
When combined with \eqref{ode} both as is and with $x$ replaced by $x-t$, and the relation $r_\ell=\frac{dI_\ell}{dx}$, this  yields 
\[  \lambda  \mathfrak{P}_d( \sslm, \ssl) \int_0^t \vsl(x-s) \, ds  =
B_\ell (x) - B_\ell (x-t) - \vsl (x) + \vsl (x-t), 
\]
and when combined with \eqref{def-bl}, this  implies  
\begin{align*} 
\lambda  \mathfrak{P}_d( \sslm, \ssl) \int_0^t \vslm(x-s) ds 
&=  \lambda  \mathfrak{P}_d( \sslm, \ssl)  
(I_{\ell-1} (x) - I_{\ell-1}(x-t)) \\
&= B_\ell (x) - B_\ell (x-t). 
\end{align*}
Together, the last two displays imply that  
\[   \lambda  \mathfrak{P}_d( \sslm, \ssl) \int_0^t (\vslm (x-s) -
\vsl (x-s) )\, ds = \vsl(x)  - \vsl (x-t), 
\]
which, given the definition of $\M_{\ell}^{(1)}$, proves that
\eqref{eqn1} is also satisfied.

Since $r_1$ is a constant function, it is clearly continuously differentiable and $r_1^\prime(x)=0$ for all $x\in[0,L)$. Given that $r_{\ell-1}$ is continuously differentiable on $[0,L)$, the continuous differentiability of $r_\ell$  on $[0,L)$ is  an immediate consequence of \eqref{vsl2}. 
The continuously differentiable property of $(r_\ell)_{\ell\in \N}$ on $[0,L)$ then follows from the principle of mathematical induction. \\
\end{proof}

\subsection{Existence of a physical invariant state}
\label{sec-exi}

In this section, we use the characterization obtained in Proposition \ref{prop-reduction}
to show that an invariant state always exists (see  Proposition \ref{prop-exist}).
The proof of existence relies on a few preliminary results. 
The first is a   monotonicity property of the sequence of functions $(r_\ell)_{\ell\in \N}$ that satisfy \eqref{vsl2}.

\begin{lemma}
\label{vprop}
  Given an invariant state $(\nu_\ell)_{\ell\in\N}$, let $(r_\ell)_{\ell\in \N}$ be the associated measurable functions on $[0,\infty)$ that are continuously differentiable on $[0,L)$, as described in Proposition \ref{prop-reduction}. 
Then, for any $\ell\geq 1$,
$$r_{\ell+1}(x)\leq\vsl(x) \qquad \mbox{ and } \qquad
 \vsl'(x)\geq 0, \qquad\mbox{ for } x\in (0,L)$$ 
\end{lemma}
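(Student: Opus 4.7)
\noindent\textbf{Proof plan for Lemma \ref{vprop}.}

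The plan is to prove the two inequalities simultaneously by induction on $\ell$, linking them via a first-order linear ODE satisfied by each $r_\ell$. Write $a_\ell := \lambda \mathfrak{P}_d(\sslm,\ssl)$ and $c_\ell := \lambda(\ssl)^d + s_1^* - \lambda$, so that for $\ell \geq 2$ the identity \eqref{vsl2} reads
\[
  r_\ell(x) = c_\ell\,e^{-a_\ell x} + a_\ell \int_0^x e^{-a_\ell(x-u)} r_{\ell-1}(u)\,du.
\]
Differentiating in $x$ (permissible by continuous differentiability from Proposition \ref{prop-reduction}) and grouping terms one finds
\[
  r_\ell'(x) \;=\; a_\ell\bigl[r_{\ell-1}(x) - r_\ell(x)\bigr], \qquad x\in(0,L),
\]
which is the key identity of the proof. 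In particular, once the comparison $r_\ell \leq r_{\ell-1}$ is established, the sign statement $r_\ell'(x)\geq 0$ is immediate.

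The induction runs as follows. For the base case $\ell = 1$ the constancy of $r_1 \equiv s_1^*$ gives $r_1'\equiv 0$ trivially. For the comparison $r_2\leq r_1$, substitute $r_1\equiv s_1^*$ into \eqref{vsl2} with $\ell=2$ and evaluate the convolution explicitly to get
\[
  r_2(x) \;=\; s_1^* - \lambda\bigl(1-(s_2^*)^d\bigr) e^{-a_2 x} \;\leq\; s_1^*,
\]
using only that $s_2^*\in[0,1]$, which holds because $\nu_2^*$ is a sub-probability measure.

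For the inductive step, suppose $r_\ell \leq r_{\ell-1}$ on $[0,L)$. The ODE above then yields $r_\ell'\geq 0$ on $(0,L)$, i.e.\ $r_\ell$ is nondecreasing, and in particular $r_\ell(x)\geq r_\ell(0) = c_\ell$ for $x\in[0,L)$. To deduce $r_{\ell+1}\leq r_\ell$, use that (i) $s_{\ell+1}^*\leq \ssl$ (this is built into the state space $\s$ via \eqref{def-space} applied to $f=\f1$), so
\[
  c_{\ell+1}-c_\ell \;=\; \lambda\bigl((s_{\ell+1}^*)^d - (\ssl)^d\bigr)\;\leq\;0,
\]
and (ii) $r_\ell$ is nondecreasing. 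Plugging $r_\ell(u)\leq r_\ell(x)$ into the convolution representation of $r_{\ell+1}$ yields
\[
  r_{\ell+1}(x) \;\leq\; c_{\ell+1}e^{-a_{\ell+1}x} + r_\ell(x)\bigl(1 - e^{-a_{\ell+1}x}\bigr) \;\leq\; r_\ell(x) e^{-a_{\ell+1}x} + r_\ell(x)\bigl(1-e^{-a_{\ell+1}x}\bigr) \;=\; r_\ell(x),
\]
where the second inequality uses $c_{\ell+1}\leq c_\ell\leq r_\ell(x)$. This completes the induction.

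The only genuinely delicate point is the coupling of the two conclusions: monotonicity of $r_\ell$ is needed to compare $r_{\ell+1}$ with $r_\ell$ at level $\ell+1$, while the comparison is needed to get monotonicity from the ODE identity. Threading them together through a single induction, and arranging the base step so that $r_1$ being constant trivially satisfies the monotonicity side, is what makes the argument go through cleanly.
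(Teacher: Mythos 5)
Your proof is correct, but it reaches the first inequality $r_{\ell+1}\leq r_\ell$ by a genuinely different route than the paper does. The paper gets this ordering essentially for free from the state space: since $\nu^*\in\s$, the definition \eqref{def-space} already encodes $\nu_{\ell+1}^*\leq\nu_\ell^*$ as measures, which combined with \eqref{def-vsl} gives $\int_A r_{\ell+1}\bar{G}\,dx\leq\int_A r_\ell\bar{G}\,dx$ for every Borel $A$, and then continuity of $r_\ell\bar{G}$ (a Radon--Nikodym-type argument, via Proposition~2.23 of Folland) upgrades this to the pointwise inequality. The sign of $r_\ell'$ is then a one-line consequence of the same ODE identity you derive. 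Your argument, by contrast, never invokes the containment in $\s$: you recover the $\ell$-monotonicity entirely from the convolution representation \eqref{vsl2}, by observing that $r_\ell(0)=c_\ell$ is monotone in $\ell$, that the ODE $r_\ell' = a_\ell(r_{\ell-1}-r_\ell)$ bootstraps monotonicity in $\ell$ into monotonicity in $x$, and that monotonicity in $x$ feeds back to give the comparison at level $\ell+1$ (since $r_\ell(u)\leq r_\ell(x)$ for $u\leq x$ collapses the convolution). This makes your proof self-contained at the level of the recursion and shows the ordering is already forced by the fixed-point structure, independent of the a priori knowledge that $\nu^*\in\s$. The trade-off is that your argument is slightly longer and requires threading the two conclusions together through the induction, whereas the paper decouples them cleanly, using the state-space ordering up front. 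Both are valid; yours arguably illuminates more of the structure, the paper's is more economical given the hypotheses already available.
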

\begin{proof} 
                      The fact that   $\nu^*$ lies in $\s$ and satisfies relation \eqref{def-vsl},
                       which was established in Proposition
                       \ref{prop-reduction}, 
                       imply that $\int_A r_{\ell+1} (x) \bar{G} (x) dx \leq \int_A r_{\ell}(x) \bar{G}(x) dx$ for all $A \in {\mathcal B}[0,\infty)$ and
                         $\ell \geq 1$. 
            Since,  by Assumption \ref{ass-G} and Proposition
            \ref{prop-reduction},
            $r_{\ell+1} \bar{G}$ and $r_\ell \bar{G}$ are  continuously differentiable on $[0,L)$,  
              (a straightforward generalization of) Proposition 2.23 of  \cite{Folland} shows that
              $r_{\ell+1} \leq r_\ell$ on $[0,L)$. 
           
              To show $r_\ell^\prime (x) \geq 0$, again using the
            differentiability  of $r_\ell$, and 
        differentiating both sides of  \eqref{vsl2}, we obtain 
\begin{align*}
\vs_{\ell}'(x)=&-\lambda(\lambda(\ssl)^d+s_1^*-\lambda)\mathfrak{P}_d(\sslm,\ssl)e^{-\lambda\mathfrak{P}_d(\sslm,\ssl)x}\\
&\quad -(\lambda\mathfrak{P}_d(\sslm,\ssl))^2\int_0^xe^{-\lambda\mathfrak{P}_d(\sslm,\ssl)(x-u)}\vslm(u)du +\lambda\mathfrak{P}_d(\sslm,\ssl)\vslm(x).
\end{align*}
Using \eqref{vsl2} to replace the second term on the right-hand side above, it follows that  for $x \in [0,L)$,
\begin{align*}
  \vs_{\ell}'(x)=&-\lambda(\lambda(\ssl)^d+s_1^*-\lambda)\mathfrak{P}_d(\sslm,\ssl)e^{-\lambda\mathfrak{P}_d(\sslm,\ssl)x}+\lambda\mathfrak{P}_d(\sslm,\ssl)(\lambda(\ssl)^d+s_1^*-\lambda)e^{-\lambda\mathfrak{P}_d(\sslm,\ssl)x}\\
  & -\lambda\mathfrak{P}_d(\sslm,\ssl)\vsl(x)+\lambda\mathfrak{P}_d(\sslm,\ssl)\vslm(x)\\
=&\lambda\mathfrak{P}_d(\sslm,\ssl)(\vslm(x)-\vsl(x)).
\end{align*}
Using the identity $r_{\ell-1} \leq r_\ell$ on $[0,L)$ proved above, this implies $\vsl'\geq 0$ on $(0,L)$, as desired. 
 \end{proof}

We now consider physical invariant states, namely those  that satisfy condition \eqref{eq-tailcond}. 

\begin{lemma}
\label{lem-s1eqbm}
Let $\nu^* = (\nu_\ell^*)_{ \ell \in \N}$ be a physical invariant
state of the hydrodynamic equations, that is, for which the associated $(s^*_\ell)_{\ell \in \N}$ satisfies
$\lim_{\ell\rightarrow\infty}\ssl=0$,  with $\ssl$ as in \eqref{def-ssl}. 
Then $\langle h,\nu^*_1 \rangle =s_1^*= \lambda$. 
\end{lemma}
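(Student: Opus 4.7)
The plan is to combine the known lower bound $\langle h,\nu_1^*\rangle\geq \lambda$ from Lemma \ref{lem-hus} with a matching upper bound obtained by sending $\ell\to\infty$ in the recursion \eqref{eq-hlambda}. The identity $\langle h,\nu_1^*\rangle=s_1^*$ comes for free from the proof of Proposition \ref{prop-reduction}: there, the structure of the hydrodynamic equations for $\ell=1$ forced $r_1$ to be the constant $s_1^*$ and the elementary computation in \eqref{h1s1} gave $\langle h,\nu_1^*\rangle=s_1^*\int \bar G=s_1^*$. Thus the whole task reduces to showing $\langle h,\nu_1^*\rangle\leq \lambda$.

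To get this upper bound, I would rewrite \eqref{eq-hlambda} as
\begin{equation*}
\langle h,\nu_1^*\rangle \;=\; \lambda \;-\; \lambda (s_\ell^*)^d \;+\; \langle h,\nu_{\ell+1}^*\rangle,\qquad \ell\geq 1,
\end{equation*}
and pass to the limit $\ell\to\infty$. Since $s_\ell^*\to 0$ by the physical invariant state assumption \eqref{eq-tailcond}, the middle term vanishes, and it remains to show that $\langle h,\nu_{\ell+1}^*\rangle\to 0$.

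For this, I would argue as follows. By Lemma \ref{lem-abscont}, each $\nu_\ell^*$ has a density, call it $f_\ell$. The definition of $\s$ forces $\nu_\ell^*-\nu_{\ell+1}^*$ to be a nonnegative measure, so $f_{\ell+1}\leq f_\ell$ almost everywhere; hence $f_\ell$ decreases pointwise a.e.\ to a nonnegative limit $f_\infty$. Monotone convergence gives $\int f_\infty\,dx = \lim_\ell s_\ell^* = 0$, so $f_\infty\equiv 0$ a.e. Since \eqref{Fluid_bound} guarantees $\int h\,f_1\,dx = \langle h,\nu_1^*\rangle<\infty$, the function $hf_1$ is an integrable dominator for the sequence $hf_\ell$, and dominated convergence yields $\langle h,\nu_{\ell+1}^*\rangle=\int h f_{\ell+1}\,dx\to 0$. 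Combining the displays above gives $\langle h,\nu_1^*\rangle=\lambda$, and then $s_1^*=\langle h,\nu_1^*\rangle=\lambda$, as claimed.

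The only subtle point is producing an integrable dominator for the dominated convergence step, since $h$ is typically unbounded near $L$; monotonicity of the densities coming from the order structure of $\s$ is precisely what makes $hf_1$ the natural choice, and its integrability is built into the very definition of a solution to the hydrodynamic equations via \eqref{Fluid_bound}. Everything else is routine.
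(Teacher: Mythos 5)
Your argument is correct and follows essentially the same route as the paper's proof: establish pointwise monotone decay of the densities, use the boundedness afforded by $\ell=1$ to invoke dominated convergence, pass $\ell\to\infty$ in the recursion \eqref{eq-hlambda}, and invoke the $\ell=1$ computation from Proposition \ref{prop-reduction} to convert $\langle h,\nu_1^*\rangle$ into $s_1^*$. The paper works with the normalized densities $r_\ell=f_\ell/\bar G$ (bounded by $s_1^*$, with $\bar G$ and $g$ as the respective integrable dominators in two DCT applications), whereas you work directly with $f_\ell$ and use $hf_1$ as the single dominator via \eqref{Fluid_bound} — a slightly leaner bookkeeping of the same idea; also note that in \eqref{h1s1} both $\int h\bar G = \int g = 1$ and $\int\bar G=1$ are used, not just $\int\bar G$.
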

\begin{proof}  Fix an invariant state, and 
   let $(r_\ell)_{\ell \in  \N}$ be the associated sequence of functions 
   described in Proposition \ref{prop-reduction}. 
By Lemma \ref{vprop} for every $x\in [0,L)$, $r_\ell(x)$ is monotonically decreasing in $\ell$, and is bounded below by $0$. Together with \eqref{def-r1}, this shows $r_\ell(x)\leq r_1=s_1^*$, for all $x\in[0,L)$, $\ell \in \N$, and the limit
$r_\infty(x)=\lim_{\ell\rightarrow\infty}r_\ell(x)$ exists for all $x\in [0,L)$. 
      Since  $\bar{G}$ is integrable,  the dominated convergence theorem implies 
$\lim_{\ell\rightarrow\infty}\int_0^L r_\ell(x)\bar{G}(x)dx=\int_0^L r_\infty(x) \bar{G}(x)dx.$ 
On the other hand,  \eqref{eqn-ssl} and the tail condition $\lim_{\ell\rightarrow\infty}\ssl=0$ of \eqref{eq-tailcond} show that 
$\lim_{\ell\rightarrow\infty}\int_0^L r_\ell(x)\bar{G}(x)dx=\lim_{\ell\rightarrow\infty}s_\ell^*=0.$  
When combined, these limits imply 
$\int_0^L r_\infty(x) \bar{G}(x)dx=0.$ 
Since $\bar{G}(x)>0$ and $r_\infty(x)\geq 0$ for $x\in[0,L)$, this implies $r_\infty= 0$ a.e.. Likewise, using $h=g/\bar{G}$, \eqref{def-vsl},  the integrability of $g$, and the dominated convergence theorem, one has 
\begin{equation*}
\lim_{\ell\rightarrow\infty}\langle h,\nu_\ell^*\rangle=\lim_{\ell\rightarrow\infty}\int_0^L r_\ell(x)g(x)dx=\int_0^L r_\infty(x) g(x)dx=0.
\end{equation*}
Thus, taking the limit in equation \eqref{eq-hlambda} as $\ell\rightarrow\infty$, and using the fact that $s_\ell^*\rightarrow 0$, 
we see that
\begin{align*}
\lim_{\ell\rightarrow\infty}\langle h,\nu_\ell^*\rangle=&-\lambda+\langle h,\nu_1^*\rangle.
\end{align*}
Comparing the last two equations, 
we conclude that
$\langle h,\nu_1^*\rangle=\lambda.$   Since \eqref{def-vsl} and  \eqref{def-r1} imply $\langle h, \nu_1^* \rangle =
  s_1^* \int_{[0,L)} g(x) dx = s_1^*$, this completes the proof of the lemma. 
\end{proof}

  Let $\C_b^1[0,\infty)$ denote the space of bounded functions on  $[0,\infty)$ that are continuously differentiable on $[0,L)$. Fix $d \geq 2$, $\lambda \in (0,1)$ and
  consider   the associated map $\bar{F} = F^{(d)}:\C_b^1[0,\infty) \times (0,1) \mapsto \R_+$ defined as follows:
    for $r\in\C_b^1[0,\infty)$ define 
  \begin{equation}
    \label{eq-F}
    \bar{F} (r, s) = \lambda s^d \int_0^L e^{-\lambda  \mathfrak{P}_d( \bar{s},  s)  x} \bar{G}(x) dx
    + \lambda  \mathfrak{P}_d\Big( \bar{s},  s\Big) \int_0^L
      \left(\int_0^x e^{-\lambda \mathfrak{P}_d (\bar{s}, s) (x-u)} r (u) du \right)
      \bar{G}(x) dx, 
  \end{equation}
   where $\bar{s} = \bar{s}(r)$ is defined by 
  \begin{equation}
    \label{bars}
    \bar{s}(r)  := \int_0^L r(x) \bar{G}(x) dx,\end{equation}
Then the following criterion for existence of a physical invariant state is an immediate consequence of Proposition \ref{prop-reduction} and Lemma \ref{lem-s1eqbm}. 

\begin{proposition}
  \label{prop-exist}
  Suppose that whenever either $r \equiv \lambda$ or when $r$ is non-constant and 
  \begin{equation}
    \label{rcond}
    r(0) = \lambda \left(\int_0^L r(x) \bar{G}(x) dx \right)^d,
  \end{equation}
  then $s \mapsto \bar{F}(r, s)$ has at least one fixed point on $(0,\bar{s}(r))$. 
    Then there exists a physical  invariant state for the hydrodynamic equations with
    arrival rate $\lambda$.
    \end{proposition}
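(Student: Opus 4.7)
The plan is to construct the physical invariant state inductively, using the hypothesis to supply a fixed point at each stage and using \eqref{eqn-vsl} from Proposition \ref{prop-reduction} as the template for the form of each $r_\ell$.

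For the base case, put $r_1\equiv \lambda$ on $[0,L)$; since the mean service time equals $1$ under Assumption \ref{ass-G}, $\bar s(r_1)=\lambda\int_0^L\bar G(x)\,dx=\lambda$, so set $s_1^*:=\lambda$. For the inductive step, suppose $r_{\ell-1}$ has already been built, belongs to one of the two classes covered by the hypothesis, and satisfies $\bar s(r_{\ell-1})=s_{\ell-1}^*$. The hypothesis then yields a fixed point $s_\ell^*\in(0,s_{\ell-1}^*)$ of $s\mapsto \bar F(r_{\ell-1},s)$, which is exactly the map $F_\ell$ of \eqref{eq-fld}. I would define $r_\ell$ by the explicit formula \eqref{eqn-vsl} using this $s_\ell^*$ and the previously built $r_{\ell-1}$. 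The fixed-point property rewrites as $\bar s(r_\ell)=s_\ell^*$; evaluating \eqref{eqn-vsl} at $x=0$ gives $r_\ell(0)=\lambda(s_\ell^*)^d$, so condition \eqref{rcond} holds; and since $s_\ell^*<s_{\ell-1}^*$, inspection of the exponential form of \eqref{eqn-vsl} shows $r_\ell$ is non-constant. The induction therefore continues for all $\ell\geq 2$.

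Once the full sequence is in hand, I set $\nu_\ell^*(A):=\int_A r_\ell(x)\bar G(x)\,dx$ as in \eqref{def-vsl} and verify that the constant trajectory $\nu(t)\equiv\nu^*$ satisfies \eqref{Fluid_bound}--\eqref{Fluid_f}. The bound \eqref{Fluid_bound} is immediate, since $\langle h,\nu_1^*\rangle=\lambda\int_0^L g(x)\,dx=\lambda<\infty$. The remaining equations are then verified by running the computation of Proposition \ref{prop-reduction} in reverse: formula \eqref{eqn-vsl} is exactly the statement that $r_\ell$ solves the linear ODE \eqref{ode} with the correct boundary data, so plugging $\nu_\ell^*$ into the right-hand side of \eqref{Fluid_f} and manipulating via the same integration-by-parts identity used in the proof of Proposition \ref{prop-reduction} recovers $\langle f,\nu_\ell^*\rangle$; specializing to $f\equiv\f1$ and comparing to the constant trajectory yields the mass balance \eqref{Fluid_Balance}, after invoking identity \eqref{eqn-vslbc2}, which itself drops out upon integrating \eqref{ode} against $g$.

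The hard part, and the main obstacle, is establishing the tail condition $s_\ell^*\to 0$ that is required for the constructed invariant state to be \emph{physical} in the sense of Remark \ref{rem-invstate}. The sequence $(s_\ell^*)$ is strictly decreasing and bounded below by $0$, hence converges to some $s_\infty\in[0,\lambda)$. By the monotonicity of $r_\ell$ in $\ell$ established in Lemma \ref{vprop}, the pointwise limit $r_\infty(x):=\lim_\ell r_\ell(x)$ exists and dominated convergence gives $\int_0^L r_\infty(x)\bar G(x)\,dx=s_\infty$. Passing to the limit in the relation $s_\ell^*=\bar F(r_{\ell-1},s_\ell^*)$, again via DCT, shows that $s_\infty$ is itself a fixed point of $s\mapsto\bar F(r_\infty,s)$ with $\bar s(r_\infty)=s_\infty$ and $r_\infty(0)=\lambda s_\infty^d$. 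I would then try to argue by contradiction that $s_\infty>0$ is inconsistent with these limiting identities, combined with the explicit exponential form of $\bar F$. If a direct contradiction is elusive, a natural fallback is to strengthen the induction by always selecting the \emph{smallest} fixed point of $\bar F(r_{\ell-1},\cdot)$ in $(0,s_{\ell-1}^*)$---which exists by continuity of $\bar F$ in $s$---so as to drive $s_\ell^*$ to zero as aggressively as the hypothesis allows.
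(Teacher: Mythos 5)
Your construction of $(r_\ell,s_\ell^*)$ is the same inductive scheme the paper uses: initialize $r_1\equiv\lambda$, $s_1^*=\lambda$, then at each stage invoke the hypothesis to obtain a fixed point $s_\ell^*$ of $s\mapsto\bar F(r_{\ell-1},s)=F_\ell(s)$ on $(0,s_{\ell-1}^*)$ and define $r_\ell$ by \eqref{eqn-vsl}. Your checks that $\bar{s}(r_\ell)=s_\ell^*$, that $r_\ell(0)=\lambda(s_\ell^*)^d$ so \eqref{rcond} propagates, and that $r_\ell$ is non-constant for $\ell\geq 2$ are correct and keep the induction alive, and the reversal of Proposition \ref{prop-reduction} to confirm that the resulting constant trajectory solves \eqref{Fluid_bound}--\eqref{Fluid_f} is the intended route.

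The gap you flag concerning the tail condition $s_\ell^*\to0$ is a real one, and in fact the paper's own proof of Proposition \ref{prop-exist} does not verify it either: it constructs the sequence and declares the result, implicitly invoking the converse of Lemma \ref{lem-s1eqbm}, which Remark \ref{rem-invstate} asserts but the paper never proves. Your limiting idea is the right repair, and it does close, but only after a computation you leave undone. Since $s_\ell^*\downarrow s_\infty$ and, by the monotonicity of $(r_\ell)$, $r_\ell\downarrow r_\infty$ pointwise with $0\leq r_\infty\leq\lambda$, suppose $s_\infty>0$. Then $\mathfrak{P}_d(s_{\ell-1}^*,s_\ell^*)\to ds_\infty^{d-1}>0$, and passing to the limit in \eqref{eqn-vsl} (dominated convergence on the compact interval $[0,x]$, the integrand being bounded by $\lambda$) gives
\[
r_\infty(x)=\lambda s_\infty^d\,e^{-\lambda d s_\infty^{d-1}x}+\lambda d s_\infty^{d-1}\int_0^x e^{-\lambda d s_\infty^{d-1}(x-u)}r_\infty(u)\,du,
\]
a linear Volterra equation whose unique bounded solution is the constant $r_\infty\equiv\lambda s_\infty^d$. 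Integrating against $\bar G$ then gives $s_\infty=\bar s(r_\infty)=\lambda s_\infty^d$, hence $s_\infty=\lambda^{-1/(d-1)}>1$, which contradicts $s_\infty\leq s_1^*=\lambda<1$. Thus $s_\infty=0$ and the constructed state is physical. Note this is precisely where $\lambda<1$ enters; for $\lambda=1$ the constant state $s_\ell^*\equiv 1$ shows the implication genuinely fails. Your proposed fallback of always selecting the smallest fixed point is unnecessary and would not help here, since the contradiction above holds for any choice of fixed point at each stage.
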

\begin{proof}  
Assume the supposition of the proposition holds. 
By Proposition \ref{prop-reduction} and Lemma \ref{lem-s1eqbm},
any physical invariant state must satisfy $s_1^* = \lambda$ and $r_1 \equiv \lambda$. 
  On the other hand,  \eqref{eq-hlambda} and Lemma \ref{lem-s1eqbm} imply 
  that $\langle h, \nuslp\rangle=\lambda (\ssl)^d$. When substituted into \eqref{vsl2} and \eqref{def-vsl}, this implies \eqref{eqn-vsl} and \eqref{eqn-vslbc2} must hold.
  Thus,  to show existence of an invariant state
it suffices to find sequences of continuously differentiable functions $(r_\ell)_{\ell \geq 2}$
and positive constants $(s_\ell^*)_{\ell \geq 2}$ that satisfy \eqref{eqn-vsl} and \eqref{eqn-ssl}. 
For $\ell = 2$, define $F_2 (s) = \bar{F}(r, s)$, with $r \equiv s_1^* = \lambda$,  and let $s_2^*$ be
a fixed point of $F_2$ in $(0,\lambda]$, which exists by the claim, and
let $r_2$ be as defined in terms of $s_2^*$ via \eqref{eqn-vsl}.  
Then $s_2^*$ clearly satisfies \eqref{eqn-ssl} due to \eqref{bars},
and there is a unique $s_2^*$ satisfying \eqref{eqn-ssl} with $r_2$ satisfying \eqref{eqn-vsl}
if and only if the fixed point of $F_2$ is unique.   
Now, suppose that for some $j \geq 2$,  there exist continuously differentiable
functions $(r_\ell)_{2 \leq \ell \leq j}$ and constants $(s_\ell^*)_{2 \leq \ell \leq j}$
that satisfy \eqref{eqn-vsl} and \eqref{eqn-ssl} for $2 \leq \ell \leq j$. 
Then \eqref{eqn-vsl} and \eqref{eqn-ssl} imply that $r = r_j$ satisfies \eqref{rcond}.
Therefore, setting $F_{j+1}(\cdot) = \bar{F}(r_j, \cdot)$, the claim above shows that
$F_{j+1}$ has a fixed point that we denote by $s_{j+1}^*$, and let $r_{j+1}$ be as defined in
\eqref{eqn-vsl}, and note that then \eqref{eqn-ssl} holds on account of \eqref{bars}.
Thus, we have constructed $(r_\ell)_{2 \leq \ell \leq j+1}$ and $(s_\ell^*)_{2 \leq \ell \leq j+1}$
that satisfy \eqref{eqn-vsl} and \eqref{eqn-ssl} for all $\ell \leq j+1$, and the first
assertion of the proposition follows by induction.  
\end{proof}

\subsection{Proof of Theorem \ref{th-reduction}}
\label{pf-mainresult}

\begin{proof}[Proof of Theorem  \ref{th-reduction}]
 To  establish Theorem \ref{th-reduction}
 it suffices to verify the supposition of Proposition \ref{prop-exist}.
 To see why the latter is true, fix $r$ that satisfies the stated conditions, and  define
$J (s) := \bar{F}(r,s) - s$. Note that then we need to show that 
$J$ has at least one zero on $(0,s)$. Note that  by \eqref{def_poly},
  $\mathfrak{P}_d (\bar{s}, s) > 0$ for all $s \in [0,1]$ whenever $\bar{s}  > 0$.  Since $r$ is not identically zero and
  $\bar{G}$ is strictly positive on $[0,L)$, \eqref{eq-F} implies that $\bar{F}(r,0) > 0$ and hence, $J(0) > 0$.
  Since $J$ is continuous, by
  the intermediate value theorem, to show $J$ has a zero on $(0,\bar{s}]$, it suffices to show that $J(\bar{s}) < 0$. 
Note that a simple integration by parts yields
  \begin{align}
    \label{IBP}
    \bar{F}(r, s)= & \int_0^L r (x) \bar{G}(x) dx -\left[r(0) -\lambda s^d\right]\int_0^Le^{-\lambda\mathfrak{P}_d(\bar{s},s)x}\bar{G}(x)dx-\int_0^L\left(\int_0^xe^{-\lambda\mathfrak{P}_d(\bar{s},s)(x-u)}r'(u)du\right)\bar{G}(x)dx. 
  \end{align}
  If $r \equiv \lambda$, then by \eqref{bars}, $\bar{s} = \lambda$ and \eqref{IBP} shows that
  $J(\bar{s}) = -\lambda (1 - s^d) \int_0^L e^{-\lambda\mathfrak{P}_d(\bar{s},s)x}\bar{G}(x)dx$.
  Thus,  $J(\bar{s}) \leq 0$ (with equality holding only if $\lambda = 1$) and so
  $J$ has a zero on $(0,\bar{s})$ if $\lambda < 1$ and at $\bar{s}$ if $\lambda = 1$.
On the other hand, if $r$ satisfies \eqref{rcond}, then \eqref{IBP} and \eqref{bars} show that 
\begin{align*} 
J(\bar{s})    = &    -\int_0^L\left(\int_0^xe^{-\lambda\mathfrak{P}_d(\bar{s},s)(x-u)}r'(u)du\right)\bar{G}(x)dx < 0, 
\end{align*}
where the last inequality also uses the fact that $r$ is non-constant.
This proves the first assertion  of Theorem \ref{th-reduction}.  Also, in view of \eqref{def-vsl}, 
the right-hand side of \eqref{eqn-vslbc2} is equal to $\langle h, \nu_\ell^*\rangle$. 
Therefore, \eqref{def-vsl} follows from  Proposition \ref{prop-reduction} and Lemma \ref{lem-s1eqbm}.  This completes the proof of Theorem \ref{th-reduction}.
   \end{proof}

\section{Tail Decay for the Invariant State}
\label{sec-decay}
Throughout this section recall that we fix $d\geq 2$, $\lambda \in (0,1)$, suppose Assumptions \ref{ass-G}-\ref{ass-service} hold and let $\beta>d/(d-1)$, $x_0>1/\lambda d$ and $C_0<\infty$ be as in Assumption \ref{ass-service}. Also, let $(s_\ell^*)_{\ell\in \N}$ be the queue length
distribution associated with a physical invariant state $(\nusl)_{\ell\in \N}$ of the hydrodynamic equations. Note that the analysis here does not explicitly require uniqueness of the physical invariant state, and only relies on the characterization stated in Theorem \ref{th-reduction} (and proved in Section \ref{proof-thm}).
\subsection{A reduction}
\label{subsec-reduc}
In this section we  show that the proof of Theorem \ref{thm-dec} can be reduced to establishing certain key estimates stated in Proposition \ref{prop-upbnd} below. We first introduce some relevant notation. Define
\begin{equation}
\label{def-hatbeta}
\hat{\beta}:=\beta-\lfloor \beta \rfloor,
\end{equation}
where $\lfloor \beta \rfloor$ denotes the largest integer less than or equal to $\beta$. Throughout this section, given $m\in\N$, $n\in\N$ and $x_i\in\R$ for all $i$,  we will use the convention that 
$$ \prod_{i=m}^n x_i=1\quad\mbox{ if } m>n.$$

\begin{proposition}
\label{prop-upbnd}
Suppose Assumptions \ref{ass-G} - \ref{ass-service}  hold. Let  $(s_\ell^*)_{\ell \in\N}$ be the queue length
distribution associated with a physical invariant state of the hydrodynamic equations. If $\beta$ is not an integer, then there exists  $1\leq C_\beta<\infty$ such that for all $\ell\geq 2$, 
\begin{equation}
\label{ineq-notinteger}
\ssl\leq \lambda  (3C_\beta)^{\ell-1}\left(\prod_{i=\ell-\lfloor\beta\rfloor +1}^{\ell-1}(s_i^*)^{d-1}\right)(s_{\ell-\lfloor\beta\rfloor}^*)^{\hat{\beta}(d-1)}.
\end{equation}
If $\beta$ is an integer, then there exists  $\delta\in (0,1)$, 
and $1\leq C_\beta<\infty$ such that for all $\ell\geq 2$,
\begin{equation}
\label{ineq-integer}
\ssl\leq \lambda(3C_\beta)^{\ell-1}\left(\prod_{i=\ell-\beta +2}^{\ell-1}(s_i^*)^{d-1}\right)(s_{\ell-\beta+1}^*)^{(1-\delta)(d-1)}.
\end{equation}
\end{proposition}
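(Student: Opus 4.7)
The strategy is two-fold: first derive a pointwise recursive upper bound on $r_\ell$ by iterating the integral equation \eqref{eqn-vsl} a controlled number of times, and then integrate against $\bar G(x)\,dx$, controlling the resulting moment integrals using the polynomial tail decay in Assumption \ref{ass-service}.

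Setting $\alpha_\ell := \lambda\mathfrak{P}_d(\sslm,\ssl)$ and using the monotonicity $r_{\ell-1}(u)\le r_{\ell-1}(x)$ for $u\le x$ from Lemma \ref{vprop} directly in \eqref{eqn-vsl} yields
\[
r_\ell(x)\le \lambda(\ssl)^d e^{-\alpha_\ell x}+r_{\ell-1}(x)(1-e^{-\alpha_\ell x}).
\]
Iterating this inequality $k$ times and invoking $r_{\ell-k}\le r_1\equiv \lambda$ (from Lemmas \ref{vprop} and \ref{lem-s1eqbm}) produces the expansion
\[
r_\ell(x)\le \sum_{j=0}^{k-1}\lambda(s_{\ell-j}^*)^d e^{-\alpha_{\ell-j}x}\prod_{i=0}^{j-1}(1-e^{-\alpha_{\ell-i}x})+\lambda\prod_{i=0}^{k-1}(1-e^{-\alpha_{\ell-i}x}).
\]
The plan is to take $k=\lfloor\beta\rfloor$ when $\beta\notin\N$ and $k=\lfloor\beta\rfloor-1$ when $\beta\in\N$.

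Next I would integrate against $\bar G(x)\,dx$. Using the elementary inequality $1-e^{-\alpha x}\le(\alpha x)^{p}\wedge 1$ valid for every $p\in(0,1]$ and $\alpha x\ge 0$, with $p=1$ applied to the first $k-1$ factors in the final product and $p=\hat\beta$ (respectively $p=1-\delta$ for a small $\delta\in(0,1)$ in the integer case) to the last factor, the critical building block admits the bound
\[
\int_0^L\prod_{i=0}^{k-1}(1-e^{-\alpha_{\ell-i}x})\bar G(x)\,dx\le\Bigl(\prod_{i=0}^{k-2}\alpha_{\ell-i}\Bigr)\alpha_{\ell-k+1}^{\,p}\int_0^L x^{k-1+p}\bar G(x)\,dx.
\]
In the non-integer case, the exponent $k-1+p=\beta-1$ sits exactly on the borderline of divergence for the remaining moment integral, so I would split the integration at some $T>x_0$, use $\bar G\le 1$ on $[0,T]$ and the tail bound $\bar G(x)\le C_0 x^{-\beta}$ on $[T,L)$, and choose $T$ (depending on the $\alpha_{\ell-i}$'s) so as to keep both pieces uniformly finite, yielding a constant $C_\beta$ depending only on $\beta$ and $C_0$. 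In the integer case, $k-1+p=\beta-1-\delta$ lies strictly below the divergence threshold, so the moment integral is finite directly.

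Finally, substituting $\alpha_i\le\lambda d(s_{i-1}^*)^{d-1}$ from \eqref{Pdmax} and reindexing yields the product $\prod_{i=\ell-\lfloor\beta\rfloor+1}^{\ell-1}(s_i^*)^{d-1}(s_{\ell-\lfloor\beta\rfloor}^*)^{\hat\beta(d-1)}$ of \eqref{ineq-notinteger} (and the analogous one in \eqref{ineq-integer}). The intermediate summation terms $\lambda(s_{\ell-j}^*)^d\int_0^L e^{-\alpha_{\ell-j}x}\prod_{i<j}(1-e^{-\alpha_{\ell-i}x})\bar G(x)\,dx$ must then be absorbed: the $j=0$ term $\lambda(\ssl)^d$ is moved to the left-hand side, which is legitimate since $\ssl\le\lambda<1$ gives $1-\lambda(\ssl)^{d-1}>0$, while the $j\ge 1$ terms admit analogous moment estimates and are of higher order in the small $s_i^*$'s, hence absorbable at the cost of the geometric prefactor $(3C_\beta)^{\ell-1}$. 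The main obstacle I expect is precisely the borderline moment estimate in the non-integer case: extracting the exact exponent $\hat\beta(d-1)$ in \eqref{ineq-notinteger} (as opposed to a strictly smaller $(\hat\beta-\varepsilon)(d-1)$) requires the splitting to be executed with the proper choice of $T$ as a function of the $\alpha_{\ell-i}$'s; the same borderline phenomenon, now genuinely divergent at $p=\hat\beta=0$, is what forces the introduction of the $(1-\delta)$ factor in \eqref{ineq-integer}.
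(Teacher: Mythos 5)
Your pointwise recursion for $r_\ell$ is a clean and valid alternative to the paper's Lemma \ref{prop-rineq}: bounding the integral $\alpha_\ell\int_0^x e^{-\alpha_\ell(x-u)}r_{\ell-1}(u)\,du\le r_{\ell-1}(x)(1-e^{-\alpha_\ell x})$ via the monotonicity from Lemma \ref{vprop}, and iterating only $k=\lfloor\beta\rfloor$ (resp.\ $\lfloor\beta\rfloor-1$) times, produces a truncated expansion that is just as serviceable as the paper's full expansion to $r_1$ (the paper also effectively discards the excess factors as $\le 1$).

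The gap is in the integral estimation, and it is not fixable by choosing $T$ cleverly. Once you drop the $\wedge 1$ structure and bound
$\prod_{i=0}^{k-1}(1-e^{-\alpha_{\ell-i}x})\le A\, x^{\beta-1}$ with $A:=\bigl(\prod_{i=0}^{k-2}\alpha_{\ell-i}\bigr)\alpha_{\ell-k+1}^{\hat\beta}$, the two pieces of the split read roughly as $\int_0^T A x^{\beta-1}\bar G(x)\,dx$ and $\int_T^\infty \bar G(x)\,dx$. Using $\bar G\lesssim x^{-\beta}$ on $[x_0,\infty)$, the first piece behaves like $A\bigl(c_1+\log(T/x_0)\bigr)$ and the second like $T^{-(\beta-1)}$. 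There is no value of $T$ (nor of the fractional exponent $p$ put on the last factor) that makes both pieces $\lesssim A$: any $T$ small enough to kill the $\log$ leaves the tail piece $\asymp$ a fixed constant $\not\lesssim A$, while any $T$ large enough to make the tail piece $\lesssim A$ forces $T\gtrsim A^{-1/(\beta-1)}$ and hence a $\log(1/A)$ factor. The optimized bound is therefore of order $A\log(1/A)$, strictly weaker than the stated $A$, so the exact exponent $\hat\beta(d-1)$ in \eqref{ineq-notinteger} cannot be reached this way. The paper's Lemmas \ref{lem-int} and \ref{lem-integrals} avoid this loss precisely by not flattening the product to a single power of $x$: they partition the domain at the transition points $m_i$ and, on each piece $[m_{n-1},m_n)$, the product is a lower-degree monomial $\prod_{i\ge n}m_i^{-1}\cdot y^{\ell-n}$, so each piece integrates without a logarithm and the contributions telescope to exactly $\prod(s_i^*)^{d-1}(s_{\ell-\lfloor\beta\rfloor}^*)^{\hat\beta(d-1)}$. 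So the flagged ``borderline'' difficulty is real, but the resolution requires the interval-by-interval decomposition, not a tuned split point. (A secondary, less serious issue: the claim that the intermediate $j\ge 1$ summands are ``of higher order and hence absorbable at the cost of $(3C_\beta)^{\ell-1}$'' is plausible but needs the recursive constant bookkeeping $K_\ell\le 3^{\ell-1}$ that the paper spells out.) If one is willing to weaken the proposition to an exponent $(\hat\beta-\varepsilon)(d-1)$ for small $\varepsilon>0$, your route does work and still feeds into Theorem \ref{thm-dec}, but that is not the statement at hand.
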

The proof of Proposition \ref{prop-upbnd}, which involves a careful analysis of the equations characterizing the invariant state, is lengthy and deferred to Section \ref{subsec-proof dec}.  We now use this result to prove Theorem \ref{thm-dec} in two steps. First, in Lemma \ref{prop-Lmin}, we show that Proposition \ref{prop-upbnd} directly implies that the sequence $\{\ssl\}_{\ell\in\N}$ has at least an exponentially fast decay rate. We then combine this {\em{a priori}} estimate with a result on inhomogeneous recursions (Lemma \ref{prop-dec} below) to show that the decay rate is at least  doubly exponential.

\begin{lemma}
\label{prop-Lmin}
Suppose Assumptions \ref{ass-G} -- \ref{ass-service}  hold. Let  $(s_\ell^*)_{\ell \in\N}$ be the queue length
distribution associated with a physical invariant state of the hydrodynamic equations and let $C_\beta\in[1,\infty)$ and $\delta\in(0,1)$ be as in Proposition \ref{prop-upbnd}. If $\beta$ is not an integer, then there exists $M(\beta)<\infty$ such that for all $\ell\geq M(\beta)$,
\begin{equation}
\label{Lmin-noint}
\lambda  (3C_\beta)^{\ell-1}\left(\prod_{i=\ell-\lfloor\beta\rfloor +1}^{\ell-1}(s_i^*)^{d-1}\right)(s_{\ell-\lfloor\beta\rfloor}^*)^{\hat{\beta}(d-1)}<1.
\end{equation}
If $\beta$ is an integer, then there exists $M(\beta)<\infty$ such that for all $\ell\geq M(\beta)$,
\begin{equation}
\label{Lmin-int}
 \lambda(3C_\beta)^{\ell-1}\left(\prod_{i=\ell-\beta +2}^{\ell-1}(s_i^*)^{d-1}\right)(s_{\ell-\beta+1}^*)^{(1-\delta)(d-1)}<1.
 \end{equation}
\end{lemma}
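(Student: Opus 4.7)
My plan is to derive the inequality from three ingredients: Proposition~\ref{prop-upbnd}, the monotonicity of $(s_\ell^*)$ from Lemma~\ref{vprop}, and the tail condition $s_\ell^* \to 0$ that characterizes a physical invariant state. The structural fact that ultimately drives the argument is the strict inequality $(d-1)(\beta-1) > 1$ guaranteed by Assumption~\ref{ass-service}.

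First, since $r_{\ell+1} \leq r_\ell$ pointwise on $[0,L)$ by Lemma~\ref{vprop}, the relation $s_\ell^* = \int_0^L r_\ell(x)\bar G(x)\, dx$ shows that $(s_\ell^*)$ is non-increasing, so $s_i^* \leq s_{\ell-\lfloor\beta\rfloor}^*$ for all $i \geq \ell - \lfloor\beta\rfloor$. The total exponent on the $s_i^*$ factors in \eqref{Lmin-noint} equals $(d-1)(\lfloor\beta\rfloor - 1) + \hat\beta(d-1) = (d-1)(\beta-1) =: \gamma$, which exceeds $1$ since $\beta > d/(d-1)$. Applying monotonicity gives
\[
\prod_{i=\ell-\lfloor\beta\rfloor+1}^{\ell-1}(s_i^*)^{d-1}\cdot(s_{\ell-\lfloor\beta\rfloor}^*)^{\hat\beta(d-1)} \leq (s_{\ell-\lfloor\beta\rfloor}^*)^\gamma,
\]
so it suffices to show $\lambda(3C_\beta)^{\ell-1}(s_{\ell-\lfloor\beta\rfloor}^*)^\gamma < 1$ for $\ell$ large.

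The main step is to bootstrap exponential (in fact doubly exponential) decay of $s_\ell^*$ from the self-referential recursion $s_\ell^* \leq \lambda(3C_\beta)^{\ell-1}(s_{\ell-\lfloor\beta\rfloor}^*)^\gamma$ obtained by combining the previous display with Proposition~\ref{prop-upbnd}. Passing to logarithms with $u_\ell := -\log s_\ell^*$ and $A := \log(3C_\beta)$, this reads $u_\ell \geq \gamma\, u_{\ell-\lfloor\beta\rfloor} - (\ell-1)A + \log(1/\lambda)$. Iterating $n$ times along the progression $\ell, \ell - \lfloor\beta\rfloor, \ldots, \ell - n\lfloor\beta\rfloor$ and telescoping the geometric-arithmetic sums, using that for large $n$ one has $\sum_{j=0}^{n-1}\gamma^j(\ell - 1 - j\lfloor\beta\rfloor) \sim \gamma^n\bigl[(L_0-1)/(\gamma-1) + \lfloor\beta\rfloor\gamma/(\gamma-1)^2\bigr]$ with $L_0 := \ell - n\lfloor\beta\rfloor$, gives
\[
u_\ell \geq \gamma^n\Bigl[u_{L_0} - \tfrac{A(L_0-1)}{\gamma-1} + O(1)\Bigr].
\]
The qualitative fact $u_\ell \to \infty$ is then leveraged to choose $L_0$ for which the bracket is strictly positive, yielding $u_\ell \gtrsim \gamma^{\ell/\lfloor\beta\rfloor}$ — doubly exponential decay of $s_\ell^*$. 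In particular, $(3C_\beta)^{\ell-1}(s_{\ell-\lfloor\beta\rfloor}^*)^\gamma \to 0$, delivering \eqref{Lmin-noint} for $\ell \geq M(\beta)$.

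The integer case \eqref{Lmin-int} is handled identically: the total exponent becomes $(d-1)(\beta-2) + (1-\delta)(d-1) = (d-1)(\beta-1-\delta)$, which still exceeds $1$ because $\delta \in (0,1)$ is chosen small enough in Proposition~\ref{prop-upbnd}, as is possible since $(d-1)(\beta-1) > 1$. The main obstacle is the bootstrap: verifying that the bracket $u_{L_0} - A(L_0-1)/(\gamma-1) + O(1)$ can be made strictly positive from the purely qualitative input $s_\ell^* \to 0$ is the delicate technical point, and it is precisely the geometric amplification by $\gamma^n$ with $\gamma > 1$ — equivalent to Assumption~\ref{ass-service} — that lets the iteration dominate the inhomogeneous linear penalty and close the argument.
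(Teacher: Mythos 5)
Your approach is genuinely different from the paper's. You iterate the logarithmic recursion $u_\ell \geq \gamma\, u_{\ell-\lfloor\beta\rfloor} - (\ell-1)A + \log(1/\lambda)$ backward from $\ell$ to a base point $L_0$, telescope the inhomogeneities, and try to produce a strictly positive bracket that is then amplified by $\gamma^n$. The paper instead runs a \emph{forward} induction on the ansatz $s_\ell^* \leq c_k e^{-k\ell}$ with $c_k := e^{k(L-1)}$ and $L$ a fixed explicit constant: the base case for $\ell \leq L-1$ follows trivially from $s_\ell^* \leq 1$, and the inductive step for $m\geq L-1$ is closed by tuning $k$ and $L$ so that the $\gamma = 1+\alpha$ amplification beats the $(3C_\beta)^m$ prefactor. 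Crucially, the paper's argument never needs the qualitative tail condition $s_\ell^*\to 0$.

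The step in your proposal that does not go through is the claim that ``the qualitative fact $u_\ell\to\infty$ is then leveraged to choose $L_0$ for which the bracket is strictly positive.'' The bracket is (up to explicit constants) $u_{L_0} - A(L_0-1)/(\gamma-1) - A\lfloor\beta\rfloor\gamma/(\gamma-1)^2 + \log(1/\lambda)/(\gamma-1)$ and contains a subtracted term that grows linearly in $L_0$ at the specific slope $A/(\gamma-1)$. Unboundedness of $u_\ell$, or even linear growth at the wrong slope, does not guarantee the bracket is ever positive. Concretely, take $u_{L_0} = \log(1/\lambda) + A(L_0-1)/\gamma$, i.e.\ $s_\ell^* = \lambda(3C_\beta)^{-(\ell-1)/\gamma}$: this sequence is strictly decreasing, has $s_1^*=\lambda$, tends to $0$, and satisfies the simplified a priori bound $s_\ell^* \leq \lambda(3C_\beta)^{\ell-1}(s_{\ell-\lfloor\beta\rfloor}^*)^\gamma$ (the right side then equals the constant $\lambda^{1+\gamma}(3C_\beta)^{\lfloor\beta\rfloor}$, which exceeds the exponentially decaying left side for $\lambda$ not too small); yet $u_{L_0} - A(L_0-1)/(\gamma-1) = \log(1/\lambda) - A(L_0-1)/[\gamma(\gamma-1)] \to -\infty$, so the bracket is eventually negative and in fact never positive for $\lambda$ close to $1$. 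The geometric amplification by $\gamma^n$ cannot repair this: if the bracket is nonpositive, multiplying by $\gamma^n$ only makes the lower bound on $u_\ell$ increasingly vacuous. To make your route close you would need a \emph{quantitative} a priori estimate showing $u_\ell$ eventually grows faster than $A(\ell-1)/(\gamma-1)$, which is precisely what the paper's forward induction is designed to manufacture directly from the recursion and the trivial bound $s_\ell^*\leq 1$, without any appeal to $s_\ell^*\to 0$.
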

\begin{proof}
First note that Proposition \ref{prop-upbnd} implies that for some $C_\beta\in[1,\infty)$, $\delta\in(0,1)$ and $\ell\geq 2$,
\begin{align}
\label{apriori-upbnd}
\ssl\leq \left\{\begin{array}{lr}
\lambda  (3C_\beta)^{\ell-1}(s_{\ell-\lfloor\beta\rfloor}^*)^{(\beta-1)(d-1)}&\qquad\text{if }\beta\notin\N,\\
\lambda  (3C_\beta)^{\ell-1}(s_{\ell-\beta+1}^*)^{(\beta-1-\delta)(d-1)}&\qquad\text{if }\beta\in\N.
\end{array}
\right.
\end{align}
We consider two cases:\\
\textit{Case 1}: $\beta$ is not an integer. In this case, define 
\begin{equation}
\label{def-alpha}
\alpha:=(\beta-1)(d-1)-1,
\end{equation} 
and
\begin{equation}
\label{def-k}
k:=\frac{2}{\alpha}\log (3C_\beta).
\end{equation}
Note that $\beta>\frac{d}{d-1}$ and $C_\beta>1$ imply $\alpha>0$ and $k>0$. 
Let 
\begin{equation}
\label{def-L}
L:=\left\lceil\frac{(\lfloor\beta\rfloor+1+(\lfloor\beta\rfloor-1)\alpha)2}{\alpha}\right\rceil+1,
\end{equation}
where $\lceil x\rceil$ denotes the smallest integer greater than or equal to $x$. 
Now, define $c_k:=e^{k(L-1)}$. Since $k>0$ and $\ssl\leq 1$ for all $\ell$, 
\[\ssl\leq c_ke^{-k(L-1)}\leq c_k e^{-k\ell}\qquad \forall\ell\leq L-1.\]
As the inductive hypothesis, assume that there exists $m\geq L-1$ such that
\begin{equation}
\label{si-induct}
s_i^*\leq c_k e^{-ki},
\end{equation}
for all $i\leq m$. Applying \eqref{apriori-upbnd} with $\ell=m+1$ for the case $\beta\notin\N$, using \eqref{def-alpha}, \eqref{def-k} and the induction hypothesis \eqref{si-induct}, we see that $s_{m+1}^*$ is bounded above by 
\begin{align*}
 \lambda e^{\frac{\alpha}{2}km}(s_{m+1-\lfloor\beta\rfloor}^*)^{1+\alpha}
\leq & \lambda e^{\frac{\alpha}{2}km}c_ke^{-k(m+1-\lfloor\beta\rfloor)(1+\alpha)}
=c_ke^{-k(m+1)}\lambda e^{k(-\frac{\alpha m}{2}-\alpha+\lfloor\beta\rfloor(1+\alpha))}
\leq c_k e^{-k(m+1)},
\end{align*}
where the last inequality uses $\lambda\leq 1$, $m\geq L-1$ and \eqref{def-L}.
Thus, by the inductive hypothesis, \eqref{si-induct} holds for all $i\in\Z$. Together with the fact that $i\rightarrow s_i^*$ is non-decreasing, \eqref{def-alpha} and \eqref{def-k}, this implies
\begin{align*}
 \lambda  (3C_\beta)^{\ell-1}\left(\prod_{i=\ell-\lfloor\beta\rfloor +1}^{\ell-1}(s_i^*)^{d-1}\right)(s_{\ell-\lfloor\beta\rfloor}^*)^{\hat{\beta}(d-1)}
 \leq & \lambda  (3C_\beta)^{\ell-1}(s_{\ell-\lfloor\beta\rfloor}^*)^{(\beta-1)(d-1)}\\
 \leq & \lambda c_ke^{k(\lfloor\beta\rfloor(1+\alpha)-\frac{\alpha}{2})}e^{-k\ell(1+\frac{\alpha}{2})}.
\end{align*}
Since 
$\lim_{\ell\rightarrow\infty}e^{-k\ell(1+\frac{\alpha}{2})}=0$,
there exists $M(\beta)<\infty$ such that \eqref{Lmin-noint} holds for all $\ell\geq M(\beta)$.

\textit{Case 2}: $\beta$ is  an integer. In this case the proof of \eqref{Lmin-int} follows the same argument as in the non-integral case but with $\alpha:=(\beta-1-\delta)(d-1)-1$, $k:=\frac{2}{\alpha}\log (3C_\beta)$ and 
\begin{equation*}
L:=\left\lceil\frac{(\beta+(\beta-\frac{3}{2})\alpha)2}{\alpha}\right\rceil+1,
\end{equation*}
where note that $\beta\geq 2$ (since $\beta>d/(d-1)$ and $\beta\in\N$) and $C_\beta>1$ imply $\alpha>0$ and $k>0$.
\end{proof}

\begin{lemma}
\label{prop-dec}
Given strictly positive numbers $c_1$, $c_2$ and $(R_\ell)_{\ell\geq -j+1}$, for every $\eta \in(0,1)$ and $j\in\N$ such that $(d-1)(j+\eta-1)>1$, there exists $n_d=n_d(j,\eta)\in (0,\infty)$ such that if 
\begin{equation}
\label{def-Rl}
 R_\ell =c_1-(\ell-1)c_2+(d-1) \left(\sum_{i=\ell-j+1}^{\ell-1}  R_i +\eta R_{\ell-j}\right) \quad \text{for } \ell\geq1,
 \end{equation}
 then  
 \begin{equation*}
 \lim_{\ell\rightarrow\infty} \frac{1}{\ell} \log_d R_\ell = n_d.
 \end{equation*}
 \end{lemma}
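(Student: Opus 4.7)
The plan is to recognize the recurrence in \eqref{def-Rl} as a linear inhomogeneous recurrence of order $j$ with constant coefficients and a polynomial (affine) forcing term, and to extract the exponential growth rate by combining standard linear-recurrence theory with Perron--Frobenius positivity.

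As a first step I would analyze the characteristic polynomial associated with the homogeneous part of the recurrence,
\[
p(x):=x^{j}-(d-1)\sum_{i=1}^{j-1}x^{i}-(d-1)\eta.
\]
One checks that $p(0)=-(d-1)\eta<0$, $p(1)=1-(d-1)(j+\eta-1)<0$ by the standing hypothesis, and $p(x)\to\infty$ as $x\to\infty$, so the intermediate value theorem produces a real root $r^{*}>1$. Descartes' rule of signs (the coefficient sequence exhibits exactly one sign change) further implies $r^{*}$ is the unique positive real root. Moreover, the companion matrix $M$ of the homogeneous recurrence has non-negative entries and is irreducible since $\eta>0$, so Perron--Frobenius yields that $r^{*}$ is a simple eigenvalue coinciding with the spectral radius of $M$, strictly dominating all other eigenvalues in modulus, with strictly positive left and right Perron eigenvectors. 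The natural candidate for the constant in the lemma is thus $n_{d}:=\log_{d} r^{*}>0$.

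Next, I would handle the forcing term $c_{1}-(\ell-1)c_{2}$ by seeking a particular solution of the form $R_{\ell}^{p}=A+B\ell$. Substituting into \eqref{def-Rl} and matching the coefficients of $\ell$ and of the constants yields
\[
B=\frac{c_{2}}{(d-1)(j+\eta-1)-1}>0,
\]
together with an explicit constant $A$; both are well defined precisely because $p(1)\neq 0$. Setting $\tilde R_{\ell}:=R_{\ell}-R_{\ell}^{p}$ gives a sequence obeying the homogeneous recurrence, so writing $\mathbf{v}_{\ell}:=(\tilde R_{\ell},\tilde R_{\ell-1},\ldots,\tilde R_{\ell-j+1})^{T}$ one has $\mathbf{v}_{\ell}=M^{\ell}\mathbf{v}_{0}$. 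The spectral gap provided by Perron--Frobenius then yields a decomposition $M^{\ell}=(r^{*})^{\ell}P+O(\rho^{\ell})$ with $\rho<r^{*}$ and $P$ the rank-one spectral projector onto the Perron eigenspace, so that
\[
R_{\ell}=\kappa\,(r^{*})^{\ell}+A+B\ell+o((r^{*})^{\ell}),\qquad\ell\to\infty,
\]
for some $\kappa\in\mathbb{R}$ determined by the initial data. Once $\kappa>0$ is established, dividing by $(r^{*})^{\ell}$ and taking $\log_{d}/\ell$ immediately yields the claimed limit $n_{d}=\log_{d} r^{*}$.

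The main obstacle is therefore to show $\kappa>0$, and I would argue both corresponding upper and lower bounds directly. For the upper bound, a straightforward induction on \eqref{def-Rl} produces some $C<\infty$, depending on $c_{1},c_{2}$ and the initial data, such that $R_{\ell}\leq C(r^{*})^{\ell}$ for all $\ell$, giving $\limsup_{\ell\to\infty}\ell^{-1}\log_{d}R_{\ell}\leq n_{d}$. For the lower bound, I would use that the initial values $R_{-j+1},\ldots,R_{0}$ are all strictly positive, and that for large $\ell$ the forcing satisfies $|c_{1}-(\ell-1)c_{2}|\leq K\ell$, which is negligible compared with the positive homogeneous part of the recurrence once one has even a polynomial lower bound in hand. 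A bootstrap argument then shows that $R_{\ell}$ stays positive and, together with the strict positivity of the right Perron eigenvector $\mathbf{u}$, yields $\mathbf{e}_{1}^{T}M^{\ell}\mathbf{v}_{0}\geq c\,(r^{*})^{\ell}$ for some $c>0$ depending only on $M$ and $\mathbf{v}_{0}$. Combined with the negligibility $A+B\ell=o((r^{*})^{\ell})$, this produces $\liminf_{\ell\to\infty}\ell^{-1}\log_{d}R_{\ell}\geq n_{d}$, and together with the matching upper bound completes the proof.
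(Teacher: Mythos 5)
Your proposal takes a genuinely different route from the paper's. The paper differences the recursion twice to eliminate the affine forcing, arriving at a homogeneous recurrence of order $j+2$ whose reciprocal characteristic polynomial factors as $(1-x)^2P(x)$ with $P(x)=1-(d-1)\sum_{i=1}^{j-1}x^i-(d-1)\eta x^j$, and then combines a general linear-recurrence decomposition theorem with Rouch\'e's theorem to identify $1/\gamma_2$ as the unique smallest root of $P$ in modulus. You instead strip off a particular affine solution $A+B\ell$ (valid since $p(1)\neq 0$) and apply Perron--Frobenius to the non-negative, irreducible, primitive companion matrix $M$ of the order-$j$ homogeneous part, whose characteristic polynomial $p(x)=x^jP(1/x)$ has $r^*=\gamma_2$ as its Perron root. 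The Perron--Frobenius argument gives simplicity and strict spectral dominance of $r^*$ in one stroke, replacing the Rouch\'e step; this is a sound and arguably cleaner alternative, and your upper-bound induction using $(d-1)\bigl[\sum_{i=1}^{j-1}(r^*)^i+\eta\bigr]=(r^*)^j$ is correct.

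The genuine gap is in the lower bound, i.e.\ in showing $\kappa>0$. You assert that positivity of the right Perron eigenvector $\mathbf{u}$, together with a bootstrap giving $R_\ell>0$ (which is in any case a hypothesis, not something to be shown), yields $\mathbf{e}_1^TM^\ell\mathbf{v}_0\geq c(r^*)^\ell$. But $\mathbf{v}_0$ consists of the \emph{shifted} values $\tilde R_\ell=R_\ell-(A+B\ell)$, which need not be non-negative even when every $R_\ell$ is; and $\kappa$ is proportional to $\mathbf{w}^T\mathbf{v}_0$ for the \emph{left} Perron vector $\mathbf{w}>0$, which can vanish or be negative. This is not merely a presentational issue: with $d=2$, $j=2$, $\eta=\tfrac12$, $c_1=c_2=1$, $R_{-1}=2$, $R_0=4$, one checks directly that $R_\ell=4+2\ell$ satisfies \eqref{def-Rl} with all terms strictly positive, yet $\ell^{-1}\log_2 R_\ell\to 0$ rather than $\log_2\frac{1+\sqrt3}{2}>0$. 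So $\kappa>0$ cannot be deduced from positivity of $(R_\ell)$ alone; additional structure of the initial data would have to be used. (For what it is worth, the paper's own proof passes from $|\gamma_i|<\gamma_2$ for $i\geq 3$ directly to the stated limit without verifying that the coefficient of $\gamma_2^\ell$ in the expansion of $R_\ell$ is nonzero, so it leaves this same point unaddressed; nonetheless, your bootstrap sketch as written does not close it either.)
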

 \begin{proof}
 Subtracting $R_{\ell-1}$ from $R_\ell$ and using \eqref{def-Rl}, we see that for $\ell\geq 2$,
\begin{align*}
 & R_\ell=-c_2 + R_{\ell-1} + (d-1)\left(R_{\ell-1}-(1-\eta) R_{\ell-j}-\eta R_{\ell-j-1}\right).
\end{align*}
Subtracting $R_{\ell-1}$ from $R_\ell$ again,  using the above equation, and rearranging, we have for $\ell\geq 3$,
\begin{align*}
R_\ell=(d+1)R_{\ell-1}-dR_{\ell-2}+ (d-1)\left(-(1-\eta)R_{\ell-j}+(1-2\eta)R_{\ell-j-1}+\eta R_{\ell-j-2}\right).
\end{align*}
 Define $F=F_{j,\eta}:\R\mapsto\R$ by
\begin{equation}
\label{def-polyF}
F(x):=1-(d+1)x+dx^2-(d-1)(-(1-\eta)x^{j}+(1-2\eta)x^{j+1}+\eta x^{j+2}).
\end{equation}
Noting that $0$ is not a root of $F$, let ${1}/{\gamma_i}$ be the $k$ distinct roots of $F$, with $k\leq j+2$, and let $d_i$ denotes the multiplicity of the root ${1}/{\gamma_i}$. Then
$F(x)=\sum_{i=1}^k(1-\gamma_ix)^{d_i}.$
Since $j\geq 1$ and $\eta (d-1)\neq 0$, by Theorem 4.1.1 of \cite{Stan86}, 
\begin{equation}
\label{Rell-expansion}
R_\ell=\sum_{i=1}^k \phi_i(\ell)\gamma_i^\ell, \qquad\forall \ell\geq 0,
\end{equation}
where 
 $\phi_i(\ell)$ is a polynomial in $\ell$ of degree no less than $d_i$. 
 
 A direct inspection of \eqref{def-polyF} shows that $F(1)=0$, the root $\gamma_1=1$ has degree $d_1=2$ , and $F(x)=(1-x)^2 P(x)$, where
\[P(x)=\left(1-(d-1)\sum_{i=1}^{j-1}x^i-(d-1)\eta x^{j}\right).\]
 By Descartes' rule of signs, there exists exactly one positive root of $P(\cdot)$. Let us denote it by $1/\gamma_2$. Note that $P(0)=1$ and $P(1)<0$ since $(d-1)(j+\eta - 1)>1$ by assumption. Thus,  $1/\gamma_2\in(0,1)$ or equivalently $\gamma_2>1$.  Using Rouche's theorem (see \cite[Exercise 237, page 321]{NarNie12}) it is easy to show that $P(\cdot)$ has no roots other than $1/\gamma_2$ in the disc $\{z:|z|\leq 1/\gamma_2\}$. Hence, $|\gamma_i|<\gamma_2$ for $i\geq 3$.  In view of the representation for $R_\ell$ in \eqref{Rell-expansion}, this implies
\begin{equation*}
\lim_{\ell\rightarrow\infty}\frac{ \log_d R_\ell}{\ell}=\log_d \gamma_2.
\end{equation*}
Setting $n_d=\log_d\gamma_2$ proves the claim.
 \end{proof}
 
\begin{proof}[Proof of Theorem \ref{thm-dec}]
Fix a physical invariant state $(\nusl)_{\ell\in\N}$ of the hydrodynamic equations, and let $(\ssl)_{\ell\in\N}$ be the associated queue length distribution. 
We consider two cases:\\
{\emph{Case 1:}} $\beta$ is not an integer. Let $C_\beta\in [0,\infty)$ be as in Proposition \ref{prop-upbnd}. In this case, using the monotonicity of the logarithm function along with \eqref{ineq-notinteger}, we see that for $\ell\geq 2$
\begin{align*}
\log \left(\frac{1}{\ssl}\right)\geq & -\log\lambda-(\ell-1)\log (3C_\beta)+(d-1)\left(\sum_{i=\ell-\lfloor\beta\rfloor+1}^{\ell-1}\log \left(\frac{1}{s_i^*}\right)+\hat{\beta}\log \left(\frac{1}{s_{\ell-\lfloor\beta\rfloor}^*}\right)\right).
\end{align*} 
 By Lemma \ref{prop-Lmin}, there exists $M=M(\beta)<\infty$ such that 
$$ \lambda  (3C_\beta)^{\ell-1}\left(\prod_{i=\ell-\lfloor\beta\rfloor +1}^{\ell-1}(s_i^*)^{d-1}\right)(s_{\ell-\lfloor\beta\rfloor}^*)^{\hat{\beta}(d-1)}<1,\qquad \forall \ell\geq M.$$
Set $Q_{i}:=-\log (s_i^*)$   for $\{M-\lfloor\beta\rfloor,\cdots, M-1\}$, and recursively define
\begin{equation*}
Q_\ell:=-\log\lambda-(\ell-1)\log (3C_\beta)+(d-1)\left(\sum_{i=\ell-\lfloor\beta\rfloor+1}^{\ell-1}Q_i+\hat{\beta}Q_{\ell-\lfloor\beta\rfloor}\right), \qquad\forall \ell\geq M.
\end{equation*}
Then, applying Lemma \ref{prop-dec} with $j=\lfloor\beta\rfloor$, $\eta=\hat{\beta}$ (noting that $(d-1)(\beta-1)>1$), $c_1=-\log\lambda>0$, $c_2=\log(3C_\beta)>0$ and $R_\ell:=Q_{M+\ell-1}$ for $ \ell\geq -\lfloor\beta\rfloor+1$, we conclude there exists $n_d=n_d(\beta,\delta)\in(0,\infty)$ such that
\begin{equation*}
\lim_{\ell\rightarrow\infty}\frac{\log Q_\ell}{\ell}=n_d.
\end{equation*}
Since $\log\left(1/{s_i^*}\right)\geq Q_i$ for all $i\geq M$, this proves \eqref{dbdecay}, in this case.\\
{\emph{Case 2}:} $\beta$ is an integer. Let $C_\beta\in[0,\infty)$ and $\delta\in (0,1)$ be as in Proposition \ref{prop-upbnd}. Then \eqref{ineq-integer}  implies that for every $\ell\geq 2$, 
\begin{align*}
\log \left(\frac{1}{\ssl}\right)\geq & -\log\lambda-(\ell-1)\log (3C_\beta)+(d-1)\left(\sum_{i=\ell-\beta+2}^{\ell-1}\log \left(\frac{1}{s_i^*}\right)+(1-\delta)\log \left(\frac{1}{s_{\ell-\beta+1}^*}\right)\right).
\end{align*} 
By Lemma \ref{prop-Lmin}, there exists $M=M(\beta)<\infty$ such that 
$$ \lambda  (3C_\beta)^{\ell-1}\left(\prod_{i=\ell-\beta +2}^{\ell-1}(s_i^*)^{d-1}\right)(s_{\ell-\beta+1}^*)^{(1-\delta)(d-1)}<1,\qquad\forall\ell\geq M.$$
Now, set $Q_{i}:=-\log (s_i^*)$   for $i\in\{M-\beta+1,\cdots, M-1\}$ and recursively define
\begin{equation*}
Q_\ell=-\log\lambda-(\ell-1)\log (3C_\beta)+(d-1)\left(\sum_{i=\ell-\beta+2}^{\ell-1}Q_i+(1-\delta)Q_{\ell-\beta+1}\right), \qquad\forall \ell\geq M.
\end{equation*}
 Since $\beta$ is an integer, $(d-1)(\beta-1)>1$ implies $\beta\geq 2$.
 Since $(1-\delta)\in(0,1)$, $\beta-1\geq 1$, $d\geq 2$, we have $(d-1)(\beta-1-\delta)> 1$. Thus, applying Lemma \ref{prop-dec} with $j=\beta-1$, $\eta=1-\delta$ and $R_\ell=Q_{M+\ell-1}$ for $ \ell\geq -\beta$, we conclude there exists $n_d=n_d(\beta,\delta)\in(0,\infty)$ such that
\begin{equation*}
\lim_{\ell\rightarrow\infty}\frac{\log Q_\ell}{\ell}=n_d.
\end{equation*}
Since $\log\left(1/{s_i^*}\right)\geq Q_i$ for all $i\geq M$, this proves \eqref{dbdecay}, for Case 2 as well.
\end{proof}
\begin{remark}
The estimates for $(s^*_\ell)_{\ell \in \N}$  in \eqref{ineq-notinteger} and \eqref{ineq-integer} are analogous to the estimates for corresponding 
probabilities obtained in (3.5) and (3.6) of Proposition 3.2 in \cite{BraLuPra13}.  One may expect that the proof of the tail decay 
property in Theorem \ref{thm-dec} could therefore be deduced from Proposition 
\ref{prop-upbnd} by simply referring to the argument used in \cite{BraLuPra13} to 
deduce their tail decay result (in Theorem 1.1 of \cite{BraLuPra13}) from Proposition 3.2 therein,  which only involves comparison with a  homogeneous linear recursion (see Proposition 3.3 of \cite{BraLuPra13}).  However, we could not quite resolve the argument in \cite{BraLuPra13}, and instead provide a self-contained proof that entails a two-step argument presented above, that involves comparison with the slightly more complicated inhomogeneous recursion analyzed in Lemma \ref{prop-dec}. 
\end{remark}
\subsection{Preliminary Estimates on the Density of the Physical Invariant State}
\label{subsec-PreEst}
In this section we  obtain  preliminary estimates that are used to prove Proposition \ref{prop-upbnd} in Section \ref{subsec-proof dec}. 
\begin{lemma}
\label{prop-rineq}
Suppose Assumptions \ref{ass-G} and \ref{ass-G2} hold. For all $x\in[0,\infty)$, the inequality
\begin{equation}
\label{ineq-r2}
r_2(x)\leq \lambda (s_2^*)^d	+\lambda (1-e^{-\lambda d (s_1^*)^{d-1}x}),
\end{equation}
holds, and for $\ell\geq 3$,
\begin{align}
\label{upbndr_ell}
r_\ell(x)\leq&\lambda (\ssl)^d+\lambda (s_{\ell-1}^*)^d+\lambda\sum_{j=2}^{\ell-2}(s_j^*)^d\prod_{i=j+1}^{\ell-1}(1-e^{-\lambda d(s_i^*)^{d-1}x})+\lambda\prod_{i=1}^{\ell-1}(1-e^{-\lambda d(s_i^*)^{d-1}x}),
\end{align}
where the third term on the right-hand side of \eqref{upbndr_ell} is treated as zero if $\ell=3$.
\end{lemma}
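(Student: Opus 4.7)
The plan is a direct induction on $\ell$, working from the integral representation for $r_\ell$ established in Theorem~\ref{th-reduction}:
\begin{equation*}
r_\ell(x) = \lambda(s_\ell^*)^d e^{-\lambda p_\ell x} + \lambda p_\ell \int_0^x e^{-\lambda p_\ell (x-u)}\, r_{\ell-1}(u)\,du, \qquad \ell \geq 2,
\end{equation*}
with $r_1 \equiv \lambda$, where I write $p_\ell := \mathfrak{P}_d(s_{\ell-1}^*,s_\ell^*)$ and $q_\ell := d(s_\ell^*)^{d-1}$ to lighten notation. Two elementary facts will drive every step. First, since $\ss_\ell \leq \sslm$ (as $\nu^* \in \mathbb{S}$), \eqref{Pdmax} gives $p_\ell \leq q_{\ell-1}$, and hence $1 - e^{-\lambda p_\ell x} \leq 1 - e^{-\lambda q_{\ell-1} x}$ for $x \geq 0$. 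Second, the convolution kernel is a probability density, so $\lambda p_\ell \int_0^x e^{-\lambda p_\ell(x-u)}\,du = 1 - e^{-\lambda p_\ell x} \leq 1$.

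For the base case $\ell = 2$, I would substitute $r_1 \equiv \lambda$ and evaluate the integral explicitly to obtain $r_2(x) = \lambda(\ss_2)^d e^{-\lambda p_2 x} + \lambda(1 - e^{-\lambda p_2 x})$; bounding the first exponential by $1$ and applying $p_2 \leq q_1$ to the second term yields \eqref{ineq-r2}. For the inductive step, writing $\Pi_{a,b}(x) := \prod_{i=a}^b (1 - e^{-\lambda q_i x})$ (with the empty product equal to $1$), the claim to be maintained is
\begin{equation*}
r_{\ell-1}(u) \;\leq\; \lambda(\sslm)^d + \lambda(\ss_{\ell-2})^d + \lambda \sum_{j=2}^{\ell-3} (\ss_j)^d\, \Pi_{j+1,\ell-2}(u) + \lambda\, \Pi_{1,\ell-2}(u).
\end{equation*}
I would plug this into the integral representation for $r_\ell$ and process each summand separately. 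For the two constant summands, the trivial bound $1 - e^{-\lambda p_\ell x} \leq 1$ reproduces $\lambda(\sslm)^d$, while $\lambda(\ss_{\ell-2})^d(1 - e^{-\lambda p_\ell x}) \leq \lambda(\ss_{\ell-2})^d (1 - e^{-\lambda q_{\ell-1} x}) = \lambda(\ss_{\ell-2})^d \Pi_{\ell-1,\ell-1}(x)$, which is precisely the $j = \ell-2$ term of the target sum. For the remaining summands, each $\Pi_{a,\ell-2}(u)$ is non-decreasing in $u$, so it can be pulled out of the integral at its value at $x$, leaving
\begin{equation*}
\lambda p_\ell \int_0^x e^{-\lambda p_\ell (x-u)}\, \Pi_{a,\ell-2}(u)\, du \;\leq\; \Pi_{a,\ell-2}(x)\,(1 - e^{-\lambda p_\ell x}) \;\leq\; \Pi_{a,\ell-1}(x),
\end{equation*}
which produces the $j \in \{2,\ldots,\ell-3\}$ terms and the final full-product term of the bound for $r_\ell$. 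Combining these with $\lambda(\ss_\ell)^d e^{-\lambda p_\ell x} \leq \lambda(\ss_\ell)^d$ closes the induction and yields \eqref{upbndr_ell}.

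The only nontrivial ingredient is recognising that the two monotone bounds $p_\ell \leq q_{\ell-1}$ and $u \mapsto \Pi_{a,b}(u)$ non-decreasing together suffice to avoid doing any partial-fraction or residue calculation on $\lambda p_\ell \int_0^x e^{-\lambda p_\ell(x-u)}(1 - e^{-\lambda q_i u})\,du$; once this is spotted, the proof is bookkeeping and the main care is in handling the boundary cases $\ell = 3$ (where the sum $\sum_{j=2}^{\ell-2}$ is empty so the third term in \eqref{upbndr_ell} vanishes) and correctly lining up the telescoping indices $j+1, \ldots, \ell-1$.
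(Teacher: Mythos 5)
Your proposal is correct and follows essentially the same route as the paper: an induction on $\ell$ starting from $r_2$, using (i) the monotonicity of $u\mapsto\prod_i(1-e^{-\lambda d(s_i^*)^{d-1}u})$ to pull the product out of the convolution integral at its endpoint value, and (ii) the bound $\mathfrak{P}_d(\sslm,\ssl)\leq d(\sslm)^{d-1}$ from \eqref{Pdmax}. The only organizational difference is cosmetic: the paper spells out the $\ell=3$ step explicitly before stating the general inductive step (because the bound on $r_2$ in \eqref{ineq-r2} has a slightly different shape than the $\ell\geq 3$ form \eqref{upbndr_ell}), whereas you fold this into the boundary-case remark at the end.
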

\begin{proof}
Since $(s_\ell^*)_{\ell\in \N}$ is the queue length
distribution associated with a physical invariant state of the hydrodynamic equations, by Theorem \ref{th-reduction}, $r_1=s_1^*=\lambda$, and by convention $s_0^*=1$. Hence, \eqref{ineq-r2} follows from \eqref{eqn-vsl} with $\ell=2$, and the inequality $\mathfrak{P}_d(s_1^*,s_2^*)\leq d(s_1^*)^{d-1}$, which holds by \eqref{Pdmax}. For the proof of \eqref{upbndr_ell} we will make repeated use of the fact that the function $x\mapsto 1-e^{-cx}$ is increasing for any $c>0$. Using \eqref{eqn-vsl} again, but now with $\ell=3$, and using the inequality \eqref{ineq-r2}, we obtain
\begin{align*}
r_3(x)\leq&\lambda (s_3^*)^de^{-\lambda\mathfrak{P}_d(s_2^*,s_3^*)x}+\lambda^2(s_2^*)^d\mathfrak{P}_d(s_2^*,s_3^*)\int_0^xe^{-\lambda\mathfrak{P}_d(s_2^*,s_3^*)(x-u)} du\\
&+\lambda^2\mathfrak{P}_d(s_2^*,s_3^*)\int_0^xe^{-\lambda\mathfrak{P}_d(s_2^*,s_3^*)(x-u)} (1-e^{-\lambda d (s_1^*)^{d-1}u})du\\
\leq&\lambda (s_3^*)^d+\lambda(s_2^*)^d(1-e^{-\lambda\mathfrak{P}_d(s_2^*,s_3^*)x})+\lambda^2 (1-e^{-\lambda d (s_1^*)^{d-1}x})\mathfrak{P}_d(s_2^*,s_3^*)\int_0^xe^{-\lambda\mathfrak{P}_d(s_2^*,s_3^*)(x-u)}du\\
\leq&\lambda (s_3^*)^d+(\lambda(s_2^*)^d+\lambda (1-e^{-\lambda d (s_1^*)^{d-1}x}))(1-e^{-\lambda\mathfrak{P}_d(s_2^*,s_3^*)x}).
\end{align*}
Together with the inequality $\mathfrak{P}_d(s_2^*,s_3^*)\leq d(s_2^*)^{d-1}$ from \eqref{Pdmax}, this implies,
\begin{equation*}
r_3(x)\leq\lambda (s_3^*)^d+\lambda(s_2^*)^d+\lambda (1-e^{-\lambda d (s_1^*)^{d-1}x})(1-e^{-\lambda d(s_2^*)^dx}),
\end{equation*}
which proves \eqref{upbndr_ell} for $\ell=3$.

Now, suppose \eqref{upbndr_ell} holds for some $\ell\geq 3$. 
Then using \eqref{eqn-vsl} with $\ell$ replaced by $\ell+1$ and \eqref{upbndr_ell}, we have 
\begin{align*}
r_{\ell+1}(x)=&\lambda (s_{\ell+1}^*)^de^{-\lambda\mathfrak{P}(\ssl,s_{\ell+1}^*)x }+\lambda\mathfrak{P}_d(\ssl,s_{\ell+1}^*)\int_0^x e^{-{\lambda\mathfrak{P}(\ssl,s_{\ell+1}^*)(x-u) }}r_\ell(u)du\\
\leq & \lambda (s_{\ell + 1}^*)^d + \lambda^2\left[
  (s_{\ell}^*)^d + (s_{\ell - 1}^*)^d\right] \mathfrak{P}_d (s_{\ell}^*, s_{\ell + 1}^*)
  \int_0^x e^{- \lambda \mathfrak{P}_d (s_{\ell}^*, s_{\ell + 1}^*)
  (x - u)} du \\
  &  + \lambda^2 \sum_{j = 2}^{\ell - 2}\left[ (s_j^*)^d \mathfrak{P}_d
  (s_{\ell}^*, s_{\ell + 1}^*)  \int_0^{x} e^{- \lambda
  \mathfrak{P}_d (s_{\ell}^*, s_{\ell + 1}^*) (x - u)} \prod_{i = j
  + 1}^{\ell - 1} (1 - e^{- \lambda d (s_i^*)^{d - 1} u}) du\right]\\
  &  + \lambda^2 \mathfrak{P}_d (s_{\ell}^*, s_{\ell + 1}^*)\int_0^x e^{- \lambda \mathfrak{P}_d (s_{\ell}^*, s_{\ell + 1}^*)
  (x - u)} \prod_{i = 1}^{\ell - 1} (1 - e^{- \lambda d (s_i^*)^{d - 1}
  u}) du\\
    \leq & \lambda (s_{\ell + 1}^*)^d + \lambda
  (s_{\ell}^*)^d (1 - e^{- \lambda \mathfrak{P}_d (s_{\ell}^*,
  s_{\ell + 1}^*) x}) + \lambda (s_{\ell - 1}^*)^d (1 - e^{- \lambda
  \mathfrak{P}_d (s_{\ell}^*, s_{\ell + 1}^*) x})\\
  &  + \lambda^2 \sum_{j = 2}^{\ell - 2} \left[(s_j^*)^d \prod_{i = j + 1}^{\ell - 1} (1 -
  e^{- \lambda d (s_i^*)^{d - 1} x}) \mathfrak{P}_d
  (s_{\ell}^*, s_{\ell + 1}^*)\int_{0}^x e^{- \lambda
  \mathfrak{P}_d (s_{\ell}^*, s_{\ell + 1}^*) (x - u)} du\right]\\
  &  + \lambda^2 
  \prod_{i = 1}^{\ell - 1} (1 - e^{- \lambda d (s_i^*)^{d - 1} x})
  \mathfrak{P}_d (s_{\ell}^*, s_{\ell + 1}^*)\int_{0}^x e^{- \lambda \mathfrak{P}_d (s_{\ell}^*, s_{\ell +
  1}^*) (x - u)} du.
\end{align*}
Therefore,
\begin{align*}
  r_{\ell + 1} (x)   \leq & \lambda (s_{\ell + 1}^*)^d + \lambda (s_{\ell}^*)^d +
  \lambda (s_{\ell - 1}^*)^d (1 - e^{- \lambda \mathfrak{P}_d
  (s_{\ell}^*, s_{\ell + 1}^*) x}) \\
  &+ \lambda \sum_{j = 2}^{\ell - 2} (s_j^*)^d \prod_{i = j +
  1}^{\ell - 1} (1 - e^{- \lambda d (s_i^*)^{d - 1} x}) (1 - e^{- \lambda
  \mathfrak{P}_d (s_{\ell}^*, s_{\ell + 1}^*) x})  + \lambda \prod_{i = 1}^{\ell - 1} (1 - e^{- \lambda d (s_i^*)^{d
  - 1} x}) (1 - e^{- \lambda \mathfrak{P}_d (s_{\ell}^*, s_{\ell +
  1}^*) x}).
\end{align*}
Using the inequality
$\mathfrak{P}_d (s_{\ell}^*, s_{\ell + 1}^*) \leq d
(s_{\ell}^*)^{d - 1}$ from \eqref{Pdmax}, we then obtain
\begin{align*}
     r_{\ell + 1} (x)  \leq & \lambda (s_{\ell + 1}^*)^d + \lambda
     (s_{\ell}^*)^d + \lambda \sum_{j = 2}^{\ell -1}
     (s_j^*)^d \prod_{i = j + 1}^{\ell } (1 - e^{- \lambda d
     (s_i^*)^{d - 1} x}) + \lambda \prod_{i = 1}^{\ell } (1 -
     e^{- \lambda d (s_i^*)^{d - 1} x}),
  \end{align*}
which shows that \eqref{upbndr_ell} holds with $\ell$ replaced by $\ell+1$. 
Thus, by the principle of mathematical induction, \eqref{upbndr_ell} holds for all $\ell\geq 3$.

\end{proof}

We now state a result that is an immediate consequence of Lemma \ref{prop-rineq} and the elementary inequalities $s_\ell^* \leq s_{\ell-1}^*$ and $1 - e^{- x} \leq 1 \wedge x$ for $x\geq 0$.

\begin{corollary}
\label{cor-rdecay}
Suppose Assumptions \ref{ass-G} and \ref{ass-G2} hold. For $x\in[0,\infty)$,
\begin{equation} 
\label{r2-upbnd}
r_2 (x) \leq \lambda (s_1^*)^d + \lambda (1 \wedge \lambda d
   (s_1^*)^{d - 1} x), \end{equation}
   and for $\ell\geq 3$,
   \begin{align} 
   \label{rl-upbnd}
   r_{\ell} (x) \leq &2 \lambda (s_{\ell - 1}^*)^d + \lambda \sum_{j =
   2}^{\ell - 2} (s_j^*)^d \prod_{i = j + 1}^{\ell - 1} (1 \wedge \lambda
   d (s_i^*)^{d - 1} x) + \lambda \prod_{i = 1}^{\ell - 1} (1 \wedge
   \lambda d (s_i^*)^{d - 1} x), \end{align}
   where the second term on the right-hand side of \eqref{rl-upbnd} is treated as zero if $\ell=3$.
 \end{corollary}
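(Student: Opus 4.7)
The plan is to deduce the corollary directly from Lemma \ref{prop-rineq} by applying two elementary pointwise bounds: first, the monotonicity bound $s_\ell^* \leq s_{\ell-1}^*$ for the invariant queue length distribution (which follows from the fact that $\nu^* \in \mathbb{S}$, or alternatively from Lemma \ref{vprop}, which gives $r_{\ell+1} \leq r_\ell$ and hence $s_{\ell+1}^* \leq s_\ell^*$ via \eqref{eqn-ssl}), and second the scalar inequality $1 - e^{-y} \leq 1 \wedge y$ for $y \geq 0$, which is the standard bound obtained by noting $1 - e^{-y} \leq y$ by convexity together with $1 - e^{-y} \leq 1$. No other ingredient is needed and there is really no substantive obstacle.

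For the bound on $r_2$ in \eqref{r2-upbnd}, I would start from inequality \eqref{ineq-r2} in Lemma \ref{prop-rineq}, namely
\[
r_2(x) \leq \lambda (s_2^*)^d + \lambda (1 - e^{-\lambda d (s_1^*)^{d-1} x}).
\]
Applying $(s_2^*)^d \leq (s_1^*)^d$ to the first term and $1 - e^{-\lambda d (s_1^*)^{d-1} x} \leq 1 \wedge \lambda d (s_1^*)^{d-1} x$ to the second term yields \eqref{r2-upbnd} immediately.

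For the bound on $r_\ell$ with $\ell \geq 3$ in \eqref{rl-upbnd}, I would start from \eqref{upbndr_ell}, collapse the first two terms via $(s_\ell^*)^d \leq (s_{\ell-1}^*)^d$ to obtain $2\lambda (s_{\ell-1}^*)^d$, and then apply the inequality $1 - e^{-\lambda d (s_i^*)^{d-1} x} \leq 1 \wedge \lambda d (s_i^*)^{d-1} x$ factorwise inside each of the products $\prod_{i=j+1}^{\ell-1}$ (for $2 \leq j \leq \ell-2$) and $\prod_{i=1}^{\ell-1}$. Since every factor being replaced is nonnegative and bounded above by $1$, the inequality propagates through each product termwise. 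This produces precisely the right-hand side of \eqref{rl-upbnd}, with the convention that the sum $\sum_{j=2}^{\ell-2}$ is empty (hence zero) when $\ell = 3$, which matches the stated convention. This completes the argument; the only genuine content beyond Lemma \ref{prop-rineq} is monotonicity of $s_\ell^*$ in $\ell$ and the scalar inequality $1 - e^{-y} \leq 1 \wedge y$, so there is no conceptual difficulty.
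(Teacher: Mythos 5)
Your argument is correct and is exactly the argument the paper intends: the paper states the corollary as "an immediate consequence of Lemma \ref{prop-rineq} and the elementary inequalities $s_\ell^* \leq s_{\ell-1}^*$ and $1 - e^{-x} \leq 1 \wedge x$," and your proof carries this out — monotonicity collapses $(s_\ell^*)^d + (s_{\ell-1}^*)^d$ to $2(s_{\ell-1}^*)^d$, and the scalar bound is applied factorwise to each nonnegative factor in the products. Nothing is missing.
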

 We now make an observation that will be used repeatedly in the next section.
 Recalling that, $x_0,C_0\in(0,\infty)$, and $\beta$ are constants from  Assumption \ref{ass-service}, now define
\begin{equation}
\label{def-C}
 C : = \frac{\int_0^{x_0} \bar{G} (x) dx}{\int_{x_0}^{\infty} \bar{G}
   (x) dx}, \qquad C_1:= C_0(C+1). 
   \end{equation}
Note that $C \in (0,\infty)$. Fix $f$ to be a  non-decreasing measurable function $f:[0,\infty)\mapsto[0,\infty)$. Then clearly
\begin{eqnarray*}
  \frac{\int_0^{x_0} f(x) \bar{G} (x) dx}{\int_{x_0}^{\infty}
  f (x) \bar{G} (x) dx}  \leq  \frac{f (x_0)
  \int_0^{x_0} \bar{G} (x) dx}{f(x_0) \int_{x_0}^{\infty}
  \bar{G} (x) dx}=C.
\end{eqnarray*}
Using the above inequality and \eqref{Gdecay}, and rescaling, we see that 
\begin{align}
 \label{aft-C1-def}
  \int_0^\infty f(x)\bar{G}(x) dx  =  \int_0^{x_0} f (x) \bar{G} (x) dx +
  \int_{x_0}^{\infty} f (x) \bar{G} (x) dx
  \leq &  (C + 1) \int_{x_0}^{\infty} f (x) \bar{G} (x) dx\nonumber\\
   \leq & (C + 1) C_0 \int_{x_0}^{\infty} f (x) x^{- \beta} dx\nonumber\\
=& \frac{C_1}{x_0^{\beta - 1}} \int_1^{\infty} f
   (x_0 y) y^{- \beta} dy . 
\end{align}
Define $C_\beta:=C_\beta(\lambda,\beta,d)$ as follows:
\begin{equation}
\label{def-C2}
C_\beta:=\left\{\begin{array}{ll}
\max \left( 1, \frac{ C_1 (\lfloor \beta \rfloor+1)}{\min
(\hat{\beta},   1 - \hat{\beta})} (\lambda d)^{\beta - 1}
\right)&\qquad\text{if }\beta\notin\mathbb{N},\\
 \max\left(1,C_1 (\beta+1) (\lambda d)^{\beta-1}\right)&\qquad\text{if }\beta\in\mathbb{N}.
 \end{array}\right.
\end{equation}
Using the convention $s_i^* =1$ for all  $i\leq 0$,
also define
\begin{equation}
\label{def-mi}
m_i:=\frac{1}{\lambda d(s_i^*)^{d-1}x_0},\qquad i\in\mathbb{Z}.
\end{equation}
 Fix $d\geq 2$, $\lambda<1$ and let $x_0>{1}/{\lambda d}$, $\beta>d/(d-1)$ be as in Assumption \ref{ass-service}, and 
let $C_\beta=C_\beta(\lambda,\beta,d)$ be as defined in \eqref{def-C2}.
Then, recalling $\ssl=\int_0^L r_\ell(x)\bar{G}(x) dx$ from \eqref{eqn-ssl}, integrating the inequality \eqref{rl-upbnd} with respect to $\bar{G}(x) dx$ and using the identity $\int_0^\infty\bar{G}(x) dx =1$, it follows that for $\ell\geq 3$
  \begin{align*} 
   s_{\ell}^* \leq &2 \lambda (s_{\ell - 1}^*)^d + \int_0^\infty\lambda \sum_{j=
   2}^{\ell - 2} (s_j^*)^d \prod_{i = j + 1}^{\ell - 1} (1 \wedge \lambda
   d (s_i^*)^{d - 1} x) \bar{G}(x)dx+ \int_0^\infty \lambda \prod_{i = 1}^{\ell - 1} (1 \wedge
   \lambda d (s_i^*)^{d - 1} x) \bar{G}(x) dx .
   \end{align*}
   Applying \eqref{aft-C1-def} with $f(x)=\prod_{i=j+1}^{\ell-1}(1\wedge \lambda d(s_i^*)^{d-1}x)$ for $j\in\{0,2,\cdots\ell-1\}$, and using \eqref{def-mi}, this implies 
   \begin{align} 
   \label{s-bndell}
   s_{\ell}^* \leq &2 \lambda (s_{\ell - 1}^*)^d +\frac{\lambda C_1}{x_0^{\beta - 1}}  \sum_{j =
   2}^{\ell - 2} (s_j^*)^d \psi_{\ell,j}+ \frac{\lambda C_1}{x_0^{\beta - 1}} \psi_{\ell,\ell-1} ,
   \end{align}
   where
   \begin{equation}
   \label{def-psi}
   \psi_{\ell,\ell-1}:=\int_1^\infty\prod_{i=1}^{\ell-1}(1\wedge m_i^{-1}y) dy\quad \text{and}\quad  \psi_{\ell,j}:=\int_1^\infty\prod_{i=j+1}^{\ell-1}(1\wedge m_i^{-1}y) dy, \quad j=2,\cdots,\ell-2.
   \end{equation}
  The proof of Proposition \ref{prop-upbnd} will proceed by bounding the right-hand side of \eqref{s-bndell} by \eqref{ineq-notinteger} or \eqref{ineq-integer}, depending on whether $\beta$ is or is not an integer, using an inductive argument and the estimates in the following lemma. In what follows, recall the convention that $s_i^* =1$ for all $i \leq 0$.
  \begin{lemma}
\label{lem-int}
Fix $\ell\geq 2$ and $j\leq \ell-2$. If $\beta\in\left({d}/{d-1},\infty\right)\setminus\N$, we have
\begin{align}
\label{last-int-nl-noint}
\frac{C_1}{x_0^{\beta - 1}}\psi_{\ell,j}\leq\left\{
  \begin{array}{ll}
    C_\beta \left(\prod_{i=\ell-\lfloor\beta\rfloor+1}^{\ell-1}(s_{i}^*)^{d-1}\right)
  (s_{\ell-\lfloor\beta\rfloor}^*)^{\hat{\beta}(d - 1)} &\quad \text{ if }j=\ell-\lfloor\beta\rfloor-1,\\
   C_\beta \left(\prod_{i=j+1}^{\ell-1}(s_{i}^*)^{d-1}\right) &\quad \text{ if }j\geq\ell-\lfloor\beta\rfloor,
\end{array}  \right.
\end{align}
whereas if $\beta\in\left({d}/{(d-1)},\infty\right)\cap\N$, then there exists $\delta\in (0,1)$ such that
\begin{align}
\label{last-int-nl-int}
\frac{C_1}{x_0^{\beta - 1}}\psi_{\ell,j}\leq\left\{
  \begin{array}{ll}
    C_\beta \left(\prod_{i=\ell-\beta+2}^{\ell-1}(s_{i}^*)^{d-1}\right)
  (s_{\ell-\beta+1}^*)^{(1-\delta)(d - 1)} &\quad \text{ if }j=\ell-\beta,\\
   C_\beta \left(\prod_{i=j+1}^{\ell-1}(s_{i}^*)^{d-1}\right) &\quad \text{ if }j\geq\ell-\beta+1.
\end{array}  \right.
\end{align}
Moreover, we also have
\begin{align}
\label{last-int-1}
\frac{C_1}{x_0^{\beta - 1}}\psi_{\ell,\ell-1}\leq\left\{
  \begin{array}{lr}
    C_\beta \left(\prod_{i=\ell-\lfloor\beta\rfloor+1}^{\ell-1}(s_{i}^*)^{d-1}\right)
  (s_{\ell-\lfloor\beta\rfloor}^*)^{\hat{\beta}(d - 1)} &\quad \text{ if }\beta\in\left(\frac{d}{d-1},\infty\right)\setminus\N,\\
   C_\beta \left(\prod_{i=\ell-\beta+2}^{\ell-1}(s_{i}^*)^{d-1}\right)(s_{\ell-\beta + 1}^*)^{(1-\delta)(d - 1)} &\quad \text{ if }\beta\in\left(\frac{d}{d-1},\infty\right)\cap\N.
\end{array}  \right.
\end{align}
\end{lemma}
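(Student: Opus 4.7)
The plan is to establish each inequality by splitting the defining integral of $\psi_{\ell,j}$ (understood as $\int_1^\infty y^{-\beta}\prod_{i=j+1}^{\ell-1}(1\wedge m_i^{-1}y)\,dy$, with the $y^{-\beta}$ factor inherited from \eqref{aft-C1-def}) at the increasing breakpoints $m_{j+1}\leq\cdots\leq m_{\ell-1}$. On each sub-interval $y\in[m_k,m_{k+1}]$, monotonicity forces $(1\wedge m_i^{-1}y)=1$ for $i\leq k$ and $=m_i^{-1}y$ for $i>k$, so the product collapses to $y^{\ell-1-k}\prod_{i=k+1}^{\ell-1}m_i^{-1}$, reducing each piece to the explicit power integral $\int y^{\ell-1-k-\beta}dy$ with prefactor $(\lambda d x_0)^{\ell-1-k}\prod_{i=k+1}^{\ell-1}(s_i^*)^{d-1}$ (via $m_i^{-1}=\lambda d x_0(s_i^*)^{d-1}$).

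In the easy cases -- $j\geq\ell-\lfloor\beta\rfloor$ (non-integer) and $j\geq\ell-\beta+1$ (integer) -- the number of active factors $\ell-1-j$ is strictly less than $\beta$, so the crude bound $(1\wedge m_i^{-1}y)\leq m_i^{-1}y$ yields $\psi_{\ell,j}\leq(\beta-\ell+j)^{-1}(\lambda d x_0)^{\ell-1-j}\prod_{i=j+1}^{\ell-1}(s_i^*)^{d-1}$, and absorbing the factor of $\lambda d$, the positive power of $x_0$, and the constant $1/(\beta-\ell+j)$ into $C_\beta$ of \eqref{def-C2} gives the claim. The delicate cases are the boundary ones, $j=\ell-\lfloor\beta\rfloor-1$ (non-integer) and $j=\ell-\beta$ (integer), where the crude bound diverges. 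For non-integer $\beta$, on $[1,m_{j+1}]$ all $\lfloor\beta\rfloor+1$ factors remain linear, so $\int_1^{m_{j+1}}y^{-\hat\beta}dy\leq m_{j+1}^{1-\hat\beta}/(1-\hat\beta)$; this sub-integral contributes $(1-\hat\beta)^{-1}m_{j+1}^{-\hat\beta}\prod_{i=j+2}^{\ell-1}m_i^{-1} = (1-\hat\beta)^{-1}(\lambda d x_0)^{\beta-1}(s_{j+1}^*)^{\hat\beta(d-1)}\prod_{i=j+2}^{\ell-1}(s_i^*)^{d-1}$, precisely the target form (since $j+1=\ell-\lfloor\beta\rfloor$). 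On each subsequent sub-interval $[m_k,m_{k+1}]$, the exponent $\ell-1-k-\beta<-1$ makes the lower endpoint dominate and one gets a bound of the same product form with an extra factor $\leq 1$ by monotonicity of $s_\ell^*$; the tail $\int_{m_{\ell-1}}^\infty y^{-\beta}dy$ is treated similarly. Summing the $\lfloor\beta\rfloor+1$ pieces produces the combinatorial factor $(\lfloor\beta\rfloor+1)$ in $C_\beta$ together with $1/\min(\hat\beta,1-\hat\beta)$ from the worst-case denominator. For integer $\beta$ with $j=\ell-\beta$, the first sub-integral is $\int_1^{m_{j+1}}y^{-1}dy=\log m_{j+1}$, not of the target power form; fix any $\delta\in(0,1)$ and bound $y^{-1}\leq y^{-1+\delta}$ on $[1,m_{j+1}]$ to produce $m_{j+1}^\delta/\delta$, which yields the required $(1-\delta)(d-1)$-exponent on $s_{\ell-\beta+1}^*$.

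The bound \eqref{last-int-1} on $\psi_{\ell,\ell-1}$ follows from the boundary estimates: since its defining product extends from $i=1$ and each factor is at most $1$, dropping the factors for $i\leq\ell-\lfloor\beta\rfloor-1$ (resp.\ $i\leq\ell-\beta$) yields $\psi_{\ell,\ell-1}\leq\psi_{\ell,\ell-\lfloor\beta\rfloor-1}$ (resp.\ $\psi_{\ell,\ell-\beta}$), which reduces to the boundary case just handled. The main obstacle is the bookkeeping in the boundary case: verifying that every power of $\lambda d x_0$, every endpoint evaluation, and every $1/(\beta-\ell+k)$-style constant aggregates into the explicit $C_\beta$ of \eqref{def-C2}, including the combinatorial factor $(\lfloor\beta\rfloor+1)$ and $1/\min(\hat\beta,1-\hat\beta)$ from the two sub-intervals flanking the critical exponent $-1$. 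A secondary subtlety is handling the case $m_{j+1}<1$ (when $s_{j+1}^*$ is close to $1$), in which the first sub-interval becomes empty and the decomposition must be started at the smallest $m_k\geq 1$; this only strengthens the resulting bound.
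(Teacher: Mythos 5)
Your proposal follows essentially the same route as the paper: split $\psi_{\ell,j}$ at the breakpoints $m_{j+1}\leq\cdots\leq m_{\ell-1}$, evaluate power integrals on each piece, and aggregate constants into $C_\beta$ of \eqref{def-C2}. Two structural differences are worth noting, and one point needs tightening.

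First, your unified crude bound $(1\wedge m_i^{-1}y)\leq m_i^{-1}y$ in the easy cases $j\geq\ell-\lfloor\beta\rfloor$ (resp.\ $j\geq\ell-\beta+1$) is actually a simplification over the paper, which organizes the argument into Cases 1, 2 and 3 according to the index $\kappa=\min\{i:m_i\geq1\}$ and treats the easy cases differently depending on whether $\kappa\geq\ell$ or $\kappa<\ell$. Your single estimate works uniformly: since $\ell-1-j\leq\lfloor\beta\rfloor-1\leq\beta-1$ and $\lambda dx_0>1$, the factor $(\lambda dx_0)^{\ell-1-j}/x_0^{\beta-1}\leq(\lambda d)^{\beta-1}$, and the $(\beta-\ell+j)^{-1}\leq\hat\beta^{-1}$ fits inside $C_\beta$. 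So this part of your argument is correct and shorter than the paper's.

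Second, and this is the point you should repair: in the integer-$\beta$ boundary case $j=\ell-\beta$ you say ``fix any $\delta\in(0,1)$ and bound $y^{-1}\leq y^{\delta-1}$,'' which produces a factor $m_{j+1}^\delta/\delta$. The $1/\delta$ then propagates into your final constant, and $C_\beta$ in \eqref{def-C2} for integer $\beta$ is exactly $C_1(\beta+1)(\lambda d)^{\beta-1}$ with no slack for an arbitrary $1/\delta$. If you were to pick $\delta$ very small, $1/\delta$ would exceed $C_\beta/(C_1(\lambda d)^{\beta-1})=\beta+1$ and the bound would fail. You need to fix a concrete $\delta$ (e.g.\ $\delta=1/2$) so that $1/\delta$ together with the $\beta-1$ other pieces sums to at most $\beta+1$. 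The paper sidesteps the $1/\delta$ entirely by using the inequality $\log x\leq x^\delta$ (valid for $x\geq1$ and $\delta\geq1/e$) applied directly to $\int_1^{m_{j+1}}y^{-1}dy=\log m_{j+1}$, which yields the cleaner constant $C_1(\lambda d)^{\beta-1}$ per piece. Either route works, but your ``any $\delta$'' phrasing is not correct as stated.

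Two smaller remarks. The number of factors in $\prod_{i=j+1}^{\ell-1}$ when $j=\ell-\lfloor\beta\rfloor-1$ is $\lfloor\beta\rfloor$, not $\lfloor\beta\rfloor+1$; your exponent $-\hat\beta$ is nevertheless correct, so this is only a miscount in the prose. Finally, your remark that the $m_{j+1}<1$ case ``only strengthens the resulting bound'' is true but deserves one more sentence: when the leading factor disappears because $m_{j+1}<1$, the resulting bound carries $(s_{j+1}^*)^{d-1}$ instead of $(s_{j+1}^*)^{\hat\beta(d-1)}$, which is smaller since $s_{j+1}^*\leq1$ and $\hat\beta\leq1$; this is exactly the mechanism the paper exploits in Case 2 via the factor $r_{j,\beta}=m_{\ell-\lfloor\beta\rfloor}^{-\hat\beta}\geq1$.
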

 The proof of Lemma \ref{lem-int} is somewhat involved, and hence relegated to Section  \ref{subsec-est-decay}.
\subsection{Proof of Proposition \ref{prop-upbnd}}
\label{subsec-proof dec}
 First note that the convention $s_i^*=1$ for $i\leq 0$ and the assumption $\sum_{\ell\geq 1}\ssl<\infty$ imply $s_1^*=\lambda$ by Theorem \ref{th-reduction}. Suppose $\beta$ is not an integer.  Using \eqref{eqn-ssl}, \eqref{r2-upbnd} of Corollary \ref{cor-rdecay}  and the fact that $\int_0^\infty \bar{G}(x)dx =1$,  we have 
\begin{equation*}
s_2^*\leq \lambda (\ss_1)^d +\int_0^\infty\lambda(1\wedge \lambda d (s_1^*)^{d-1}x)\bar{G}(x)dx\leq \lambda (\ss_1)^d +\frac{C_1}{x_0^{\beta-1}}\int_1^\infty\lambda(1\wedge m_1^{-1}y)y^{-\beta}dy,
\end{equation*}
where the second inequality uses \eqref{aft-C1-def} with $f(x)=\lambda(1\wedge m_1^{-1}x)$, where $m_1$ is defined in \eqref{def-mi}. Applying \eqref{last-int-1} of Lemma \ref{lem-int}, with $\ell=2$, and using $C_\beta>1$ and $s_i^*\leq 1$ for all $i\in\Z$, we obtain
\begin{equation}
\label{bnd-s2}
s_2^*\leq \lambda (\ss_1)^d +\lambda C_\beta\left(\prod_{i=3-\lfloor\beta\rfloor}^{1}(s_{i}^*)^{d-1}\right) (s_{2-\lfloor\beta\rfloor}^*)^{\hat{\beta}(d - 1)} \leq 2\lambda C_\beta\left(\prod_{i=3-\lfloor\beta\rfloor}^{1}(s_{i}^*)^{d-1}\right) (s_{2-\lfloor\beta\rfloor}^*)^{\hat{\beta}(d - 1)} ,
\end{equation}
  which shows that \eqref{ineq-notinteger} holds for $\ell=2$.
  
  Similarly, using \eqref{eqn-ssl} and \eqref{rl-upbnd} of Corollary \ref{cor-rdecay}, with $\ell=3$, and \eqref{aft-C1-def}, with $f(x)=\lambda \prod_{i=1}^2 (1\wedge m_i^{-1}x)$, we have
  \begin{equation}
\label{s3}
s_3^*\leq \lambda (\ss_2)^d +\frac{C_1}{x_0^{\beta-1}}\int_1^\infty\lambda\prod_{i=1}^2(1\wedge m_i^{-1}y)y^{-\beta}dy.
  \end{equation} 
We now claim that for any $k\geq 2$ and $L_k<\infty$,
\begin{equation}
\label{sd-dec}
s_{k}^* \leq L_{k} \left(\prod_{i=k-\lfloor\beta\rfloor+1}^{k-1}(s_{i}^*)^{d-1}\right) (s_{k-\lfloor\beta\rfloor}^*)^{\hat{\beta}(d - 1)}\implies
 (s_{k}^*)^d \leq L_{k} \left(\prod_{i=k-\lfloor\beta\rfloor+2}^{k}(s_{i}^*)^{d-1}\right) (s_{k-\lfloor\beta\rfloor+1 }^*)^{\hat{\beta}(d - 1)}. \end{equation}  
  Indeed, this follows from the fact that
\begin{align*}
  (s_{k}^*)^d =(s_k^*)^{d-1}s_k^*
   \leq & (s_{k}^*)^{d - 1} L_{k}
  \left(\prod_{i=k-\lfloor\beta\rfloor+1}^{k-1}(s_{i}^*)^{d-1}\right) (s_{k-\lfloor\beta\rfloor}^*)^{\hat{\beta}(d - 1)}\\
  = & L_{k} \left(\prod_{i=k-\lfloor\beta\rfloor+2}^{k}(s_{i}^*)^{d-1}\right) (s_{k-\lfloor\beta\rfloor+1}^*)^{\hat{\beta}(d - 1)},
\end{align*}
and the fact that $s_{k-\lfloor\beta\rfloor}^*\leq 1$ and $s_{k-\lfloor\beta\rfloor+1}^*\leq (s_{k-\lfloor\beta\rfloor+1}^*)^{\hat{\beta}}$. 
Together, \eqref{bnd-s2}, \eqref{s3} and \eqref{sd-dec}, with $k=2$ and $L_2 = 2 C_\beta$, yield 
  \begin{equation}
  \label{bnd-s3}
  s_3^*\leq 5\lambda C_\beta\left(\prod_{i=4-\lfloor\beta\rfloor}^{2}(s_{i}^*)^{d-1}\right) (s_{3-\lfloor\beta\rfloor}^*)^{\hat{\beta}(d - 1)}.
  \end{equation} 
  
We now use induction to obtain estimates for all $\ell\geq 2$.  Define $K_2:=2$, $K_3:=5$, and 
\begin{equation}
\label{def-Kell}
K_\ell:=2K_{\ell-1}+\sum_{j =2}^{\ell - 2}K_j+1\qquad \forall \ell\geq 4.
\end{equation}
 Moreover, suppose for some $\ell> 3$,  
  \begin{equation}
  \label{inductionstep}
  s_j^*\leq \lambda K_j(C_\beta)^{j-2}\left(\prod_{i=j-\lfloor\beta\rfloor+1}^{j-1}(s_{i}^*)^{d-1}\right) (s_{j-\lfloor\beta\rfloor}^*)^{\hat{\beta}(d - 1)},\qquad j\in\{3,\cdots\ell-1\}.
  \end{equation}
  To upper bound the first term on the right-hand side of \eqref{s-bndell}, we combine \eqref{inductionstep}, with $j=\ell-1$, and \eqref{sd-dec}, with $k=\ell-1$ and $L_{\ell-1}=K_{\ell-1}C_\beta^{\ell-3}$, to conclude that
  
 \begin{equation}
 \label{induct-term1}
  2\lambda (\sslm)^d\leq 2\lambda K_{\ell-1}C_\beta^{\ell-3} \left(\prod_{i=\ell-\lfloor\beta\rfloor+1}^{\ell-1}(s_{i}^*)^{d-1}\right)(s_{\ell-\lfloor\beta\rfloor}^*)^{\hat{\beta}(d - 1)} .
  \end{equation} 
  Next, to estimate the last term on the right-hand side of \eqref{s-bndell}, combine \eqref{def-psi} with \eqref{last-int-1} of Lemma \ref{lem-int}, to obtain
\begin{equation}
\label{induct-term3}
\frac{C_1}{x_0^{\beta-1}}\psi_{\ell,\ell-1}\leq  C_\beta \left(\prod_{i=\ell-\lfloor\beta\rfloor+1}^{\ell-1}(s_{i}^*)^{d-1}\right)(s_{\ell-\lfloor\beta\rfloor}^*)^{\hat{\beta}(d - 1)}.
\end{equation}  
We now identify respective upper bounds for each of the summand in the middle term on the right-hand side of \eqref{s-bndell}. First, note that when $2\leq j\leq \ell-\lfloor\beta\rfloor - 1$, \eqref{last-int-nl-noint} and the inequalities $s_i^*\leq 1$ for all $i$, and $C_\beta>1$ imply
  \begin{equation}
  \label{small-j-bnd}
   \frac{C_1}{x_0^{\beta - 1}} (s_j^*)^d\psi_{\ell,j}\leq  \frac{C_1}{x_0^{\beta - 1}} (s_j^*)^d \psi_{\ell,\ell-\lfloor\beta\rfloor-1}\leq C_\beta^{\ell-2}\left(\prod_{i=\ell-\lfloor\beta\rfloor+1}^{\ell-1}(s_{i}^*)^{d-1}\right) (s_{\ell-\lfloor\beta\rfloor}^*)^{\hat{\beta}(d - 1)}.
  \end{equation}
  On the other hand, if $ \ell-\lfloor\beta\rfloor \leq j\leq \ell-2$, using the fact that $j+1<\ell$ and $s_i^*\leq 1$ for all $i$, as well as \eqref{inductionstep} and \eqref{sd-dec}, with $k=j$ and $L_k=K_jC_\beta^{j-2}$, one obtains
  \begin{equation}
  \label{large-j-bnd}
  \frac{C_1}{x_0^{\beta-1}}\psi_{\ell,j}\leq K_jC_\beta^{j-1} \left(\prod_{i=\ell-\lfloor\beta\rfloor+1}^{\ell-1}(s_{i}^*)^{d-1}\right) (s_{\ell-\lfloor\beta\rfloor}^*)^{\hat{\beta}(d - 1)}.
  \end{equation}
  We now observe that $K_\ell\leq 3^{\ell-1}$. Since $K_2=2$, $K_3=5$ and for $\ell>3$, by definition 
\begin{align*}
K_{\ell-1}=2K_{\ell-2}+\sum_{j=2}^{\ell-3}K_j+1&=K_{\ell-2}+\sum_{j=2}^{\ell-2}K_j+1\\
&=K_{\ell-2}+K_\ell-2K_{\ell-1}\\
&\geq K_\ell-2K_{\ell-1},
\end{align*}
which implies $K_\ell\leq 3^{\ell-1}$ 
for all $\ell\geq 2$. Combining this with \eqref{induct-term1}-\eqref{large-j-bnd}, the definition \eqref{def-Kell} and the inequality $C_\beta>1$, we conclude that
  
  \begin{align*}
\ssl&\leq  \left(2\lambda K_{\ell-1}C_\beta^{\ell-3}+\lambda C_\beta^{\ell-2}\sum_{j =2}^{\ell - 2}K_j +\lambda C_\beta\right)
\left(\prod_{i=\ell-\lfloor\beta\rfloor+1}^{\ell-1}(s_{i}^*)^{d-1}\right) (s_{\ell-\lfloor\beta\rfloor}^*)^{\hat{\beta}(d - 1)}\\
   &\leq \lambda K_\ell C_\beta^{\ell-2}\left(\prod_{i=\ell-\lfloor\beta\rfloor+1}^{\ell-1}(s_{i}^*)^{d-1}\right) (s_{\ell-\lfloor\beta\rfloor}^*)^{\hat{\beta}(d - 1)}.
\end{align*}
This shows that \eqref{inductionstep} also holds when $\ell-1$ is replaced by $\ell$. By the principle of induction, \eqref{inductionstep} holds for all $\ell\geq2$. Since $K_\ell\leq 3^{\ell-1}$ and $C_\beta>1$, this proves \eqref{ineq-notinteger} of Proposition \ref{prop-upbnd}.

If $\beta$ is an integer, the proof for 
\eqref{ineq-integer} uses the same argument as in the non-integral case except that $\hat{\beta}$  is replaced with $1-\delta$, and $i-\lfloor\beta\rfloor$ with $i-\beta+1$ for all $i$, and the upper bounds in \eqref{last-int-nl-int} of Lemma \ref{lem-int} are used in place of the bounds in \eqref{last-int-nl-noint}. Thus, we omit the details.
\subsection{Proof of Key Estimates}

\label{subsec-est-decay}

 To complete the proof of Proposition \ref{prop-upbnd}, in this section we present the proof of Lemma \ref{lem-int}. We start with a preliminary result in Lemma \ref{lem-integrals} that will be used in the proof. 
Recall the definition of $m_i$  from \eqref{def-mi} and note that $m_0<1$ and the sequence $\{m_i\}_{i\in\Z}$ is non-decreasing. Define 
\begin{equation}
\label{def-kappa}
\kappa:=\min\{i\in\N:m_i\geq 1\},
\end{equation}
and note that $\kappa<\infty$ since $s_i^*\rightarrow 0$ as $i\rightarrow\infty$; see  \eqref{eq-tailcond}. As always, $\beta\in(d/(d-1),\infty)$.
\begin{lemma}
\label{lem-integrals}
Fix $\ell\in\N$. Let $j'\leq\ell-2$. Then for any $z>0$,
\begin{equation}
\label{int1toinfty}
\int_z^\infty  \prod_{i=j'+1}^{\ell-1} (1 \wedge
  m_i^{- 1} y)y^{-\beta}dy\leq\frac{ z^{-(\beta-1)}}{(\beta-1)}.
\end{equation}
Moreover, if $j'\leq n-2$ and $\ell-\lfloor \beta\rfloor +2\leq n\leq \ell-1$, then
\begin{align}
\label{intmn-1to mn}
\int_{m_{n-1}}^{m_n}  \prod_{i=j'+1}^{\ell-1} (1 \wedge
  m_i^{- 1} y)y^{-\beta}dy
  \leq
  \frac{ (\lambda d x_0)^{\beta - 1}}{(\beta - \ell + n - 1)} 
  \left(\prod_{i=n}^{\ell-1}(s_{i}^*)^{d-1}\right)     (s_{n -
  1}^*)^{(\beta - \ell + n- 1)(d - 1)}.
  \end{align}
  If $\beta\in\left({d}/{(d-1)},\infty\right)\setminus \N$ and $\ell-\lfloor \beta\rfloor <j'+1\leq\kappa\leq \ell-1$, then
  \begin{equation}
\label{int1tomk}
\int_{1}^{m_{\kappa}} \ \prod_{i=j'+1}^{\ell-1} (1 \wedge
  m_i^{- 1} y)y^{-\beta}dy\leq\frac{(\lambda d x_0)^{\beta -\hat{\beta}- 1}}{\hat{\beta}} 
  \left(\prod_{i=j'+1}^{\ell-1}(s_{i}^*)^{d-1}\right).
  \end{equation}
\end{lemma}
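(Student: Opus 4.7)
The plan is to exploit the elementary identity $1\wedge m_i^{-1}y=1$ when $y\geq m_i$ and $1\wedge m_i^{-1}y=m_i^{-1}y$ when $y\leq m_i$, together with the monotonicity of $\{m_i\}_{i\in\Z}$ (a consequence of $i\mapsto s_i^*$ being non-increasing). Throughout I will use that $\lambda d x_0>1$, which is immediate from Assumption \ref{ass-service}, and the identity $m_i^{-1}=\lambda d x_0(s_i^*)^{d-1}$ from \eqref{def-mi}. Each of the three bounds will follow by applying the appropriate pointwise estimate on the integrand and then computing an elementary integral. The first estimate \eqref{int1toinfty} is immediate: every factor $1\wedge m_i^{-1}y\leq 1$, so the product is at most $1$, and $\int_z^\infty y^{-\beta}dy=z^{-(\beta-1)}/(\beta-1)$ yields the claim.

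For \eqref{intmn-1to mn}, the first step will be to use the monotonicity of $\{m_i\}$ to show that on the interval $[m_{n-1},m_n]$,
$$\prod_{i=j'+1}^{\ell-1}(1\wedge m_i^{-1}y)=y^{\ell-n}\prod_{i=n}^{\ell-1}m_i^{-1},$$
since the factors with $i\leq n-1$ equal $1$ (the index range being nonempty thanks to $j'\leq n-2$) and those with $i\geq n$ equal $m_i^{-1}y$. The hypothesis $n\geq\ell-\lfloor\beta\rfloor+2$ forces $\ell-n-\beta<-1$, so an elementary integration yields $\int_{m_{n-1}}^{m_n}y^{\ell-n-\beta}dy\leq m_{n-1}^{-(\beta+n-\ell-1)}/(\beta+n-\ell-1)$. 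Substituting $m_i^{-1}=\lambda d x_0(s_i^*)^{d-1}$ then produces the prefactor $(\lambda d x_0)^{\beta-1}$ and the stated product of powers of $s_i^*$.

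For \eqref{int1tomk} the exponent structure on $y$ is different, and the cleanest approach is to use the crude bound $1\wedge m_i^{-1}y\leq m_i^{-1}y$ uniformly in $i$ and $y>0$, giving
$$\prod_{i=j'+1}^{\ell-1}(1\wedge m_i^{-1}y)\leq y^{\ell-1-j'}\prod_{i=j'+1}^{\ell-1}m_i^{-1}.$$
Then $\int_1^{m_\kappa}y^{\ell-1-j'-\beta}dy\leq 1/(\beta+j'-\ell)$, where the non-integrality of $\beta$ together with $\ell-\lfloor\beta\rfloor<j'+1$ ensure that $\beta+j'-\ell\geq\hat\beta>0$. Since $\lambda d x_0>1$, one may then replace $(\lambda d x_0)^{\ell-1-j'}$ by the larger quantity $(\lambda d x_0)^{\lfloor\beta\rfloor-1}=(\lambda d x_0)^{\beta-\hat\beta-1}$, which yields the claimed bound. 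The main obstacle here is purely bookkeeping — matching the exponents of $\lambda d x_0$ and $s_i^*$ with the claimed forms and aligning the index ranges — and the essential role of $\beta\notin\N$ enters only in the third bound, through the strict positivity of $\hat\beta$.
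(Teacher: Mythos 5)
Your proof is correct and takes essentially the same approach as the paper: pointwise evaluate or bound the piecewise product on each relevant range, then integrate and track exponents. The only cosmetic difference is in \eqref{int1tomk}, where you apply $1\wedge m_i^{-1}y\leq m_i^{-1}y$ uniformly rather than the paper's equivalent step of evaluating the product exactly on $[1,m_\kappa)$ and then multiplying by the factors $m_i^{-1}y\geq1$ for $i\leq\kappa-1$; both yield the same integrand bound and final estimate.
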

\begin{proof}
Fix $j'\leq\ell-2$. The relation \eqref{int1toinfty} follows immediately since $1\wedge x\leq 1$, and $\beta>1$ imply
\begin{align*}
\int_z^\infty \prod_{i =j'+1}^{\ell - 1} (1 \wedge
   m_i^{- 1} y)y^{-\beta}dy
   \leq&\int_z^\infty y^{-\beta}dy
   =\frac{z^{-(\beta-1)}}{(\beta-1)}.
   \end{align*}
Next, fix $j',\ell,n \in\N$ with $j'\leq n-2$ and $\ell-\lfloor \beta\rfloor +2\leq n\leq \ell-1$. 
 Then, for  $y\in [m_{n-1},m_n)$, the fact that $m_i$ is increasing in $i$ implies $m_i^{-1}y\geq 1$ for all $i\leq n-1$ and $m_i^{-1}y\leq 1$ for all $i\geq n$. Hence,   the definition of $m_{n-1}$ in \eqref{def-mi} and the relation $\ell-\beta-n+1<0$ yield \eqref{intmn-1to mn} as follows:
 \begin{align*}
  \int_{m_{n- 1}}^{m_n}  \prod_{i= j'+1}^{\ell - 1} (1 \wedge
   m_i^{- 1} y)y^{- \beta} dy
    &=\int_{m_{n - 1}}^{m_n}  (\lambda d x_0)^{\ell
  - n} \left(\prod_{i=n}^{\ell-1}(s_{i}^*)^{d-1}\right) y^{- \beta + \ell - n}
  dy\nonumber\\
  & \leq  \frac{ (\lambda d x_0)^{\ell - n}}{(\beta -
  \ell + n - 1) } \left(\prod_{i=n}^{\ell-1}(s_{i}^*)^{d-1}\right) (\lambda d (s_{n- 1}^*)^{d - 1} x_0)^{\beta - \ell + n - 1}\nonumber\\
  &=  \frac{(\lambda d x_0)^{\beta - 1}}{(\beta - \ell + n - 1)} 
  \left(\prod_{i=n}^{\ell-1}(s_{i}^*)^{d-1}\right)   (s_{n -
  1}^*)^{(\beta - \ell + n- 1)(d - 1)}.
\end{align*}

Next, let $\ell-\lfloor \beta\rfloor<j'+1\leq\kappa\leq \ell-1$.  Then,  for  $y\in [1,m_\kappa)$, the fact that $m_i$ is increasing in $i$ implies $m_i^{-1}y\geq 1$ for all $i\leq \kappa-1$ and $m_i^{-1}y\leq 1$ for all $i\geq \kappa$.  Hence, combining this with the definition of $m_i$ in \eqref{def-mi}, and the relation $\beta-\lfloor\beta\rfloor+1=\hat{\beta}+1>0$, we obtain \eqref{int1tomk} as follows: 
\begin{align*}
 \int_{1}^{m_\kappa}  \prod_{i= j'+1}^{\ell - 1} (1 \wedge
   m_i^{- 1} y)y^{- \beta} dy
  & = (\lambda d x_0)^{\ell - \kappa} \left(\prod_{i=\kappa}^{\ell-1}(s_{i}^*)^{d-1}\right)  \int_1^{m_\kappa} y^{- \beta + \ell - \kappa}
  dy\nonumber\\
   & \leq  (\lambda d x_0)^{\ell - \kappa} \left(\prod_{i=\kappa}^{\ell-1}(s_{i}^*)^{d-1}\right) \int_1^{m_\kappa} \left(\prod_{i=j'+1}^{\kappa-1}m_i^{-1}y\right)y^{- \beta + \ell - \kappa}
  dy\nonumber\\
  & = (\lambda d x_0)^{\ell -j'-1} \left(\prod_{i=j'+1}^{\ell-1}(s_{i}^*)^{d-1}\right)  \int_1^{m_\kappa} y^{-\beta+\ell-j' -1}
  dy\nonumber\\
 &\leq  (\lambda
  dx_0)^{\lfloor \beta\rfloor-1} \left(\prod_{i=j'+1}^{\ell-1}(s_{i}^*)^{d-1}\right)
  \int_1^{m_\kappa} y^{  -\beta+ \lfloor\beta\rfloor-1}
  dy\nonumber\\
  & \leq  \frac{(\lambda dx_0)^{\beta-\hat{\beta} - 1}}{\hat{\beta} }
   \left(\prod_{i=j'+1}^{\ell-1}(s_{i}^*)^{d-1}\right) ,
\end{align*}
where the fourth step uses the inequalities $\ell-\lfloor\beta\rfloor< j'+1$ and   $\lambda dx_0\geq 1$.
\end{proof}

Recall the definition of $C_1$ from \eqref{def-C}, $C_\beta$ from \eqref{def-C2} and $\psi$ from \eqref{def-psi}.

\begin{proof}[Proof of Lemma \ref{lem-int}]
Fix $\ell\in\N$, $\ell\geq 2$ and $\beta\in (d/(d-1),\infty)$. 
 We first show that \eqref{last-int-nl-noint} and \eqref{last-int-nl-int} imply \eqref{last-int-1}. First, suppose $\beta\notin\N$. Recalling the definition of $\psi_{\ell,j}$ from \eqref{def-psi}, we now claim (and justify below) that
 \begin{align}
\label{psi-equiv}
\frac{C_1}{x_0^{\beta - 1}}\int_1^\infty \prod_{i = 1}^{\ell - 1} (1 \wedge
  m_i^{- 1} y)y^{-\beta}dy\leq \frac{C_1}{x_0^{\beta - 1}}\int_1^\infty& \prod_{i = \ell-\lfloor\beta\rfloor}^{\ell - 1} (1 \wedge
  m_i^{- 1} y)y^{-\beta}dy= \frac{C_1}{x_0^{\beta - 1}}\psi_{\ell,\ell-\lfloor\beta\rfloor-1}.
\end{align}
  Indeed, note that if $i\leq 0$, then  $1\wedge m_i^{-1}=1\wedge \lambda dx_0(s_i^*)^{d-1}=1$ since $\lambda d x_0>1$, and $s_i^* =1$ for $i\leq 0$. This immediately implies \eqref{psi-equiv} when  $\ell-\lfloor\beta\rfloor\leq 1$.
On the other hand, if $\ell-\lfloor\beta\rfloor>1$, then the trivial inequality $1\wedge x\leq 1$ and \eqref{def-psi} imply \eqref{psi-equiv}.
 Noting that $\lambda d x_0\geq 1$ implies $C_1/x_0^{\beta-1}$ is bounded above by the constant $C_\beta$ defined in \eqref{def-C2}, the last two displays imply \eqref{last-int-1}.
 If $\beta\in\N$, \eqref{last-int-nl-int}  can be shown to imply \eqref{last-int-1}  using the same argument, but with $\ell-\lfloor\beta\rfloor$ replaced by $\ell-\beta+1$.

 We now turn to the proofs of \eqref{last-int-nl-noint} and \eqref{last-int-nl-int}.
Recall the definition of  $\kappa$ in \eqref{def-kappa}.

\noindent\underline{\textbf{Case 1}}: Suppose $\kappa\geq\ell$ and $\ell-\lfloor\beta\rfloor-1\leq j\leq \ell-2$.   Then, using  the definition of $\psi_{\ell,j}$ in \eqref{def-psi} and (since $j\leq \ell-2$) applying \eqref{int1toinfty} with $z=1$ and $j'=j$, as well as \eqref{def-C2}, this implies
\begin{align}
\frac{C_1}{x_0^{\beta-1}}\psi_{\ell,j}=
\frac{C_1}{x_0^{\beta-1}}\int_1^\infty \prod_{i = j+1}^{\ell - 1} (1 \wedge
   m_i^{- 1} y)y^{-\beta}dy
   \leq &\frac{C_1}{(\beta-1)x_0^{\beta-1}}\leq \frac{C_\beta}{(\lambda d x_0)^{\beta-1}}\nonumber.
   \end{align}
  Combining this with the observation that $m_i^{-1}=\lambda d x_0 (s_i^*)^{d-1}\geq 1$ for all $ i\leq \ell-1$ due to \eqref{def-kappa} and the case assumption, we have, for any $r_{j,\beta}\geq 1$,
   \begin{align}
   \label{case1i}
   \frac{C_1}{x_0^{\beta-1}}\psi_{\ell,j}\leq&\frac{ C_\beta}{(\lambda dx_0)^{\beta-1}}
  \left(\prod_{i=j+1}^{\ell-1}m_i^{-1}\right)r_{j,\beta}= C_\beta{(\lambda d x_0)^{-(j-\ell+\beta)}}
  \left(\prod_{i=j+1}^{\ell-1}(s_{i}^*)^{d-1}\right)r_{j,\beta}.
  \end{align}  
  Since $\lambda d x_0>1$, choosing $r_{j,\beta}=1$, we obtain
\begin{equation*}
\frac{C_1}{x_0^{\beta-1}}\psi_{\ell,j}\leq C_\beta
  \left(\prod_{i=j+1}^{\ell-1}(s_{i}^*)^{d-1}\right)\qquad\text{if } j\geq \ell-\beta.
\end{equation*}  
This immediately implies \eqref{last-int-nl-noint} when $\beta\notin\N$ and $j\geq \ell-\lfloor\beta\rfloor$, and \eqref{last-int-nl-int} when $\beta\in\N$ and $j\geq\ell-\beta+1$ and also, noting that $s^*_{\ell-\beta+1}\leq 1$ implies  $s^*_{\ell-\beta+1}\leq (s^*_{\ell-\beta+1})^{(1-\delta)}$ for any $\delta>0$, when $j=\ell-\beta$. When $\beta\notin\N$, since $j\geq \ell-\beta$ implies $j\geq \ell-\lfloor\beta\rfloor$, \eqref{case1i} also proves \eqref{last-int-nl-noint} in that case. The remaining case when $\beta\notin\N$ and $j=\ell-\lfloor\beta\rfloor-1$ can be deduced similarly from \eqref{case1i} by setting $r_{j,\beta}=m_{\ell-\lfloor\beta\rfloor}^{-(1-\hat{\beta})}=(\lambda d x_0)^{-(1-\hat{\beta})}(s^*_{\ell-\lfloor\beta\rfloor})^{-(1-\hat{\beta})(d-1)}$ therein. 
  
  \noindent\underline{\textbf{Case 2}}: Suppose $\kappa<\ell$ and $\ell-\lfloor\beta\rfloor-1\leq j<\kappa-1$. We 
now look at the partition $\pi_2=\{[1,m_\kappa),[m_\kappa,m_{\kappa+1}),$ $\cdots,$ $[m_{\ell-1},\infty)\}$ of the interval $[1,\infty)$. Note that 
\begin{equation}
\label{case2-intervals}
 \mbox{number of intervals in } \pi_2 =\ell-\kappa+1\leq \lfloor\beta\rfloor -1+1 <\lfloor\beta\rfloor+1.
\end{equation}
Since $j\leq \ell-2$, using  \eqref{int1toinfty} with $z=m_{\ell-1}=(\lambda d(\sslm)^{d-1}x_0)^{-1}$ and $j'=j$, we have
\begin{align}
 \frac{C_1}{x_0^{\beta - 1}} \int_{m_{\ell - 1}}^{\infty}  \prod_{i = j+1}^{\ell - 1} (1 \wedge
   m_i^{- 1} y)y^{- \beta} dy
\leq& \frac{ C_1}{(\beta - 1)} (\lambda d)^{\beta - 1}
  (s_{\ell-1}^*)^{(d - 1) (\beta - 1)}.\label{eqn-lastpart}
\end{align}
 Next, since $\ell-\lfloor\beta\rfloor\leq j +1 < \kappa$ implies  $\ell-\lfloor\beta\rfloor +1 \leq \kappa$ , we have for $n \in\{ \kappa+1, \kappa+2, \cdots, \ell
- 1\}$, $n\geq \kappa +1\geq \ell-\lfloor\beta\rfloor +2$ and $n\geq \kappa +1> j+2$.  Since  $\ell-\lfloor\beta\rfloor +2\leq n\leq \ell-1$ and $j\leq n-2$, we use \eqref{intmn-1to mn} with $j'=j$ to obtain
\begin{align}
 \frac{C_1}{x_0^{\beta - 1}} \int_{m_{n- 1}}^{m_n}  \prod_{i = j+1}^{\ell - 1} (1 \wedge
   m_i^{- 1} y)y^{- \beta} dy   
    \leq  \frac{ C_1}{(\beta - \ell + n - 1)} (\lambda d)^{\beta - 1} \sigma_n^{d-1},\label{eqn-penultpart}
\end{align}
where
\begin{equation}
\label{def-sigma}
\sigma_n :=   \left(\prod_{i=n}^{\ell-1}s_{i}^*\right)   (s_{n -
  1}^*)^{(\beta - \ell + n- 1)}.
\end{equation}
Now, for any $n	\in\{\kappa +1, \cdots, \ell - 1\}$ 
the case assumption, which in particular implies $j <  n-2$,  the inequality $s_{n-1}^* \leq 1$, and the fact that 
$i \mapsto s_i^*$ is increasing imply that  $(s_{\ell-1}^*)^{\beta} \leq \sigma_n$ and 
$\sigma_n \leq \sigma_n (s_{n-1}^*)^{\ell - j - \beta} = \left( \prod_{i=n}^{\ell - 1} s_i^* \right) (s_{n-1}^*)^{n-1- j}    
\leq \prod_{j+1}^{n-1} s_i^*$, 
and hence, that 
\begin{align} 
\label{max-case2a-large-j}
&\max_{n\in\{\kappa+1,\cdots,\ell-1\}}\Big((s_{\ell-1}^*)^{\beta-1},\sigma_n,
  \prod_{i=j+1}^{\ell-1}s_{i}^* \Big)=\prod_{i=j+1}^{\ell-1}s_{i}^*,
  \end{align}
 \noindent\underline{\textbf{Case 2A}}: Suppose $\beta\notin\N$. Restricting $j$ so that $\ell-\lfloor\beta\rfloor-1< j<\kappa-1$, and using \eqref{int1tomk} with $j'=j$, we have for any $r_{j,\beta}\geq 1$, 
 \begin{align}
\frac{C_1}{x_0^{\beta - 1}} \int_{1}^{m_\kappa}  \prod_{i = j+1}^{\ell - 1} (1 \wedge
   m_i^{- 1} y)y^{- \beta} dy
  & \leq  \frac{ C_1}{\hat{\beta}x_0^{\beta-1} }
  (\lambda dx_0)^{\beta-\hat{\beta} - 1} \left(\prod_{i=j+1}^{\ell-1}(s_{i}^*)^{d-1}\right)r_{j,\beta} .\label{case2i3}
\end{align}
Since $\hat{\beta}>0$ and $\lambda d x_0\geq 1$, choosing $r_{j,\beta}=1$, we obtain
\begin{equation}
\label{case2a-large-j}
\frac{C_1}{x_0^{\beta - 1}} \int_{1}^{m_\kappa}  \prod_{i = j+1}^{\ell - 1} (1 \wedge
   m_i^{- 1} y)y^{- \beta} dy
   \leq  \frac{ C_1}{\hat{\beta} }
  (\lambda d)^{\beta - 1} \left(\prod_{i=j+1}^{\ell-1}(s_{i}^*)^{d-1}\right).
\end{equation}
  Combining \eqref{case2-intervals} -- \eqref{def-sigma}, \eqref{case2a-large-j} and \eqref{max-case2a-large-j} with the fact that
   \begin{equation}
   \label{min-beta-noint}
   \min_{n\in\{\kappa+1,\cdots,\ell-1\}}({\beta-1},{\beta-\ell+n-1},{\hat{\beta}})={\min(\hat{\beta},1-\hat{\beta})} ,
\end{equation}   
    and  the definitions of $C_\beta$ and $\psi_{\ell,j}$ in \eqref{def-C2} and \eqref{def-psi}, respectively, we obtain 
 \eqref{last-int-nl-noint} whenever $j\geq \ell-\lfloor\beta\rfloor$.
 
 Setting $j=\ell-\lfloor\beta\rfloor$ and $r_{j,\beta}=m_{\ell-\lfloor\beta\rfloor}^{-\hat{\beta}}=(\lambda d x_0)^{\hat{\beta}}(s^*_{\ell-\lfloor\beta\rfloor})^{\hat{\beta}(d-1)}$ in \eqref{case2i3}, we obtain
\begin{align}
\frac{C_1}{x_0^{\beta - 1}} \int_{1}^{m_\kappa}  \prod_{i = \ell-\lfloor\beta\rfloor}^{\ell - 1} (1 \wedge
   m_i^{- 1} y)y^{- \beta} dy&\leq \frac{C_1}{x_0^{\beta - 1}} \int_{1}^{m_\kappa}  \prod_{i = \ell-\lfloor\beta\rfloor+1}^{\ell - 1} (1 \wedge
   m_i^{- 1} y)y^{- \beta} dy\nonumber\\
  \label{bnd-al-less0}
    & \leq  \frac{ C_1}{\hat{\beta} }
  (\lambda d)^{\beta - 1} \left(\prod_{i=\ell-\lfloor\beta\rfloor+1}^{\ell-1}(s_{i}^*)^{d-1}\right)(s_{\ell-\lfloor\beta\rfloor}^*)^{\hat{\beta}(d - 1)} .
\end{align}
  Now, noting that $\ell-\lfloor\beta\rfloor -1< n-2$ and $i\mapsto s_i^*$ is non-increasing, we conclude that  
\begin{align} 
\label{max-case2a-small-j}
&\max_{n\in\{\kappa+1,\cdots,\ell-1\}}\left((s_{\ell-1}^*)^{\beta-1},\sigma_n,
  \left(\prod_{i=\ell-\lfloor\beta\rfloor+1}^{\ell-1}s_{i}^*\right)(s_{\ell-\lfloor\beta\rfloor}^*)^{\hat{\beta}}  \right)=\left(\prod_{i=\ell-\lfloor\beta\rfloor+1}^{\ell-1}s_{i}^*\right)
  (s_{\ell-\lfloor\beta\rfloor}^*)^{\hat{\beta}} ,
  \end{align}
  Combining  \eqref{case2-intervals} -- \eqref{def-sigma}, \eqref{min-beta-noint} --  \eqref{max-case2a-small-j} and   the definitions of $C_\beta$ and $\psi_{\ell,j}$ in \eqref{def-C2} and \eqref{def-psi}, respectively, we obtain 
 \eqref{last-int-nl-noint} when $j= \ell-\lfloor\beta\rfloor-1$.\\ 
\noindent\underline{\textbf{Case 2B}}: Suppose 
  $\beta\in\N$. Note that $y\in [1,m_\kappa)$ and the fact that $i\mapsto m_i$ is increasing implies $m_i^{-1}y\geq 1$ for all $i\leq \kappa-1$ and $m_i^{-1}y\leq 1$ for all $i\geq \kappa$. Then, substituting the expression for $m_i^{-1}$ in \eqref{def-mi}, and $m_{j+1}^{-1}\geq 1$, we obtain
\begin{align}
\frac{C_1}{x_0^{\beta - 1}} \int_{1}^{m_\kappa}  \prod_{i = j+1}^{\ell - 1} (1 \wedge
   m_i^{- 1} y)y^{- \beta} dy&\leq \frac{C_1}{x_0^{\beta - 1}} \int_{1}^{m_\kappa}  \prod_{i = \kappa}^{\ell - 1} (1 \wedge
   m_i^{- 1} y)y^{- \beta} dy\nonumber\\
   &\leq \frac{ C_1}{x_0^{\beta - 1}}(\lambda dx_0)^{\ell-\kappa} \int_{1}^{m_\kappa} \left( \prod_{i = \kappa}^{\ell - 1} (s_i^*)^{d-1}\right)y^{- \beta+\ell-\kappa} dy\nonumber\\
   &\leq \frac{ C_1}{x_0^{\beta - 1}}(\lambda dx_0)^{\ell-\kappa}\left( \prod_{i = \kappa}^{\ell - 1} (s_i^*)^{d-1}\right) \int_{1}^{m_\kappa} \left(\prod_{i=j+2}^{\kappa-1}m_i^{-1}y\right)m_{j+1}^{-1} y^{- \beta+\ell-\kappa} dy\nonumber\\
   &= \frac{ C_1}{x_0^{\beta - 1}}(\lambda dx_0)^{\ell-j-1} \left(\prod_{i=j+1}^{\ell-1}(s_i^*)^{d-1}\right)\int_{1}^{m_\kappa} y^{-\beta+\ell-j-2} dy\nonumber\\
    &\leq\frac{ C_1}{x_0^{\beta - 1}}(\lambda dx_0)^{\beta-1} \left(\prod_{i=j+1}^{\ell-1}(s_i^*)^{d-1}\right)\int_{1}^{m_\kappa} y^{-2} dy\nonumber\\
   \label{case2-int1tomk-intpart2}
     &\leq  C_1(\lambda dx_0)^{\beta-1}  \left( \prod_{i = j+1}^{\ell - 1} (s_i^*)^{d-1}\right).
\end{align}
Combining \eqref{case2-intervals} --  \eqref{max-case2a-large-j} and \eqref{case2-int1tomk-intpart2} with the fact that 
\begin{equation}
\label{min-beta-int}
\min_{n\in\{\kappa+1,\cdots,\ell-1\}}({\beta-1},{\beta-\ell+n-1},1)=1,
\end{equation}
  the observation that $s_{\ell-\beta+1}^*\leq (s_{\ell-\beta+1}^*)^{1-\delta}$ for any $\delta \in (0,1)$ and  the definitions of $C_\beta$ and $\psi_{\ell,j}$ in \eqref{def-C2} and \eqref{def-psi}, respectively, we obtain \eqref{last-int-nl-int}.

\noindent\underline{\textbf{Case 3}}: Suppose $\kappa< \ell$, $j\geq \ell-\lfloor\beta\rfloor-1$ and $\kappa-1\leq j\leq \ell-2$. 
Using \eqref{def-kappa}, this implies  $m_i\geq 1$ for all $i\geq j+1$. We now look at the partition 
$\pi_3=\{[1,m_{j+1}),[m_{j+1},m_{j+2}),\cdots,$ $[m_{\ell-1},\infty)$ 
of the interval $[1,\infty)$.  Note that 
\begin{equation}
\label{case3-intervals}
 \mbox{number of intervals in } \pi_3 =\ell-j\leq \lfloor\beta\rfloor +1.
\end{equation}
Since $j+1\leq \ell-1$, using  \eqref{int1toinfty} with $z=m_{\ell-1}=(\lambda d (\sslm)^{d-1} x_0)^{-1}$ and $j'=j$, we have
\begin{align}
\label{case3intell-1toinfty}
 \frac{C_1}{x_0^{\beta - 1}} \int_{m_{\ell-1}}^{\infty}  \prod_{i = \ell-\lfloor\beta\rfloor}^{\ell - 1} (1 \wedge
   m_i^{- 1} y)y^{- \beta} dy
\leq& \frac{ C_1}{(\beta - 1)} (\lambda d)^{\beta - 1}
  (s_{\ell-1}^*)^{(d - 1) (\beta - 1)}.
\end{align}

 For the  interval $[1,m_{j+1})$ note that  $y\in [1,m_{j+1})$ and the fact that $m_{i}$ is increasing in $i$ implies  $m_i^{-1}y\leq 1$ for all $i\geq j+1$. Hence,  we have 
\begin{align}
\frac{ C_1}{x_0^{\beta - 1}} \int_1^{m_{j+1}} \prod_{i = j+1}^{\ell
   - 1} (1 \wedge m_i^{-1} y) y^{- \beta} dy
  & =  \frac{ C_1}{x_0^{\beta - 1}} (\lambda d x_0)^{\ell - j-1}
   \left(\prod_{i=j+1}^{\ell-1}(s_{i}^*)^{d-1}\right) \int_1^{m_{j+1}}
  y^{- \beta + \ell - j-1} dy.\label{int1nellj+1}
  \end{align}
   Additionally, if $j\geq \ell-\lfloor\beta\rfloor$, note that for $n \in\{ j+2, j+3, \cdots, \ell
- 1\}$, $n\geq j+2\geq\ell-\lfloor\beta\rfloor +2$.  Since  $\ell-\lfloor\beta\rfloor +2 \leq n\leq \ell-1$, we use \eqref{intmn-1to mn} with $j'=j$ and the definition of $\sigma_n$ in \eqref{def-sigma} to obtain
\begin{align}
\label{case3-partition-gen-integral}
 \frac{C_1}{x_0^{\beta - 1}} \int_{m_{n- 1}}^{m_n}  \prod_{i = j+1}^{\ell - 1} (1 \wedge
   m_i^{- 1} y)y^{- \beta} dy   
    \leq  \frac{ C_1}{(\beta - \ell + n - 1)} (\lambda d)^{\beta - 1}
  \sigma_n^{d-1}.
\end{align}
   \noindent\underline{\textbf{Case 3A}}: Suppose $\beta\notin\N$. First, let  $j=\ell-\lfloor\beta\rfloor-1$. Since $\beta-\hat{\beta}=\lfloor\beta\rfloor$ by \eqref{def-hatbeta}, \eqref{int1nellj+1} simplifies to
  \begin{align}
  \label{nointcase3}
\frac{ C_1}{x_0^{\beta - 1}} \int_1^{m_{\ell-\lfloor\beta\rfloor}} \prod_{i = \ell-\lfloor\beta\rfloor}^{\ell
   - 1} (1 \wedge m_i^{-1} y) y^{- \beta} dy   & =  \frac{ C_1}{x_0^{\beta - 1}} (\lambda d x_0)^{\beta-\hat{\beta}}
   \left(\prod_{i=\ell-\lfloor\beta\rfloor}^{\ell-1}(s_{i}^*)^{d-1}\right) \int_1^{m_{\ell-\lfloor\beta\rfloor}}
  y^{- \hat{\beta}} dy\nonumber\\
  & \leq  \frac{ C_1}{(1 - \hat{\beta}) x_0^{\beta - 1}} (\lambda d
  x_0)^{\beta - \hat{\beta}}  \left(\prod_{i=\ell-\lfloor\beta\rfloor}^{\ell-1}(s_{i}^*)^{d-1}\right) (\lambda d (s_{\ell-\lfloor\beta\rfloor}^*)^{d - 1}
  x_0)^{\hat{\beta} - 1}\nonumber\\
  & =  \frac{ C_1}{(1 - \hat{\beta})} (\lambda d)^{\beta - 1}
  \left(\prod_{i=\ell-\lfloor\beta\rfloor+1}^{\ell-1}(s_{i}^*)^{d-1}\right)
  (s_{\ell-\lfloor\beta\rfloor}^*)^{\hat{\beta}(d - 1)} .
\end{align}
 Note that for $n \in\{ \ell-\lfloor\beta\rfloor+2, \ell-\lfloor\beta\rfloor+3, \cdots, \ell
- 1\}$, $n\geq \ell-\lfloor\beta\rfloor +2$.  Since  $\ell-\lfloor\beta\rfloor +2\leq n\leq \ell-1$ and, we use \eqref{intmn-1to mn}, with $j'=\ell-\lfloor\beta\rfloor-1$ and the definition of $\sigma_n$ in \eqref{def-sigma} to obtain
\begin{align}
\label{case3mn-1tomn}
 \frac{C_1}{x_0^{\beta - 1}} \int_{m_{n- 1}}^{m_n}  \prod_{i = \ell-\lfloor\beta\rfloor}^{\ell - 1} (1 \wedge
   m_i^{- 1} y)y^{- \beta} dy   
    \leq  \frac{ C_1}{(\beta - \ell + n - 1)} (\lambda d)^{\beta - 1}
  \sigma_n^{d-1}.
\end{align}
Also, note that  $y\in [m_{\ell-\lfloor\beta\rfloor},m_{\ell-\lfloor\beta\rfloor+1})$ and the fact that $i\mapsto m_i$ is increasing implies $m_i^{-1}y\geq 1$ for all $i\leq \ell-\lfloor\beta\rfloor$ and $m_i^{-1}y\leq 1$ for all $i\geq \ell-\lfloor\beta\rfloor+1$. Hence, using \eqref{def-mi}, we obtain
 \begin{align}
 \label{noint-mnltomnl+1}
 \frac{C_1}{x_0^{\beta - 1}}\int_{m_{\ell-\lfloor\beta\rfloor}}^{m_{\ell-\lfloor\beta\rfloor+1}}  \prod_{i= \ell-\lfloor\beta\rfloor}^{\ell - 1} (1 \wedge
   m_i^{- 1} y)y^{- \beta} dy
    &= \frac{C_1}{x_0^{\beta - 1}}\int_{m_{\ell-\lfloor\beta\rfloor}}^{m_{\ell-\lfloor\beta\rfloor+1}}  (\lambda d x_0)^{\lfloor\beta\rfloor-1} \left(\prod_{i=\ell-\lfloor\beta\rfloor+1}^{\ell-1}(s_{i}^*)^{d-1}\right) y^{- \hat{\beta} -1}
  dy\nonumber\\
   & \leq  \frac{ C_1(\lambda d x_0)^{\beta-\hat{\beta}-1}}{\hat{\beta}x_0^{\beta-1} } \left(\prod_{i=\ell-\lfloor\beta\rfloor+1}^{\ell-1}(s_{i}^*)^{d-1}\right) (\lambda d (s_{\ell-\lfloor\beta\rfloor}^*)^{d - 1} x_0)^{\hat{\beta}}\nonumber\\
  &=  \frac{C_1(\lambda d )^{\beta - 1}}{\hat{\beta}} 
  \left(\prod_{i=\ell-\lfloor\beta\rfloor+1}^{\ell-1}(s_{i}^*)^{d-1}\right)   (s_{\ell-\lfloor\beta\rfloor}^*)^{\hat{\beta}(d - 1)}.
\end{align}
  Combining \eqref{case3-intervals}, \eqref{case3intell-1toinfty}, \eqref{nointcase3} -- \eqref{noint-mnltomnl+1}, \eqref{max-case2a-small-j}, \eqref{min-beta-noint} (since $\kappa\leq \ell-\lfloor\beta\rfloor$)
  and the definitions of $C_\beta$ and $\psi_{\ell,j}$ in \eqref{def-C2} and \eqref{def-psi}, respectively, we obtain \eqref{last-int-nl-noint} when $j=\ell-\lfloor\beta\rfloor-1$.
  
  Now, let $j\geq\ell-\lfloor\beta\rfloor$. Using  \eqref{def-mi}, \eqref{int1nellj+1} simplifies to
  \begin{align}
  \label{case3-int1tomj+1-nointpart2}
\frac{ C_1}{x_0^{\beta - 1}} \int_1^{m_{j+1}} \prod_{i = j+1}^{\ell
   - 1} (1 \wedge m_i^{-1} y) y^{- \beta} dy &\leq\frac{ C_1}{x_0^{\beta-1}}(\lambda d x_0)^{\lfloor\beta\rfloor-1} \left(\prod_{i=j+1}^{\ell-1}(s_{i}^*)^{d-1}\right)\int_1^{m_{j+1}}y^{-\hat{\beta}-1}dy \nonumber\\
   &\leq  \frac{ C_1}{ \hat{\beta} } (\lambda d
  )^{\beta - 1}  \left(\prod_{i=j+1}^{\ell-1}(s_{i}^*)^{d-1}\right).
\end{align} 
  Combining  \eqref{case3-intervals}, \eqref{case3intell-1toinfty}, \eqref{case3-partition-gen-integral}, \eqref{case3-int1tomj+1-nointpart2}, \eqref{min-beta-noint}, \eqref{max-case2a-large-j} (since $\kappa\leq j+1$)
  and  the definitions of $C_\beta$ and $\psi_{\ell,j}$ in \eqref{def-C2} and \eqref{def-psi}, respectively, we obtain \eqref{last-int-nl-noint} when $j\geq\ell-\lfloor\beta\rfloor$.
  
\noindent\underline{\textbf{Case 3B}}: Suppose $\beta\in\N$. First, let $j=\ell-\beta$. Using \eqref{def-mi} and the fact that there exists  $\delta\in (0,1)$ such that $\log x\leq x^\delta$ for all $x\geq 1$, \eqref{int1nellj+1} simplifies to
\begin{align}
\label{case3-int1tomj+1-intpart1}
 \frac{ C_1}{x_0^{\beta - 1}}\int_1^{m_{\ell-\beta+1}} \prod_{i = \ell-\beta+1}^{\ell
   - 1} (1 \wedge m_i^{-1} y) y^{- \beta} dy & \leq  \frac{ C_1}{ x_0^{\beta - 1}}(\lambda d
  x_0)^{\beta-1 }  \left(\prod_{i=\ell-\beta+1}^{\ell-1}(s_{i}^*)^{d-1}\right) \log(\frac{1}{\lambda d (s_{\ell-\beta+1}^*)^{d - 1}
  x_0})\nonumber\\
 & \leq   C_1 (\lambda d)^{\beta - 1}
  \left(\prod_{i=\ell-\beta+2}^{\ell-1}(s_{i}^*)^{d-1}\right)  (s_{\ell-\beta+1}^*)^{(1-\delta)(d - 1)} .
\end{align}
Now, noting that $\ell-\beta\leq n-2$, $s_{n-1}^*\leq 1$, $\delta\in (0,1)$ and $i\mapsto s_i^*$ is non-increasing, we conclude that 
  \begin{align} 
  \label{case3b-max-small-j}
&\max_{n\in\{ (\ell-\beta+2,\cdots,\ell-1\}}\left((s_{\ell-1}^*)^{\beta-1},\sigma_n,
  \left(\prod_{i=\ell-\beta+2}^{\ell-1}s_{i}^*\right)(s_{\ell-\beta+1}^*)^{1-\delta}  \right)=\left(\prod_{i=\ell-\beta+2}^{\ell-1}s_{i}^*\right)
  (s_{\ell-\beta+1}^*)^{1-\delta} .
  \end{align}
  Combining \eqref{case3-intervals},  \eqref{case3intell-1toinfty}, \eqref{case3-partition-gen-integral}, \eqref{case3-int1tomj+1-intpart1},  \eqref{case3b-max-small-j}, \eqref{min-beta-int} (since $\kappa\leq \ell-\beta+1$)
  and the definitions of $C_\beta$ and $\psi_{\ell,j}$ in \eqref{def-C2} and \eqref{def-psi}, respectively, we obtain \eqref{last-int-nl-int} when $j=\ell-\beta$.
  
On the other hand, if $j\geq\ell-\beta+1$, then \eqref{int1nellj+1} simplifies to
\begin{align}
\label{case3-int1tomj+1-intpart2}
\frac{ C_1}{x_0^{\beta - 1}} \int_1^{m_{j+1}} \prod_{i = j+1}^{\ell
   - 1} (1 \wedge m_i^{-1} y) y^{- \beta} dy & \leq  \frac{ C_1}{x_0^{\beta - 1}} (\lambda d x_0)^{\beta-1}
   \left(\prod_{i=j+1}^{\ell-1}(s_{i}^*)^{d-1}\right) \int_1^{m_{j+1}}
  y^{- 2} dy\nonumber\\
  &\leq  C_1 (\lambda d)^{\beta-1}
   \left(\prod_{i=j+1}^{\ell-1}(s_{i}^*)^{d-1}\right).
\end{align}
Combining \eqref{case3-intervals},  \eqref{case3intell-1toinfty}, \eqref{case3-partition-gen-integral}, \eqref{case3-int1tomj+1-intpart2}, \eqref{max-case2a-large-j}, \eqref{min-beta-int} (since $\kappa\leq j+1$) and the definitions of $C_\beta$ and $\psi_{\ell,j}$ in \eqref{def-C2} and \eqref{def-psi}, respectively, we obtain \eqref{last-int-nl-int} when $j\geq\ell-\beta+1$.
\end{proof}

\noindent\textbf{Acknowledgements:} The authors were partially supported by NSF grants DMS-1713032 and  DMS-1407504 and ARO grant  W911NF2010133.

\begin{appendices}
\section{Types of Distributions}
\label{distribution}
In this section, we will look at the distribution functions of the various distributions that we considered in Section \ref{num}.
\begin{itemize}
\item
Gamma distribution: The Gamma distribution with unit mean is parameterized by $\alpha$, and
$$\bar{G}(x)=\frac{\gamma (\alpha,\alpha x)}{\Gamma(\alpha)},$$
where $\Gamma$ denotes the Gamma function
$$\Gamma(z)=\int_0^\infty x^{z-1}e^{-x}dx,$$
 and $\gamma$ denotes the lower incomplete gamma function,
 $$\gamma(a,x)=\int_0^x z^{a-1}e^{-z} dz.$$
\item
Weibull distribution: The Weibull distribution with unit mean is parameterized by $a$, and
$$\bar{G}(x)=e^{(-x/b)^{a}},$$
where $b=\frac{1}{\Gamma(1+\frac{1}{a})}$.
\item
Lognormal distribution: The Lognormal distribution with unit mean is parameterized by $\sigma$, and
$$\bar{G}(x)=\frac{1}{2}-\frac{1}{2}erf\left[\frac{\log x-\mu}{\sqrt{2}\sigma}\right],$$
where $\mu=-\frac{\sigma^2}{2}$, and $erf$ denotes the error function
$$erf(x)=\frac{2}{\sqrt{\pi}}\int_0^xe^{-z^2}dz.$$
\item
Pareto distribution: The Pareto distribution with unit mean is parameterized by $\alpha$, and
\begin{equation*}
\bar{G}(x)=\left\{\begin{array}{ll}
1&\qquad \text{if }x\in[0,x_m)\\
\left(\frac{x_m}{x}\right)^\alpha&\qquad \text{if }x\geq x_m,
\end{array}
\right.
\end{equation*}where $x_m=\frac{\alpha-1}{\alpha}$.
\item
Burr distribution: The Burr distribution with unit mean is parameterized by $k$ and $c$, and
$$\bar{G}(x)=(1+x^c)^{-k},$$
where $kB(k-\frac{1}{c},1+\frac{1}{c})=1$, and $B$ denotes the beta function, i.e.,
$$B(a,b)=\int_0^1t^{(a-1)}(1-t)^{(b-1)}dt.$$
\end{itemize}
\section{Absolute Continuity of the Invariant State}
\label{subs-abscont}

We start by making a simple observation.

\begin{lemma}
If $(\nu_\ell)_{\ell\in \N}$ is a solution to the hydrodynamic equations with initial condition $\nu(0)$, then for all Lebesgue integrable functions $f$ on $[0,\infty)$, and all $\ell\geq 1$,
\begin{align}\label{Fluid_integrable}
    \langle f, \nu_\ell(t)\rangle = &\langle f(\cdot+t)\frac{\bar G(\cdot+t)}{\bar G(\cdot)},\nu_\ell(0)\rangle +\int_{[0,t]} f(t-s)\bar G(t-s)dD_{\ell+1}(s)\\
    & + \int_0^t \langle f(\cdot+t-s)\frac{\bar G(\cdot+t-s)}{\bar G(\cdot)},\eta_\ell(s)\rangle ds.\notag
\end{align}
\end{lemma}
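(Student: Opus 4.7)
The approach is a standard two-step density argument that extends the identity \eqref{Fluid_f}, which by hypothesis holds for $f\in\mathbb{C}_b[0,\infty)$, first to all bounded Borel measurable functions by the functional monotone class theorem, and then to Lebesgue integrable functions by truncation and monotone convergence.

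The plan is as follows. First, let $\mathcal{H}$ denote the collection of bounded Borel measurable functions $f:[0,\infty)\to\mathbb{R}$ for which \eqref{Fluid_integrable} holds. By assumption, $\mathbb{C}_b[0,\infty)\subset\mathcal{H}$, and $\mathbb{C}_b[0,\infty)$ is a vector space containing the constants and generating $\mathcal{B}([0,\infty))$. Linearity in $f$ of both sides is immediate. To close $\mathcal{H}$ under uniformly bounded pointwise limits, I would apply dominated convergence separately to each of the four integrals in \eqref{Fluid_integrable}: the ratio $\bar G(\cdot+t)/\bar G(\cdot)$ is bounded by $1$ on the support of $\nu_\ell(0)$, which is a sub-probability measure, so the first term is controlled by $\|f_n\|_\infty$; the second term is controlled by $\|f_n\|_\infty D_{\ell+1}(t)$, finite by \eqref{Fluid_bound}; and the third by $\|f_n\|_\infty \int_0^t \langle\mathbf{1},|\eta_\ell(s)|\rangle\, ds$, which is finite since by \eqref{Fluid_R} the total mass of $\eta_\ell(s)$ is bounded by $\lambda d$ (using $\mathfrak{P}_d\leq d$ from \eqref{Pdmax}). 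By the functional monotone class theorem, $\mathcal{H}$ contains every bounded Borel measurable function.

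Second, for a Lebesgue integrable $f$, decompose $f=f^+-f^-$ and set $f_n^\pm:=f^\pm\wedge n$. Each $f_n^\pm$ is bounded Borel measurable, so \eqref{Fluid_integrable} holds for $f_n^\pm$ by the first step. Passing $n\to\infty$, monotone convergence applies on both sides termwise, because $f_n^\pm\nearrow f^\pm$ and the weight factors are nonnegative. Subtracting the limiting identities for $f^+$ and $f^-$ yields \eqref{Fluid_integrable} for $f$.

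The main subtlety is to ensure that the monotone limits are finite, so that the subtraction $f^+-f^-$ is legitimate. This reduces to checking that the three measures on the right-hand side — $\nu_\ell(0)$ integrated against a shifted function, $dD_{\ell+1}(s)$, and $\eta_\ell(s)\,ds$ — are all absolutely continuous with respect to Lebesgue measure on their relevant domains (so that Lebesgue integrability of $f$ translates into integrability against each of them). Absolute continuity of $dD_{\ell+1}$ is immediate from \eqref{Fluid_D}, and for $\eta_\ell(s)\,ds$ it follows from \eqref{Fluid_R} together with the absolute continuity of $\nu_{\ell-1}(s)-\nu_\ell(s)$ in space (once one invokes the argument underlying Lemma \ref{lem-abscont} applied pathwise, or an equivalent integration-by-parts representation); the contribution from $\nu_\ell(0)$ is handled by interpreting $\langle f(\cdot+t)\bar G(\cdot+t)/\bar G(\cdot),\nu_\ell(0)\rangle$ as integration against the finite measure $\nu_\ell(0)$, for which Lebesgue integrability of $f$ combined with the factor $\bar G(\cdot+t)\leq \bar G(\cdot)$ bounds the total variation. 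This verification is the only nontrivial step; everything else is routine bookkeeping.
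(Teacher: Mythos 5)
Your core argument — extend the identity from $\mathbb{C}_b[0,\infty)$ to bounded Borel functions via a monotone-class argument, then to integrable functions by truncating $f^{\pm}\wedge n$ and passing to the limit — is sound and is essentially the same approximation scheme the paper uses. The paper is a bit more hands-on: it constructs an explicit piecewise-linear sequence in $\mathbb{C}_b$ converging pointwise to $\mathbbm{1}_{[a,b]}$, applies dominated convergence to get indicators, then uses linearity to get simple functions and a further limiting argument for general integrable $f$. Your invocation of the functional monotone class theorem collapses the indicator/simple-function stage into a single abstract step; this buys brevity at no loss of rigor, since $\mathbb{C}_b[0,\infty)$ is a multiplicative vector space generating $\mathcal{B}([0,\infty))$ and your DCT checks for each term (using that $\bar G(\cdot+t)/\bar G(\cdot)\leq 1$, that $D_{\ell+1}(t)<\infty$ by \eqref{Fluid_bound}, and that $\langle\mathbf{1},\eta_\ell(s)\rangle\leq\lambda d$ by \eqref{Pdmax}) are exactly what is needed.

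The last paragraph, however, contains a genuine misstep that you should delete or recast. You propose to establish finiteness of the limits by showing the relevant measures are absolutely continuous with respect to Lebesgue measure, and in particular you invoke ``the argument underlying Lemma \ref{lem-abscont} applied pathwise.'' But Lemma \ref{lem-abscont} is \emph{proved using this very lemma}, so appealing to it here is circular. Moreover, the claim that Lebesgue integrability of $f$ together with $\bar G(\cdot+t)\leq\bar G(\cdot)$ ``bounds the total variation'' against $\nu_\ell(0)$ does not hold: if $\nu_\ell(0)$ had an atom at $x_0$ and $f$ were integrable but unbounded near $x_0+t$, the first term could be infinite. The paper itself sidesteps this by working pathwise with monotone limits and not insisting on finiteness; the cleanest fix is to either prove the identity first for nonnegative $f$ with both sides valued in $[0,\infty]$ (so the subtraction $f^+-f^-$ is only carried out when the relevant integrals are finite), or to observe that the only application in the paper (Lemma \ref{lem-abscont}) uses bounded indicators $\mathbbm{1}_{[a,b]}$, for which your first step already suffices and the second step is unnecessary.
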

\begin{proof}
We  show below that \eqref{Fluid_integrable} holds for indicator functions. 
Indeed, fix $0\leq a\leq b<\infty$ consider a sequence $(\varepsilon_n)_{n\geq 1}$ with $\varepsilon_n<1$ for each 
$n\geq 1$ and $\varepsilon_n\downarrow 0$. For each $n\geq 1$, consider 
$f_n\in\mathbb{C}_b[0,\infty)$ defined by
\begin{eqnarray*}
f_n(x):=\left\{\begin{array}{rl}
1\quad& x\in[a,b],\\
\text{linear}\quad &x\in[a-\varepsilon_n,a)\cup(b,b+\varepsilon_n],\\
0\quad&\text{otherwise}.
\end{array}
\right.
\end{eqnarray*}
Then $f_n\rightarrow\mathbbm{1}_{[a,b]}$ pointwise as $n\rightarrow\infty$. Using \eqref{Fluid_f} with $f=f_n$ and the Dominated Convergence theorem, we have
\begin{align*}
  \lim_{n\rightarrow\infty}  \langle f_n, \nu_\ell(t)\rangle = &\lim_{n\rightarrow\infty}\left[\langle f_n(\cdot+t)\frac{\bar G(\cdot+t)}{\bar G(\cdot)},\nu_\ell(0)\rangle\right. +\int_{[0,t]} f_n(t-s)\bar G(t-s)dD_{\ell+1}(s)\\
    & \qquad\quad\left.+ \int_0^t \langle f_n(\cdot+t-s)\frac{\bar G(\cdot+t-s)}{\bar G(\cdot)},\eta_\ell(s)\rangle ds\right]\\
    = &\langle \mathbbm{1}_{[a,b]}(\cdot+t)\frac{\bar G(\cdot+t)}{\bar G(\cdot)},\nu_\ell(0)\rangle +\int_{[0,t]} \mathbbm{1}_{[a,b]}(t-s)\bar G(t-s)dD_{\ell+1}(s)\\
    & + \int_0^t \langle \mathbbm{1}_{[a,b]}(\cdot+t-s)\frac{\bar G(\cdot+t-s)}{\bar G(\cdot)},\eta_\ell(s)\rangle ds
\end{align*}
By linearity, it then follows that \eqref{Fluid_integrable} holds for all simple functions. Since any Lebesgue integrable function can be represented as a monotone limit of simple functions, another application of the DCT shows that \eqref{Fluid_integrable} is satisfied whenever $f$ is a Lebesgue integrable function.
\end{proof}

In what follows, $\nu^*$ is an invariant state of the hydrodynamic equations.
\begin{proof}[Proof of Lemma \ref{lem-abscont}]
For $0 \leq a < b < \infty$, substituting $\nu=\nu^*$ and $f=\mathbbm{1}_{[a,b]}$ in \eqref{Fluid_integrable}, and using \eqref{d*}, we have for each $t>0$,
\begin{align}
\label{nu_del}
    \nu_\ell^*[a,b] = &\int_0^L \mathbbm{1}_{[a,b]}(x+t)\frac{\bar G(x+t)}{\bar G(x)}\nu_\ell^*(dx) +\int_{[0,t]} \mathbbm{1}_{[a,b]}(t-s)\bar G(t-s)\langle h,\nu_{\ell+1}^*\rangle ds\nonumber\\
    & + \int_0^t \int_0^L \mathbbm{1}_{[a,b]}(x+t-s)\frac{\bar G(x+t-s)}{\bar G(x)}\eta_\ell^*(dx) ds\nonumber\\
    = &\int_{(a-t)^+\wedge L}^{(b-t)^+\wedge L} \frac{\bar G(x+t)}{\bar G(x)}\nu_\ell^*(dx) +\langle h,\nu_{\ell+1}^*\rangle \int_{(t-b)^+}^{(t-a)^+} \bar G(t-s)ds + \int_0^t \int_{(a-(t-s))^+\wedge L}^{(b-(t-s))^+\wedge L}\frac{\bar G(x+t-s)}{\bar G(x)}\eta_\ell^*(dx) ds\nonumber\\
    \leq &\nu_\ell^*[(a-t)^+\wedge L,(b-t)^+\wedge L] +\langle h,\nu_{\ell+1}^*\rangle (b-a) + \int_0^t \int_{(a-(t-s))^+\wedge L}^{(b-(t-s))^+\wedge L}\frac{\bar G(x+t-s)}{\bar G(x)}\eta_\ell^*(dx) ds .
\end{align}
The relation \eqref{nu*} implies that for $\ell=1$,
\begin{align}
\label{nu1*est}
 \int_0^t \int_{(a-(t-s))^+\wedge L}^{(b-(t-s))^+\wedge L}\frac{\bar G(x+t-s)}{\bar G(x)}\eta_1^*(dx) ds
 \leq& \int_{(t-b)^+}^{(t-a)+}\bar G(t-s)\lambda(1-\langle\f1,\nu_1^*\rangle^d) ds
 \leq \lambda(b-a).
\end{align}
\textbf{Claim:}
Given  $0\leq a<b<\infty$,
\begin{equation}
\label{abs-cont-rec}
\nu_\ell^*[a,b]\leq A_\ell (b-a)\qquad\ell\geq 1,
\end{equation}
where
\begin{equation}
\label{Aell-def}
A_\ell:=\left\{\begin{array}{ll}
\langle h,\nu_2^*\rangle+\lambda&\qquad\ell=1,\\
\langle h,\nu_{\ell+1}^*\rangle+d\lambda tA_{\ell-1}&\qquad\ell\geq 2.
\end{array}
\right.
 \end{equation}
The claim for $\ell =1$ follows on using  \eqref{nu1*est} and \eqref{nu_del}, with $\ell=1$, and sending $t\rightarrow\infty$.
 
Now, suppose \eqref{abs-cont-rec} holds
 for some $\ell-1$ with $\ell\geq 2$. For $0\leq s\leq t<\infty$, 
 using \eqref{nu*}, the inequality $\mathfrak{P}_d(x,y)\leq d$ $\forall x,y\geq 0$, and \eqref{abs-cont-rec}, 
 with $\ell$ replaced by $\ell-1$, we have
\begin{align}
 \int_0^t \int_{(a-(t-s))^+\wedge L}^{(b-(t-s))^+\wedge L}\frac{\bar G(x+t-s)}{\bar G(x)}\eta_\ell^*(dx) ds
 &\leq 
 \int_0^t \int_{(a-(t-s))^+\wedge L}^{(b-(t-s))^+\wedge L}\frac{\bar G(x+t-s)}{\bar G(x)}d\lambda (\nu_{\ell-1}^*(dx)-\nu_{\ell}^*(dx)) ds\nonumber\\
&\leq d\lambda\int_0^t\nu_{\ell-1}^*((a-(t-s))^+\wedge L,(b-(t-s))^+\wedge L) ds\nonumber\\
&\leq d\lambda t A_{\ell-1}(b-a).\nonumber
\end{align}
 When combined with \eqref{nu_del} and\eqref{Aell-def}, and sending $t\rightarrow\infty$, the above equation yields \eqref{abs-cont-rec}. 
 Given $\varepsilon_1>0$, define $\delta_1(\varepsilon_1):=\varepsilon_1/{2(\langle h,\nu_{2}^*\rangle+\lambda)}$.  Let $E\subset[0,\infty)$ be a Lebesgue measurable set  with $\mu(E)<\delta_1(\varepsilon_1)$, where $\mu$ denotes Lebesgue measure. Since
(by, e.g., \cite[Lemma 1.17]{Folland}) 
\[\mu(E)=\inf\left\{\sum_{i\in\mathbb{N}}\mu((a_i,b_i)):E\subset\bigcup_{i\in\mathbb{N}}(a_i,b_i)\right\},\]
  there exists $((a_i,a_i+\Delta_i))_{i\in\mathbb{N}}$ such that $E\subset\bigcup_{i\in\mathbb{N}}(a_i,a_i+\Delta_i)$ and $\sum_{i\in\mathbb{N}} \mu((a_i,a_i+\Delta_i))=\sum_{i\in\mathbb{N}}\Delta_i<2\delta_1(\varepsilon_1)$.
Since $\langle h,\nu_2^*\rangle\geq 0$ by Lemma \ref{lem-hus},  using \eqref{abs-cont-rec} and \eqref{Aell-def} with $\ell=1$, we obtain for all $i$,
\begin{eqnarray*}
\nu_1^*(E)&\leq& \sum_{i\in\mathbb{N}} \nu_1^*(a_i,a_i+\Delta_i)\\
&\leq&\sum_{i\in\mathbb{N}}( \langle h,\nu_{2}^*\rangle+\lambda)\Delta_i\\
&=&2(\langle h,\nu_{2}^*\rangle+\lambda)\delta_1(\varepsilon_1)\\
&=&\varepsilon_1.
\end{eqnarray*}

Likewise, fix $\ell\geq 2$. Given $\varepsilon_\ell>0$, define $\delta_\ell(\varepsilon_\ell)={\varepsilon_\ell}/{2n_\ell}$. Now suppose $E\subset[0,\infty)$ is  Lebesgue measurable with $\mu(E)<\delta_\ell$. Then there exist $\left((a_i,a_i+\Delta_i)\right)_{i\in\mathbb{N}}$ such that $E\subset\bigcup_{i\in\mathbb{N}}(a_i,a_i+\Delta_i)$ and $\sum_{i\in\mathbb{N}} \mu((a_i,a_i+\Delta_i))=\sum_{i\in\mathbb{N}}\Delta_i<2\delta_\ell$.  Using \eqref{abs-cont-rec}, we see that,
 \begin{equation*}
   \sum_{i\in\mathbb{N}}\nu_\ell^*[a_i,a_i+\Delta_i] \leq \sum_{i\in\mathbb{N}}A_\ell\Delta_i. \end{equation*}
Hence, using $\langle h,\nu_\ell^*\rangle \geq 0$ for all $\ell\geq 1$ from Lemma \ref{lem-hus}, we have,
\begin{equation*}
\nu_\ell^*(E)\leq \sum_{i\in\mathbb{N}}\nu_\ell^*(a_i,a_i+\Delta_i)
\leq \sum_{i\in\mathbb{N}} A_\ell\Delta_i
<2A_\ell\delta_\ell(\varepsilon_\ell)
=\varepsilon_\ell.
\end{equation*}
Thus, we have shown that for every $\ell\geq 1$, given  $\varepsilon_\ell>0$, $\exists\delta_\ell(\varepsilon_\ell)>0$ such that
 $\mu(E)<\delta_\ell(\varepsilon_\ell)\implies\nu_\ell^*(E)<\varepsilon_\ell$. This proves $\nu_\ell^*$ is absolutely continuous with respect to $\mu$.
\end{proof}

\section{Uniqueness of the Invariant State when $d=2$}
\label{sec-unique}
In this section we prove Proposition \ref{th-uniqueness}. The proof will rely on the following elementary lemma.

\begin{lemma}
\label{uniquefixed}
Let $b<\infty$ and let $f$ be a function on $[0,b]$ such that $f$ is continuously differentiable on $(0,b)$, $f(x)\neq 0$ for $x\in\{0,b\}$, has at least one root in $(0,b)$ and the derivative of $f$ at every root in $(0,b)$ is negative, i.e., 
$$ f(x)=0 \implies f'(x)<0,\qquad \forall x\in(0,b) .$$
 Then there exists a unique $x\in(0,b)$ such that $f(x)=0$.
\end{lemma}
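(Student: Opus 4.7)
The plan is to argue uniqueness by contradiction, since existence of a root in $(0,b)$ is already part of the hypothesis. Assuming $f$ has two or more roots in $(0,b)$, I would start by letting $x_1$ be the \emph{smallest} root. This infimum is well-defined because the root set is nonempty, bounded below, and closed by continuity of $f$; the boundary hypothesis $f(0) \neq 0$ together with $f(b) \neq 0$ (and continuity) guarantees $x_1 \in (0,b)$, so $x_1$ is itself a root to which the standing assumption applies. From $f'(x_1) < 0$ I obtain $\delta > 0$ such that $f(x) < 0$ on $(x_1, x_1 + \delta)$.

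Next, I would set $x^* := \inf\{x \in (x_1, b] : f(x) = 0\}$. Because we are assuming a second root exists, this infimum is taken over a nonempty set; by continuity $f(x^*) = 0$, and the bounds $x_1 + \delta \leq x^* < b$ (the upper bound using $f(b) \neq 0$, the lower bound using the right-neighborhood behavior just established) place $x^*$ strictly inside $(0,b)$. On the open interval $(x_1, x^*)$ the function $f$ is continuous and, by definition of $x^*$, has no zeros; so it has constant sign on that interval, and the sign must be negative since $f < 0$ on $(x_1, x_1 + \delta) \subset (x_1, x^*)$.

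The contradiction now comes from a one-sided difference-quotient computation at $x^*$: for $x \in (x_1, x^*)$ sufficiently close to $x^*$,
\[
\frac{f(x^*) - f(x)}{x^* - x} \;=\; \frac{-f(x)}{x^* - x} \;>\; 0,
\]
so letting $x \uparrow x^*$ and invoking differentiability gives $f'(x^*) \geq 0$. This contradicts the hypothesis that $f'$ is strictly negative at every root of $f$ in $(0,b)$, since $x^*$ is such a root. Hence the assumption of two roots fails, and uniqueness holds.

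I do not anticipate any real obstacle here; the argument is essentially a sign-chasing application of continuity and the one-sided derivative. The only point that requires care is verifying that both $x_1$ and $x^*$ lie strictly inside $(0,b)$ so that the differentiability and derivative-sign hypotheses may legitimately be applied, and this is precisely where the boundary assumptions $f(0) \neq 0$ and $f(b) \neq 0$ are needed.
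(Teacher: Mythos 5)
Your proof is correct and follows essentially the same route as the paper's: assume a second root, use $f'<0$ at one root to force $f<0$ immediately to its right, and derive a contradiction at a subsequent root where a one-sided difference quotient forces $f'\geq 0$. If anything, your identification of $x^*$ as the infimum of the later zero set makes the final step more explicit than the paper's brief appeal to the intermediate value theorem.
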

\begin{proof}
First note that since $f'$ is negative at all its roots, there does not exist any interval $I\subset(0,b)$ such that $f\equiv 0$ on $I$. Let $0<x_1<x_2<b$ be two distinct roots of $f$. Since $f'(x_1)<0$, $ f'(x_2)<0$ and $f$ has a  continuous derivative, there exist $y_1>x_1$, and $y_2<x_2$ such that $f(y_1)<0$ and $f(y_2)>0$. By the intermediate value theorem there exists at least one $x_*\in(x_1,x_2)$ such that $f(x_*)=0$ and  $f'(x_*)\geq0$. This  contradicts our assumption, and hence, $f$ has a unique root.
\end{proof}

\begin{proof}[Proof of Proposition \ref{th-uniqueness}]
 Fix $d=2$. By definition we know $s_1^*=\lambda$ and we set $r_1\equiv 1$ and note that \eqref{eqn-ssl} holds. Now, suppose for any $m\geq 2$, $\vsl$ and $\ssl$ satisfy \eqref{eqn-vsl} and \eqref{eqn-ssl} for $\ell=1,2, \cdots, m-1$. Then by Remark \ref{rem-s-fixed}, it suffices to show that  $F_{m}$ has a unique fixed point in $[0,s_{m-1}^*]$, which then must be $s_m^*$. 
 Define $H_m= F_m(s)-s$, $s\in[0,s_{m-1}^*]$. Then, setting $d=2$ in the expression for $\beta_2$ in \eqref{def_poly} and using \eqref{eq-fld},
\begin{align}
\label{def-Gl}
H_m(s)
=&\lambda s^2 \int_0^L e^{-\lambda  ( s_{m-1}^* + s)  x} \bar{G}(x) dx
+ \lambda  ( s_{m-1}^* +  s) \int_0^L \left(
  \int_0^x e^{-\lambda  (s_{m-1}^*+ s) (x-u)} r_{m-1} (u) du
\right) \bar{G}(x) dx-s.
\end{align} 
 Note that 
\[H_m(0)= \lambda   s_{m-1}^*  \int_0^L \left(
  \int_0^x e^{-\lambda  s_{m-1}^* (x-u)} r_{m-1} (u) du
\right) \bar{G}(x) dx>0.\]
Also, integrating the right-hand side of \eqref{def-Gl} by parts, and using the fact that \eqref{eqn-vsl} and \eqref{eqn-ssl}, with $\ell=m-1$, imply $r_{m-1}(0)=\lambda (s_{m-1}^*)^2$ and $s_{m-1}^*=\int_0^Lr_{m-1}(x)\bar{G}(x) dx$, we conclude that
$$H_m(s)=s_{m-1}^*-s-\left[\lambda(s_{m-1}^*)^2-\lambda s^2\right]\int_0^Le^{-\lambda(s_{m-1}^*+s)x}\bar{G}(x)dx-\int_0^L\left(\int_0^xe^{-\lambda(s_{m-1}^*+s)(x-u)}r_{m-1}'(u)du\right)\bar{G}(x)dx.$$
Since $r_{m-1}'\geq 0$ by Lemma \ref{vprop}, we have
$H_m(s_{m-1}^*)<0$.
Thus, $H_m(x)\neq 0$ for $x\in\{0,s_{m-1}^*\}$.
On differentiating \eqref{def-Gl}, we obtain
\begin{align*}
H_m'(s)=&\int_0^Le^{-\lambda(s_{m-1}^*+s)x}\left[2\lambda  s-\lambda^2 s^2  x\right] \bar{G}(x)dx+\lambda\int_0^L\left(\int_0^xe^{-\lambda(s_{m-1}^*+s)(x-u)}r_{m-1}(u) du\right)\bar{G}(x) dx\\
&-\int_0^L\left(\int_0^xe^{-\lambda(s_{m-1}^*+s)(x-u)}r_{m-1}(u)\lambda^2(s_{m-1}^*+s)(x-u)du\right)\bar{G}(x) dx-1.
\end{align*}
Using \eqref{eq-fld} with $d=2$ and $\ell=m-1$ 
 we see that
\begin{align*}
\lambda\int_0^L\left(\int_0^xe^{-\lambda(s_{m-1}^*+s)(x-u)}r_{m-1}(u)du\right) \bar{G}(x)dx
&=\frac{1}{(s_{m-1}^*+s)}\left(F_m(s)-\lambda s^2\int_0^Le^{-\lambda (s_{m-1}^*+s)x} \bar{G}(x)dx\right).
\end{align*}
Now, since $s_1^*=\lambda$ and $r_{m-1}$ satisfies \eqref{eqn-vsl} and therefore \eqref{vsl2}, Lemmas \ref{vprop} and \ref{lem-s1eqbm} imply the monotonicity of $r_{m-1}$, we have for all $u\in [0,L)$, $r_{m-1}(u)\geq r_{m-1}(0)\geq \lambda(s_{m-1}^*)^2$. Thus,
\begin{align*}
&\int_0^L\left(\int_0^xe^{-\lambda(s_{m-1}^*+s)(x-u)}r_{m-1}(u)\lambda^2(s_{m-1}^*+s)(x-u)du\right) \bar{G}(x)dx\\
&\quad \geq \lambda^3 (s_{m-1}^*)^2(s_{m-1}^*+s)\int_0^L\left(\int_0^xe^{-\lambda(s_{m-1}^*+s)(x-u)}(x-u)du\right)\bar{G}(x) dx\\
&\quad =\lambda^3 (s_{m-1}^*)^2(s_{m-1}^*+s)\int_0^L\left[\frac{1}{\lambda^2(s_{m-1}^*+s)^2}-\frac{e^{-\lambda(s_{m-1}^*+s)x}}{\lambda^2(s_{m-1}^*+s)^2}-\frac{xe^{-\lambda(s_{m-1}^*+s)x}}{\lambda(s_{m-1}^*+s)}\right]\bar{G}(x)dx\\
&\quad=\frac{\lambda(s_{m-1}^*)^2}{(s_{m-1}^*+s)}\int_0^L(1-e^{-\lambda(s_{m-1}^*+s)x})\bar{G}(x)dx-\lambda^2 (s_{m-1}^*)^2\int_0^Le^{-\lambda(s_{m-1}^*+s)x}x \bar{G}(x)dx\\
&\quad=\frac{\lambda(s_{m-1}^*)^2}{(s_{m-1}^*+s)}\left(1-\int_0^Le^{-\lambda(s_{m-1}^*+s)x}\bar{G}(x)dx\right) -\lambda^2 (s_{m-1}^*)^2\int_0^Le^{-\lambda (s_{m-1}^*+s)x}x \bar{G}(x)dx.
\end{align*}
Combining the last three displays and rearranging, we obtain
\begin{align*}
H_m'(s)\leq &\int_0^Le^{-\lambda(s_{m-1}^*+s)x}\left[2\lambda  s-\lambda^2 s^2 x\right] \bar{G}(x)dx+\frac{F_\ell(s)}{(s_{m-1}^*+s)}-1-\frac{\lambda s^2}{(s_{m-1}^*+s)}\int_0^Le^{-\lambda(s_{m-1}^*+s)x}\bar{G}(x) dx\\
&-\frac{\lambda(s_{m-1}^*)^2}{(s_{m-1}^*+s)}\left(1-\int_0^Le^{-\lambda(s_{m-1}^*+s)x}\bar{G}(x) dx\right)+\lambda^2 (s_{m-1}^*)^2\int_0^Le^{-\lambda (s_{m-1}^*+s)x}x \bar{G}(x) dx\\
=&\frac{F_\ell(s)}{(s_{m-1}^*+s)}-1-\frac{\lambda(s_{m-1}^*)^2}{(s_{m-1}^*+s)}+\left(2\lambda  s-\frac{\lambda s^2}{(s_{m-1}^*+s)}+\frac{\lambda(s_{m-1}^*)^2}{(s_{m-1}^*+s)}\right) \int_0^Le^{-\lambda(s_{m-1}^*+s)x}\bar{G}(x)dx\\
&+\left(\lambda^2 (s_{m-1}^*)^2-\lambda^2 s^2  \right)\int_0^Le^{-\lambda (s_{m-1}^*+s)x}x \bar{G}(x)dx\\
=&\frac{(F_\ell(s)-\lambda(s_{m-1}^*)^2)}{(s_{m-1}^*+s)}-1 +\left(2\lambda  s+\frac{\lambda((s_{m-1}^*)^2-s^2)}{(s_{m-1}^*+s)}\right) \int_0^Le^{-\lambda(s_{m-1}^*+s)x}\bar{G}(x)dx\\
&+\lambda^2 ((s_{m-1}^*)^2-s^2) \int_0^Le^{-\lambda (s_{m-1}^*+s)x}x  \bar{G}(x)dx\\
=&\frac{(F_\ell(s)-\lambda(s_{m-1}^*)^2)}{(s_{m-1}^*+s)}-1 +\lambda(s_{m-1}^*+s) \int_0^Le^{-\lambda(s_{m-1}^*+s)x}\bar{G}(x)dx\\
&+\lambda^2 ((s_{m-1}^*)^2-s^2)  \int_0^Le^{-\lambda (s_{m-1}^*+s)x}x \bar{G}(x)dx.
\end{align*}
Using H\"older's inequality, we have 
\begin{align*}
\int_0^Le^{-\lambda(s_{m-1}^*+s)x}\bar{G}(x)dx &\leq \left(\int_0^Le^{-2\lambda(s_{m-1}^*+s)x}dx\right)^{1/2}\left(\int_0^L\bar{G}^2(x)dx\right)^{1/2}\\
&\leq\left(\int_0^L e^{-2\lambda(s_{m-1}^*+s)x}dx\right)^{1/2}\\
&\leq\frac{1}{\sqrt{2\lambda(s_{m-1}^*+s)}},
\end{align*}
and
\begin{align*}
\int_0^Lx e^{-\lambda(s_{m-1}^*+s)x}\bar{G}(x)dx &\leq \left(\int_0^L x^2 e^{-2\lambda(s_{m-1}^*+s)x}dx\right)^{1/2}\left(\int_0^L\bar{G}^2(x)dx\right)^{1/2}\\
&\leq\left(\int_0^L x^2 e^{-2\lambda(s_{m-1}^*+s)x}dx\right)^{1/2}\\
&\leq\frac{1}{2\sqrt{\lambda^3(s_{m-1}^*+s)^3}}.
\end{align*}
Combining the last three displays and evaluating $H_m'$ at any fixed point $\tilde{s}_{m}^*$ of $F_m$ in $[0,s_{m-1}^*]$, 
\begin{align}
\label{Hprime-upbnd}
H_m'(\tilde{s}_{m}^*)\leq &-\frac{s_{m-1}^*+\lambda(s_{m-1}^*)^2}{(s_{m-1}^*+\tilde{s}_{m}^*)}+\left(\frac{1}{\sqrt{2}}+\frac{1}{2}\right)\sqrt{\lambda(s_{m-1}^*+\tilde{s}_{m}^*)}
-\frac{\lambda \tilde{s}_{m}^*}{\sqrt{\lambda(s_{m-1}^*+\tilde{s}_{m}^*)}} =K_{m}(\tilde{s}^*_m),
\end{align}
where for $s\in[0,s_{m-1}^*]$,
$$K_m(s)=-\frac{s_{m-1}^*+\lambda(s_{m-1}^*)^2}{(s_{m-1}^*+s)}+\left(\frac{1}{\sqrt{2}}+\frac{1}{2}\right)\sqrt{\lambda(s_{m-1}^*+s)}
-\frac{\lambda s}{\sqrt{\lambda(s_{m-1}^*+s)}}  .$$
Differentiating $K_m$ we obtain
\begin{align*}
K_m'(s)=&\frac{s_{m-1}^*+\lambda(s_{m-1}^*)^2}{(s_{m-1}^*+s)^2}+\frac{1}{2}\left(\frac{1}{\sqrt{2}}+\frac{1}{2}\right)\frac{\lambda}{\sqrt{\lambda(s_{m-1}^*+s)}}-\frac{\lambda }{\sqrt{\lambda(s_{m-1}^*+s)}}+\frac{\lambda^2 s}{2\lambda(s_{m-1}^*+s)\sqrt{\lambda(s_{m-1}^*+s)}}\\
=&\frac{(s_{m-1}^*+\lambda(s_{m-1}^*)^2)}{(s_{m-1}^*+s)^2}+\frac{1}{2}\left(\frac{1}{\sqrt{2}}+\frac{1}{2}\right)\frac{\lambda}{\sqrt{\lambda(s_{m-1}^*+s)}}-\frac{2\lambda s_{m-1}^*+\lambda s}{2(s_{m-1}^*+s)\sqrt{\lambda(s_{m-1}^*+s)}}\\
=&\frac{4 s_{m-1}^*+4\lambda(s_{m-1}^*)^2-(3 s_{m-1}^*+s)\sqrt{\lambda( s_{m-1}^*+s)}}{4(s_{m-1}^*+s)^2}+\frac{1}{2\sqrt{2}}\frac{\lambda}{\sqrt{\lambda( s_{m-1}^*+s)}}.
\end{align*}
To show that $K_m'(s)\geq 0$ for $s\in[0,s_{m-1}^*]$, it suffices to show the numerator of the first term is positive. Since $a^2>b^2$ implies $a>b$ when $a,b>0$, this follows from
\begin{align*}
&16(s_{m-1}^*)^2+16\lambda^2(s_{m-1}^*)^4+32\lambda(s_{m-1}^*)^3-\left(\lambda(s_{m-1}^*+s)(9(s_{m-1}^*)^2+s^2+6s_{m-1}^* s)\right)\\
&\geq 16(s_{m-1}^*)^2+16\lambda^2(s_{m-1}^*)^4+32\lambda(s_{m-1}^*)^3-2\lambda s_{m-1}^*(16 s_{m-1}^*)\\
&\geq 16( s_{m-1}^*)^2+16\lambda^2(s_{m-1}^*)^4\\
&\geq 0.
\end{align*}
 Hence, $K_m$ is increasing on $[0,s_{m-1}^*]$. When combined with \eqref{Hprime-upbnd}, this implies
\begin{align*}
H'_m(\tilde{s}_m^*)\leq K_m(\tilde{s}_m^*)\leq K_m(s_{m-1}^*)
&=-\frac{1+\lambda s_{m-1}^*}{2}+\left(1+\frac{1}{\sqrt{2}}\right)\sqrt{\lambda s_{m-1}^*}-\frac{\sqrt{\lambda s_{m-1}^*}}{\sqrt{2}}\\
&=-\frac{1}{2}\left(1+\lambda s_{m-1}^*-2\sqrt{\lambda s_{m-1}^*}\right)\\
&=-\frac{1}{2}(1-\sqrt{\lambda s_{m-1}^*})^2\\
&<0.
\end{align*}
Hence using the definition of $H_m$ in \eqref{def-Gl},  continuous differentiability  of $F_m$ from Proposition \ref{prop-reduction} and substituting $b=s_{m-1}^*$ and $f=H_m$ in Lemma \ref{uniquefixed}, we conclude that $F_m$ has a unique fixed point in $[0, s_{m-1}^*]$.
\end{proof}

\end{appendices}
\bibliographystyle{acm}
\bibliography{loadbal}

\end{document}